\documentclass[10pt]{article}
\usepackage{arxiv}
\usepackage{tabu}       % professional-quality tables

\usepackage{nicefrac}       % compact symbols for 1/2, etc.
\usepackage{microtype}      % microtypography
\usepackage{lipsum}
\usepackage[utf8]{inputenc} % allow utf-8 input
\usepackage[T1]{fontenc}    % use 8-bit T1 fonts

\usepackage{graphicx}  
\usepackage{amsmath}  
\usepackage{hyperref}
\usepackage{multirow}
\usepackage{url}
\usepackage[all]{xy}
\usepackage{microtype}
\usepackage{graphicx}
\usepackage{subcaption}
\usepackage{ upgreek }
\usepackage{booktabs} % for professional tables
\usepackage{ dsfont }
\usepackage{amsfonts}       % blackboard math symbols
\usepackage{ amssymb }

\usepackage{amsthm}
\usepackage{algorithm}
\usepackage{algorithmic}
\usepackage{enumerate}
\usepackage{forloop}
\usepackage{comment}
%Usual natbib:
%\usepackage{natbib}
% For arxiv:
\usepackage[numbers]{natbib}
\usepackage{float}
\usepackage{tabularx}
\usepackage[nottoc]{tocbibind}
\usepackage{ latexsym }
\usepackage{pst-node}
\usepackage{amssymb,amsthm}
\usepackage{tikz-cd}
\usepackage{mathabx}
\usepackage{esvect}
\usepackage{tikz}
\usepackage{paralist}
\usetikzlibrary{arrows}
\usepackage{nomencl} 

\usepackage{ bbold }
\makenomenclature

\newcommand{\overbar}{\overline}

\newcommand{\R}{\mathbb{R}}
\newcommand{\N}{\mathbb{N}}
\newcommand{\field}{\mathds{k}}
\newcommand\SComplex{K}

\newcommand{\Filt}{\mathrm{Filt}}

\newcommand\End{\mathrm{End} (\I, \leq)}
\newcommand\Aut{\mathrm{Aut}(\I, \leq)}
\newcommand\EndR{\mathrm{End} (\R, \leq)}
\newcommand\AutR{\mathrm{Aut}(\R, \leq)}

\newcommand{\IncHomSpace}{\Aut}
\newcommand{\NonDecMaps}{\End}
\newcommand{\IncHomSpaceR}{\AutR}
\newcommand{\NonDecMapsR}{\EndR}

\newcommand{\maxvalue}{\infty}

\newcommand\Id{\mathrm{Id}}

\newcommand\simplex{\sigma}

\newcommand\Polytope{\mathcal{P}}

\newcommand\SymK{\mathbb{G}(\SComplex)}
 
\newcommand\Barc{\mathrm{Bar}}
\newcommand\persmap{\mathrm{PH}}

\newcommand\StratumD{\mathcal{B}}
\newcommand\StratumF{\mathcal{S}}

\newcommand\Holink{\mathrm{Holink}}
\newcommand\ChartBarc{\nu}
\newcommand\ChartFilt{\mu}
\newcommand\Cone{\mathrm{C}}
\newcommand\pt{\mathrm{*}}

\newcommand\Dzeroleft{\StratumD_0^1} 
\newcommand\Dzeromid{\StratumD_0^2}
\newcommand\Dzeroright{\StratumD_0^3}

\newcommand\Donemid{\StratumD_1^2}

\newcommand\Dtwo{\StratumD_2^1}
\newcommand\Dthreeleft{\StratumD_3^1}
\newcommand\Dthreemid{\StratumD_3^2}

\newcommand\Dfourleft{\StratumD_4^1}
\newcommand\Dfourright{\StratumD_4^2}
\newcommand\Dfive{\StratumD_5}

\newcommand\Btwo{D_2}
\newcommand\Bthreeleft{D_3^1}
\newcommand\Bthreemid{D_3^2}

\newcommand\Bfourleft{D_4^1}

\newcommand\rb{\partial}

\newcommand\StandSimplex{\Delta}

\newcommand\PolyComplex{\Pi}
\newcommand\Monodromy{\mathcal{L}}
\newcommand\Path{\mathbf{Path}}
\newcommand\TopSpace{{X}}
\newcommand\TopSpaceBis{{Y}}

\newcommand\PolyMap{\mathbf{\Phi}}

\newcommand\Ent{\mathbf{Ent}}
\newcommand{\Vset}{\mathrm{V}}
\newcommand\Low{\mathrm{Low}}
\newcommand\FirstSimplexMobius{S}
\newcommand\SecondSimplexMobius{\bar{S}}

\newcommand\CatBarc{\Barc_\SComplex}
\newcommand\CategoryBarc{\mathbf{Bar_\SComplex}}
\newcommand\IncMaps{\CategoryBarc}

\newcommand\SCompCat{\mathbf{SCpx}}
\newcommand\PersCat{\mathbf{Pers}}
\newcommand\Vect{\mathbf{Vect}_{\field}}

\newcommand\I{\text{I}}

\newcommand\homleft{\alpha}
\newcommand\homright{\beta}
\newcommand\phinail{\Phi}

\newcommand\PolytopeOpen{\persmap_{|\StratumF}^{-1}(D)}
\newcommand\PolytopeClosed{\overbar{\persmap_{|\StratumF}^{-1}}(D)}
\newcommand\phipre{\phi_{\mathrm{pre}}}
\newcommand\phipost{\phi_{\mathrm{post}}}

%%%%%%%%%% Failed attempt to have versatile notations %%%%%%%%
%\newcommand\hmmax{0}
%\newcommand\bmmax{0} % default 4
%\usepackage{bm} 

%\usepackage{mathrsfs}
%\usepackage{xparse}
%\usepackage{xifthen}
%\usepackage[sc]{mathpazo}\renewcommand{\mathbf}{\mathbold}
%\setcounter{tocdepth}{2}
%\usepackage{eucal}
%\usepackage{physics}

%\NewDocumentCommand{\OrdPH}{O{}O{}}{%
% \ensuremath{\ifthenelse{\isempty{#2}}{\mathsf{PH}}{\mathsf{OrdPH}^{#2}}%\ifthenelse{\isempty{#1}}{}{\qty(#1)}}
%}
%%%%%%%%%%%%%%%%%%%%%%%%%%%%%%%%%%%%%%%%%%%%%

% Keywords command
\providecommand{\keywords}[1]
{
  \small	
  \textbf{Keywords:} #1
}

\makeatletter
\newcommand{\rmathbb}[1]{{\mathpalette\dude@rmathbb{#1}}}
\newcommand{\dude@rmathbb}[2]{%
  \scalebox{-1}[1]{$\m@th#1\mathbb{#2}$}%
  }
\makeatother

\theoremstyle{plain}
\newtheorem{theorem}{Theorem}[section]
\newtheorem{lemma}[theorem]{Lemma}
\newtheorem{proposition}[theorem]{Proposition}
\newtheorem{corollary}[theorem]{Corollary}

\newtheorem*{theorem*}{Theorem}
\newtheorem*{proposition*}{Proposition}

\theoremstyle{definition}
\newtheorem{remark}[theorem]{Remark}

\newtheorem{definition}[theorem]{Definition}
\newtheorem{example}[theorem]{Example}

%%%% added by UT %%%%
\definecolor{dred}{rgb}{0.7,0.0,0.0}

\newcommand\UT[1]{\textcolor{blue}{#1}}

%%%%%%%%%%%%%%%%%%%%%

\begin{document}

	\title{The Fiber of Persistent Homology for simplicial complexes}

	\author{
  Jacob Leygonie%\thanks{Corresponding author.}
  \\
  Mathematical Institute\\
  University of Oxford\\
  Oxford OX2 6GG, UK \\
  \texttt{jacob.leygonie@maths.ox.ac.uk} \\
  \And
  Ulrike Tillmann \\
   Mathematical Institute \\
University of Oxford\\
  Oxford OX2 6GG, UK \\
   \texttt{tillmann@maths.ox.ac.uk} \\
}
	
\maketitle

\begin{abstract}
We study the inverse problem for persistent homology: For a fixed simplicial complex $\SComplex$, we analyse the fiber of the  continuous map 
$\persmap $ on the space of filters  %$\Filt_\SComplex $ 
that assigns to a filter $f: \SComplex \to \mathbb R$ the total barcode of its associated sublevel set filtration of $\SComplex$. We find that $\persmap$ is best understood as a map of stratified spaces. Over each stratum of the barcode space %$\Barc _\SComplex:= \persmap (\Filt_\SComplex)$, 
the map~$\persmap$ restricts to a (trivial) fiber bundle with fiber a polyhedral complex. Amongst other we derive a bound for the dimension of the fiber depending on the number of distinct endpoints in the barcode. Furthermore, taking the inverse image $\persmap^{-1}$ can be extended to a monodromy functor on the (entrance path) category of barcodes. We demonstrate our theory on the example of the simplicial triangle giving a complete description of all fibers and monodromy maps. This example is rich enough to have a M\"obius band as one of its fibers.
\end{abstract}

\keywords{Persistent Homology, Inverse Problems, Stratifications, Polyhedral Geometry, Entrance Path Category}

\setcounter{tocdepth}{2}
\tableofcontents

\newpage

\section*{Introduction}
\label{section_introduction}
Topological Data Analysis (TDA) is a rapidly expanding, new area  \cite{carlsson2009theory,edelsbrunner2010computational, oudot2015persistence} which has been applied to a large variety of data science problems. Its best-known tool, persistent homology, provides a non-linear dimension reduction method which is computable \cite{otter2017roadmap,zomorodian2005computing} and robust with respect to small perturbations of the underlying data \cite{cohen2007stability}. A growing number of vectorisation methods \cite{adams2017persistence,bubenik2015statistical} enable statistical studies of the outcome of persistent homology and combining it with machine learning methods. 

It is thus natural to ask how much information can be recovered from  persistent homology: Given a particular  instance of a persistence module, what can we say about the data set it has been derived from?  Any qualitative or quantitative understanding of the  information loss would be of great value for future applications, and several approaches to variations of this question have recently appeared \cite{curry2018fiber,curry2018many, cyranka2018contractibility,  turner2014persistent}. We refer to \cite{oudot2018inverse} for a survey of inverse problems for persistent homology. 

In this paper we analyse this foundational problem in a  general form:  For a fixed simplicial complex~$\SComplex$, we study the persistent homology map~$\persmap$ (and its inverse) from the real-valued functions on~$\SComplex$ to the space of barcodes. 
We are naturally led to study~$\persmap$ as a map of stratified spaces and to extend~$\persmap ^{-1}$ to a functor defined on a natural category of barcodes. This rich structure we expect may  also be  of interest outside the data science community.
Indeed, persistent homology has attracted much recent interest from other branches of mathematics, in particular symplectic topology, stemming from its connection to Morse theory and its close cousin Floer homology~\cite{Barannikov94, PolterovichShelukhin16, UsherZhang16}, and may yet find uses in other areas, see for example~\cite{ManinMarcolli20}.
%is justified by the interest in persistent homology also from pure mathematics stemming from its close connection to Morse theory and its close cousin, Floer homology; see
%\cite{Barannikov94, PolterovichShelukhin16, UsherZhang16, ManinMarcolli20}.

%We illustrate our results on the example of a triangle which is surprisingly rich. 
\subsection*{Content and results}
Given a fixed finite simplical complex~$\SComplex$, let~$\Filt_\SComplex$ be the space of its filters. By definition these are functions~$f: \SComplex \to \I =[0,1] \subset \R$ 
that are monotonic with respect to face inclusions, 
\[
\simplex\subseteq \simplex'  \Rightarrow f(\simplex)\leqslant f(\simplex')   \quad \quad \text{ for all } \simplex, \simplex' \in \SComplex.
\]
Thus each sublevel set~$f^{-1} ((-\infty, t])$ defines a simplicial subcomplex of~$\SComplex$ and  every~$f$ gives rise to a filtration.  Persistent homology then defines a continuous map
\[
\persmap : \Filt_\SComplex \longrightarrow \Barc^\infty,
\]
where~$\Barc^\infty$ is the space of total barcodes and~$\persmap$ assigns the union of barcodes in all homological degrees. As~$\SComplex$ is fixed we restrict our attention to the image
\[
\CatBarc := \persmap (\Filt_\SComplex).
\]
In this notation, to understand the information loss of persistent homology is to understand the fiber~$\persmap^{-1} (D)$ at a barcode~$D \in \CatBarc$. This naturally leads us to a closer analysis of the spaces~$\Filt_\SComplex$ and~$\CatBarc$ themselves. We will endow them with monoid actions and stratifications, and show that~$\persmap$ is compatible with these extra structures.

Let~$\End$ be the monoid of order preserving continuous maps of the unit interval~$\I$ that fix the endpoints, and let~$\Aut$ be its subgroup of homeomorphisms. 
The monoid~$\End$ and hence~$\Aut$ act continuously on~$\Filt_\SComplex$ by post-composition and on~$\Barc_\SComplex$ by moving the endpoints of the bars. As the endpoints of the bars in~$\persmap(f)$ are a subset of the values of~$f$, the map~$\persmap $ is readily seen to be equivariant with respect to these actions (Lemma~\ref{lemma_commutes_PH_homeo}).

For our further analysis it is important that both~$\Filt_\SComplex$ and~$\CatBarc$ have a natural stratification where each $i$-dimensional stratum is identified with an open simplex 
\[
\mathring{\StandSimplex}^i= \big \{ (x_1,\cdots,x_i) \, |   \, \, 0< x_1 < \dots < x_i <1  \big \}
\]
such that the coordinates are given by  the distinct values in the image of~$f$, and respectively, the distinct endpoints of the bars in~$D$. We identify each such stratum as an $\Aut$-orbit, and thus~$\persmap$, by its equivariance, is a strongly stratified map taking a stratum of filters surjectively onto a stratum of barcodes (Proposition~\ref{proposition_PH_sends_strata_to_strata}). In particular,  the inverse image~$\persmap^{-1} (\StratumD)$ of any barcode stratum~$\StratumD$ is a finite union of filter strata. We then show that~$\persmap$ over a stratum~$\StratumD$ is a fiber bundle with fiber a polyhedral complex (Theorem~\ref{theorem_fiber_bundle_polyhedral}), and derive some general properties of this fiber. Thus, for example, we show that the dimension of the fiber over a barcode~$D$ is bounded by half the difference between the number~$\sharp \SComplex$  of simplices and the number~$\sharp  D$ of endpoints  in the barcode (Proposition~\ref{proposition_improved_bounding_dimension_fiber}):
\[\dim \persmap^{-1}(D) \leqslant \frac{\sharp \SComplex-\sharp D}{2}.\]
Unlike~$\Filt_\SComplex$,~$\CatBarc$ is not (the realisation of) a simplical complex: On the boundary of a barcode stratum, viewed as an open simplex, the $1$-dimensional subspaces corresponding to bars~$(x_j, x_j)$ of length zero are collapsed. %if~$x_j$ is not also the endpoint of another bar with positive length (Example~\ref{example_line_complex}).  
Nevertheless, we are able to describe the attaching (or monodromy) maps of the fiber over~$\StratumD$ to the fiber of a lower dimensional stratum~$\StratumD'\subseteq \bar{\StratumD}$ in
its closure. These attaching maps are homotopic to maps of polyhedra but are not in general homotopic to each other (Proposition~\ref{proposition_monodromy}). We find that this structure is most naturally described in terms of the category~$\CategoryBarc$: Its objects are the barcodes in~$\CatBarc$ and its space of morphisms from~$D$ to~$D'$ is the subspace of maps~$\phi \in \End $ that send the endpoints of the bars in~$D$ surjectively to those of~$D'$. Each of these morphism spaces is discrete up to homotopy (Theorem~\ref{theorem_barcode_category}). Taking the inverse image then extends to a functor from~$\CategoryBarc$ to the category of topological spaces and continuous maps
\[
\persmap^{-1} : \CategoryBarc \longrightarrow \mathbf {Top}
\]
 taking a morphism~$\phi \in \CategoryBarc (D, D')$ to the continuous map~$\Monodromy_\phi: f \in \persmap ^{-1} (D) \longmapsto \phi \circ f \in \persmap ^{-1} (D')$, which is indeed well-defined by the equivariance of~$\persmap$ under the action of~$\End$.

The category~$\CategoryBarc$, which we were naturally led to consider, is closely related to the entrance path category~$\Ent(\CatBarc)$ of the space~$\CatBarc$ which we prove to be homotopically stratified (Proposition~\ref{proposition_barcodes_homotopically_stratified}) in the sense of Quinn
\cite{quinn1988homotopically}.
%(with locally path-connected and locally simply-connected strata). 
Indeed, we show that descending to the homotopy category, i.e. replacing morphism spaces with the set of their connected components, induces an isomorphism of categories (Proposition~\ref{proposition_entrance_path_category_equal_catbarc})
\[
h\CategoryBarc \simeq \Ent(\CatBarc).
\]
Recall %from \cite{woolf2008fundamental} 
that the entrance path category is the analogue for stratified spaces of the fundamental groupoid, and that functors from the entrance category to the category of sets are in correspondence with branched covers. Taking this analogy one step further by replacing the category of sets with the homotopy category of spaces~$h\mathbf{Top}$, we may most naturally think of~$\persmap$ as a stratified fiber bundle with polyhedral fibers:
\[
\persmap^{-1} : \Ent(\Barc  _\SComplex) \longrightarrow h\mathbf {Top}.
\]
%, and the induced composite functor
%$$
%\Ent(\CatBarc) \overset {\persmap ^{-1} }\longrightarrow h\mathbf {Top} \overset {\pi_0} \longrightarrow 
%\mathbf {Sets}
% $$
%as a branched covering; compare \cite{woolf2008fundamental}. Here $\pi_0$ is the functor that takes a topological space  to its set of path-connected components.

%Finally we remark in section~\ref{section_unbounded_situation} that there is no loss of generality by restricting to filters that take value in the bounded interval~$\I \subset \mathbb R$. We also note in section~\ref{section_lower_stars} that a similar analysis as above holds if instead of all filter functions we consider only lower star filters. Indeed, the space~$\Low_\SComplex$ of lower star filters is a union of strata in~$\Filt_\SComplex$. Thus the fiber of~$\persmap$ restricted to the lower star filters is again a polyhedral complex.  

Finally in section 5 we consider variants of our fiber problem and interactions with the symmetries of the underlying simplicial complex. Thus, in section 5.1 we consider the case where filters and barcodes are allowed to take values in the real line~$\R$ instead of~$\I$. 
The results on the fiber~$\persmap^{-1}(D)$ adapt to this situation, with the only difference that the polyhedra in the fiber may now be unbounded. However, we illustrate with examples that the topology of~$\CatBarc$ is more complicated in the unbounded situation: we show that the bottleneck topology on barcodes does not in general agree with the easy to understand quotient topology, unlike in the bounded case (Proposition
~\ref{prop_quotient_topology}). This is in general no longer true in the unbounded situation. In addition, in section~\ref{section_lower_stars} we show that the overall analysis holds if instead of all filters we consider only lower star filters. Indeed, the space~$\Low_\SComplex$ of lower star filters is a union of strata in~$\Filt_\SComplex$. Thus the fiber of~$\persmap$ restricted to the lower star filters is again a polyhedral complex.  

In Appendix~\ref{section_triangle}, illustrating our theory, we describe in complete detail the case when~$\SComplex$ is a triangle (with~$6$ simplices). For each of the~$34$ barcode strata in~$\CatBarc$ we describe the fiber with the action of the symmetry group of~$\SComplex$ and their monodromy maps. While most fibers consist of a set of discrete points, three fibers are homeomorphic to a circle, one is homeomorphic to two copies of the circle, and one is homeomorphic to the M\"obius band.
As far as we are aware this is the first non-contractible simplicial complex for which the fibers of~$\persmap$ have been studied and also the first example where the fibers are not homotopy discrete.

\subsection*{Related work}
Our set-up here is most closely related to that in  the work of Cyranka, Mischaikow and Weibel~\cite{cyranka2018contractibility} where the authors consider lower star filters on the $n$-fold subdivided interval and show that the fibers of~$\persmap$ are homotopy discrete. 

Previously, Curry in \cite{curry2018fiber} considers the interval with the set of continuous maps. In particular he  bounds the connected components of the fiber in terms of the nestings of the intervals in the barcode. Curry with coauthors also
%Catanzaro, Fasy, Lazovski, Malen, Riess,  Wang  and Zabka 
studies the higher dimensional example of a sphere in \cite{catanzaro2020moduli} with the set of functions that arise as compositions of an embedding of~$\mathbb S^2$ into~$\R^3$ followed by a projection onto the last coordinate. 
%Here we add to this list the triangle. 

In general, the persistent homology associated to a single filter cannot determine the underlying simplicial complex or its homotopy type, no more than homology can determine the homotopy type of the underlying space. However, under some  conditions a family of such functions might suffice. To understand this question Turner, Mukherjee and Boyer introduced the persistent homology transform (PHT) \cite{turner2014persistent}, and proved that indeed under certain circumstances PHT is injective on shapes embedded in~$\R^3$, see also~\cite{curry2016classification,ghrist2018persistent} for a generalisation to higher dimensions. It has even been possible to find algorithmically a left inverse for PHT for some specific classes of sets~\cite{belton2020reconstructing, betthauser2018topological,fasy2019persistence, micka2020searching}. 

There are other persistence based invariants of spaces. One such  (stable and computable) invariant  for metric graphs has been proposed by Dey, Shi, and Wang  \cite{bey2015}. In~\cite{oudot2017barcode} Oudot and Solomon show that the fiber of this intrinsic  transform is generically globally and always locally injective. 

Since this paper was submitted, several other properties of the fiber~$\persmap ^{-1}(D)$ have been studied.
Motivated by our analysis here, in~\cite{LeygonieHenselman} the first author with Gregory Henselman-Petrusek provides an algorithm and software for the computation of the fiber~$\persmap ^{-1}(D)$. This allows for many more examples to be computed explicitly. Furthermore, with David Beers he analyzes in~\cite{leygonie2021fiber} the fiber of~$\persmap$ for Morse functions on any smooth compact manifold with boundary~$\mathcal{M}$ and shows that each path connected component in the fiber equals the orbit of one of its Morse functions under the action of isotopies of~$\mathcal{M}$. This enables in particular the computation of the homotopy type of each path connected component in the fiber for almost all surfaces. %To the best of our knowledge, these are all the examples where the fibers of  persistent homology have been analysed.
Meanwhile,  the second and third author of \cite{cyranka2018contractibility} have extended their analysis in ~\cite{mischaikow2021persistent}  to the $n$-fold subdivided circle where the fibers are shown to be homotopy equivalent to~$\mathbb S^1$.

\subsection*{Acknowledgements}
We are indebted to the reviewer of the Journal of Pure and Applied Algebra, where this manuscript was accepted, for his many valuable insights. We also wish to thank Heather Harrington for her interest in this project, and acknowledge the support of the Centre for Topological Data Analysis, EPSRC grant EP/R018472/1. 

\newpage
%%%%%%%%%%%%%%%%%%%%%%%%%%%%%%%%%%%%%%%%%%%%%%%%%%%%%
%%%%%%%%%%%%%%%%%%%%%%%%%%%%%%%%%%%%%%%%%%%%%%%%%%%%%

\section{Stratifications of the spaces of filters and barcodes}
We first recall some background theory 
%in section~\ref{section_background} 
and define the persistence map~$\persmap$, the fibers of which are the object of interest. In section~\ref{section_equivariance}, we introduce the topological monoid~$\NonDecMaps$ of non-decreasing maps of the interval, which provides essential structure: it acts continuously on the spaces of filters and barcodes, and  the persistence map~$\persmap$ is equivariant with respect to this action. In section~\ref{sec:stratifications_barcodes_filters}, we show furthermore that the orbits of the subgroup of homeomorphisms,~$\IncHomSpace$,  provide stratifications for the space of filters and the space of barcodes and that, due to its equivariance,~$\persmap$ is a strongly stratified map between them. It follows now easily that the fibers over barcodes from the same  stratum are pairwise homeomorphic, thus turning the identification of the fiber into a finite problem. In section~1.4 we also show  that the image~$\Barc_\SComplex$ of the space of filters under~$\persmap$ has the quotient topology.
\subsection{The definition of the persistence map}
\label{section_background}
Let~$\SCompCat$ denote the category of finite (abstract) simplicial complexes and inclusions, and let~$\SComplex \in\SCompCat{}$ be an arbitrary, non-empty  simplicial complex of dimension~$d\in \mathbb{N}$, which is fixed throughout the paper. We consider~$\SComplex$ as a subset of the power set on its vertices. Recall, if~$\sigma \in \SComplex$ then all its non-empty subsets~$\sigma' \subset \sigma$, i.e. its {\em faces}, are also in~$\SComplex$. We write~$\sharp \SComplex$ for the total number of simplices in the complex~$\SComplex$.

We denote by~$\I:=[0,1]$ the closed unit interval. A typical function on~$\SComplex$ valued in~$\I$ is denoted by~$f\in\I^\SComplex$.

\begin{definition}
A {filter function}, or {\em filter} for short, on~$\SComplex$ is a map~$f: \SComplex \to \I$ that is monotonic with respect to face inclusions: For all
simplices~$\sigma', \sigma \in K$
\[
\simplex'\subseteq \simplex  \Rightarrow f(\simplex')\leqslant f(\simplex).
\]
The set of all filters on~$\SComplex$ is denoted by~$\Filt_\SComplex$. 
%We are interested in the subspace of {\em filters}, $\Filt_\SComplex\subseteq \I^\SComplex$, of functions~$f$ valued in~$\I$ that are monotonic w.r.t. face inclusions: 
%
%\[
%\simplex\subseteq \simplex' \in \SComplex \Rightarrow f(\simplex)\leqslant f(\simplex').
%\]
%
\end{definition}
The monotonicity condition on filters is equivalent to the property that their sublevel sets are simplicial subcomplexes of~$\SComplex$. 
Thus a filter~$f$ gives rise to a filtration $ \SComplex(f) = \{f^{-1}((-\infty, t]) \}_{t \in \R}$, of~$\SComplex$ which we may think of  as a functor from~$\R$ (as an ordered set) to the category of simplicial complexes
\[
\SComplex(f) : (\R, \leq) \longrightarrow \SCompCat.
\]
We can then compose this with the functor~$\mathrm{H}_p$ which takes a simplicial complex to its $p$-th simplicial homology with coefficients in a fixed field~$\field$.
%here $0 \leqslant p\leqslant d$. 
This defines the {\em $p$th persistent homology functor}
\[
\mathrm{H}_p( K(f) ) : (\R, \leq ) \longrightarrow
\Vect,
\]
which is an instance of a {one-parameter, pointwise finite dimensional, finite persistence module}, or {\em persistence module} for short. 
We denote by~$\PersCat$ the category of such persistence modules and natural transformations between them.
%
%Let $\overbar{\I}:=\I \sqcup \{\infty\}$, and $\HalfSpace:=\{(b,d)\in \I\times \overbar{\I},  b<d \}\subseteq \I\times \overbar{\I} $ be the open, extended, left upper-half square above the diagonal. 

Given an interval~$J\subseteq \R$, the associated {\em interval module}~$\mathbb{I}_{J}\in \PersCat$ has copies of the field~$\field$ over~$J$ and zero  elsewhere, the copies of~$\field$ being connected by identity maps.
Given a persistence module~$\mathbb{V}\in \PersCat$, by the Decomposition Theorem~\cite{crawley2015decomposition}, there exists a unique finite multiset~$\mathcal{J}$ of intervals such that we have an isomorphism:
\[
\mathbb{V}\cong \bigoplus_{J\in \mathcal{J}} \mathbb{I}_{J}.
\]
The finite multiset~$\Barc(\mathbb{V})$ of pairs~$(\inf J, \sup J)\in (\R\sqcup\{-\infty\})\times (\R\sqcup\{\infty\})$ for intervals~$J\in \mathcal{J}$ appearing in the above decomposition is the so-called {\em barcode} of the module~$\mathbb{V}$.  If~$\mathbb{V}=\mathrm{H}_p(K(f))$ is a persistent homology module, then the intervals that occur are all half-open intervals of the form~$J =[b, d)$ with restricted values %~$\Barc(\mathbb{V})$ consists of pairs 
$(b,d)\in \I\times (\I \sqcup\{\infty\})$. Consequently, we formally define barcodes as follows. 

\begin{definition}
\label{definition_barcode}
A barcode~$D$ is a finite multi-set of pairs~$(b,d)$ in~$\I\times (\I\sqcup\{\infty\})$, with~$b<d$, called the {\em intervals} or {\em bars} of~$D$. An interval~$(b,d)$ is {\em bounded} (resp. {\em infinite}) if~$d<\infty$  (resp.~$d=\infty$). The multiplicity of an interval~$(b,d)\in D$ is denoted by~$D(b,d)\in \N$. The set of all barcodes is denoted by~$\Barc$. 
\end{definition}
We can now define the {\em degree $p$ persistence map} as the composition
\[
\persmap_p := \Barc( \mathrm{H}_p  \circ K (.)): \Filt_\SComplex \longrightarrow \Barc
\]
and the (total) {\em persistence map} as the product
\[
\persmap := (\persmap_0, \dots , \persmap_d): \Filt_\SComplex \longrightarrow \Barc ^{d+1}.
\]
We will refer to elements
$D = (D_0 , \dots, D_d) \in \Barc^{d+1}$ simply as barcodes. We will mainly be interested  in barcodes in the image of~$\persmap$:
\[\CatBarc:=\persmap(\Filt_\SComplex)\subseteq\Barc^{d+1}.\]
The set of filters~$\Filt_\SComplex $ is naturally topologised as a subset of the  finite dimensional Euclidean space~$\R ^K$. 

The standard topology on~$\Barc$, and hence on~$\CatBarc$, is induced by an (extended) metric which we now recall.
A {\em matching}~$\gamma$ between two barcodes~$D,D'\in \Barc$ is a partial injective map~$\gamma$ from intervals of~$D$ to those of~$D'$. The {\em cost}~$c(\gamma)$ of a matching is the maximum of the  following three quantities: (i) the maximum $\|(b,d)-\gamma(b,d)\|_\infty$ over intervals~$(b,d)\in D$ where~$\gamma$ is defined, (ii) the maximal length~$\frac{d-b}{2}$ over intervals~$(b,d)\in D$ where~$\gamma$ is not defined, and (iii) the maximal length~$\frac{d'-b'}{2}$ over intervals~$(b',d')\in D'$ that are not in the image of~$\gamma$. Here we allow~$\infty$ as a possible value for~$d$,~$d'$, and hence also for the  maxima; and~$\| . \| _\infty$ denotes the (extended) supremum norm on~$\R \times (\R \cup \{ \infty\})$.
The \textit{bottleneck distance},~$d_b$, between~$D$ and~$D'$ is then defined as:  
\[d_b(D,D'):= \inf_{\gamma \text{ matching}} c(\gamma). \]
Since our barcodes are finite multisets,~$d_b$ defines a true (extended) metric on~$\Barc$. We endow~$\Barc$ with the induced \textit{bottleneck topology}. 

We thus have the following instance of the Stability Theorem~\cite{bl-indaspb-15, chazal2016structure, cohen2007stability} in our context.

\begin{theorem}
\label{theorem_stability}
The {degree $p$ persistence map}~$\persmap_p$ is Lipschitz continuous, and thus so is the {persistence map}~$\persmap$.
\end{theorem}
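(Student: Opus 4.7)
The plan is to deduce this directly from the classical Isometry/Stability Theorem cited in \cite{bl-indaspb-15, chazal2016structure, cohen2007stability}, which equates the bottleneck distance of barcodes with the interleaving distance of persistence modules, provided the modules are pointwise finite dimensional. Since $\SComplex$ is finite, every persistent homology module $\mathrm{H}_p(\SComplex(f))$ is pointwise finite dimensional, so the cited theorem applies in our setting. Thus the task reduces to producing an $\varepsilon$-interleaving between $\mathrm{H}_p(\SComplex(f))$ and $\mathrm{H}_p(\SComplex(g))$ whenever $\varepsilon := \|f - g\|_\infty$.

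First I would take arbitrary $f, g \in \Filt_\SComplex$ and note that, pointwise on simplices, $|f(\simplex) - g(\simplex)| \leq \varepsilon$ implies the chain of subcomplex inclusions
\[
f^{-1}((-\infty, t]) \subseteq g^{-1}((-\infty, t+\varepsilon]) \subseteq f^{-1}((-\infty, t+2\varepsilon])
\]
for every $t \in \R$, and symmetrically with the roles of $f$ and $g$ swapped. These inclusions are natural in $t$ and, composed with the simplicial homology functor $\mathrm{H}_p$, assemble into the two families of morphisms of shift $\varepsilon$ whose composites recover the internal structure maps of the modules. This is exactly the data of an $\varepsilon$-interleaving between $\mathrm{H}_p(\SComplex(f))$ and $\mathrm{H}_p(\SComplex(g))$.

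Applying the Isometry Theorem to this interleaving yields
\[
d_b\bigl(\persmap_p(f), \persmap_p(g)\bigr) \leq \varepsilon = \|f-g\|_\infty,
\]
so $\persmap_p$ is $1$-Lipschitz. Endowing $\Barc^{d+1}$ with the maximum of the coordinate-wise bottleneck distances, the product map $\persmap = (\persmap_0, \dots, \persmap_d)$ inherits the same Lipschitz constant; with a sum metric one only pays an overall factor of $d+1$. The only real subtlety is a bookkeeping one, namely checking that our conventions for barcodes (half-open intervals with endpoints in $\I \times (\I \sqcup \{\infty\})$, matchings with the cost function defined via $(b,d) \mapsto \frac{d-b}{2}$ for unmatched bars, and $\|\cdot\|_\infty$ on $\R \times (\R \cup \{\infty\})$) match those used in the cited isometry statements; since the latter are formulated precisely for finite pointwise finite dimensional modules with possibly infinite bars, this is a routine translation rather than a substantive obstacle.
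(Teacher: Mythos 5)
Your proposal is correct and follows exactly the route the paper intends: the paper does not give its own proof but simply cites the Stability/Isometry Theorem, and your argument reconstructs the standard sublevel-set-inclusion interleaving plus the isometry theorem that underlies those citations. Your check that $f^{-1}((-\infty,t]) \subseteq g^{-1}((-\infty,t+\varepsilon])$ whenever $\|f-g\|_\infty \le \varepsilon$, and the resulting $\varepsilon$-interleaving of $\mathrm{H}_p(\SComplex(f))$ and $\mathrm{H}_p(\SComplex(g))$, is the right instantiation of the cited result in this finite simplicial setting.
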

For later reference, we record the following elementary fact.
\begin{proposition}
\label{proposition_bottleneck_ball}
Let~$D\in \Barc^{d+1}$ be a non-empty barcode. Then for small enough~$\epsilon$, another barcode~$D'$ is $\epsilon$-close to~$D$ if and only if its intervals satisfy the following:
\begin{itemize}
\item for each integer~$0\leqslant p \leqslant d$ and interval~$(b,d)\in D_p$, the intervals~$(b',d')$ in~$D'_p$ satisfying $\|(b',d')-(b,d)\|_{\infty}< \epsilon$ have multiplicities summing up to~$D_p(b,d)$, the multiplicity of~$(b,d)$ in~$D_p$;
\item the other intervals~$(b',d')\in D'$, that is those that are not~$\epsilon$-close to intervals in$~D$, are $\epsilon$-{\em small}, i.e.~$|d'-b'|<\epsilon$.
\end{itemize}
\end{proposition}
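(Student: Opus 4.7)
The plan is to choose a threshold $\eta>0$ depending only on $D$ and then verify both directions of the equivalence for any $\epsilon < \eta/6$; the precise numerical constant will not matter. Concretely I would take $\eta$ to be the minimum, over all pairs of distinct (finite) endpoints appearing in some bar of $D$, of their absolute difference, together with all finite lengths $d-b$ of bounded bars $(b,d) \in D$. Since $D$ is a finite multiset, $\eta>0$. The essential point is that at scale $\epsilon \ll \eta$, endpoints of distinct bars in $D$ cannot be confused, and bounded bars in $D$ are separated from arbitrarily short bars.

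For the ``if'' direction, given $D'$ satisfying the two bullet points, I would define a matching $\gamma$ by sending, for each $p$ and each $(b,d)\in D_p$, the $D_p(b,d)$ copies of $(b,d)$ bijectively onto the (by hypothesis, equally many) bars of $D'_p$ lying within $\epsilon$ of $(b,d)$; the particular bijection is immaterial. This $\gamma$ is defined on all of $D$, each matched pair contributes at most $\epsilon$ to the cost~(i), no bar of $D$ is left over so~(ii) vanishes, and the unmatched bars of $D'$ have length less than $\epsilon$ and hence contribute less than $\epsilon/2$ to~(iii). Therefore $c(\gamma)\leqslant \epsilon$ and $d_b(D,D')\leqslant \epsilon$.

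For the ``only if'' direction, suppose $d_b(D,D')<\epsilon$ and pick a matching $\gamma$ with $c(\gamma)<\epsilon$. First, every bar of $D$ is matched by $\gamma$: an unmatched bounded $(b,d)\in D$ would contribute at least $\eta/2 > \epsilon$ to the cost, and an unmatched infinite bar would contribute $\infty$. Moreover, each $(b,d)\in D_p$ is matched to some $(b',d')\in D'_p$ within $\ell_\infty$-distance $\epsilon$. Secondly, for each $(b,d)\in D_p$, the bars of $D'_p$ lying within $\epsilon$ of $(b,d)$ are exactly the $\gamma$-images of the $D_p(b,d)$ copies of $(b,d)$: any such $(b',d')\in D'_p$ either is matched to some $(b'',d'')\in D_p$, in which case $(b'',d'')$ lies within $2\epsilon<\eta$ of $(b,d)$ and so must equal $(b,d)$; or is unmatched, in which case $d'-b'<2\epsilon$, contradicting the estimate $d'-b'>(d-b)-2\epsilon \geqslant \eta-2\epsilon > 2\epsilon$. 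This gives the claimed multiplicity equality. Finally, any bar of $D'$ not $\epsilon$-close to any bar of $D$ cannot be in the image of $\gamma$ by the argument just given, hence must be unmatched and therefore satisfies the length bound.

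The only real obstacle is cosmetic: the bottleneck cost charges $\tfrac{d-b}{2}$ rather than $d-b$ for an unmatched bar, so the length bound obtained in the last step of the ``only if'' direction is a priori $d'-b'<2\epsilon$ rather than $<\epsilon$. This is resolved by shrinking $\epsilon$ by a further factor of two at the outset. The handling of infinite bars requires no new ideas: an infinite bar can only be matched within finite cost to another infinite bar, so the reasoning above transfers verbatim once ``$\epsilon$-close'' for infinite bars is interpreted as sharing the $+\infty$ endpoint with birth times within $\epsilon$.
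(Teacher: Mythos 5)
Your argument follows the same approach as the paper's proof, which merely records the threshold $\alpha$ and leaves the verification implicit; you flesh out the details. The ``if'' direction and the bulk of the ``only if'' direction are correct, though you should state explicitly that the matching $\gamma$ in the ``if'' direction is well-defined: the sets of bars of $D'_p$ within $\epsilon$ of distinct bars of $D_p$ must be pairwise disjoint, which holds because $2\epsilon<\eta$ bounds the $\ell_\infty$-separation of distinct bars in $D$ from below.

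Your closing paragraph, however, contains a genuine error. Shrinking $\epsilon$ by a further factor of two at the outset does \emph{not} repair the factor-of-two mismatch, because the same $\epsilon$ appears on both sides of the biconditional; no choice of admissibility threshold $\epsilon_0$ turns the conclusion $d'-b'<2\epsilon$ into $d'-b'<\epsilon$. What you have in fact uncovered is a small imprecision in the proposition as literally stated, which the paper's own one-line proof glosses over: since the cost charges $\tfrac{d'-b'}{2}$ for an unmatched bar, the hypothesis $d_b(D,D')<\epsilon$ forces stray bars of $D'$ to satisfy only $|d'-b'|<2\epsilon$, and one can easily exhibit $D'$ with $d_b(D,D')<\epsilon$ carrying a stray bar of length $\tfrac{3\epsilon}{2}$ located far from every bar of $D$. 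The second bullet should therefore read $|d'-b'|<2\epsilon$. This constant is harmless for the proposition's downstream uses in the paper, but a careful proof should state the bound that the argument actually yields rather than claim the tighter bound can be recovered by adjusting~$\epsilon$.
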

\begin{proof}
Take~$\epsilon \leq\frac{\alpha}{2}$ where~$\alpha$ is the minimum of  (a) the lengths~$d-b$ of intervals~$(b,d)\in D$ and  (b) all pairwise  distances  $\|(b,d)-(\overbar{b},\overbar{d})\|_{\infty}$ for any two geometrically distinct intervals~$(b,d)$ and~$(\overbar{b},\overbar{d})$ in~$D$. Note that~$\alpha >0$ as  by definition all our barcodes are finite, i.e.~$D$ has finite support.
\end{proof}
%

%%%%%%%%%%%%%%%%%%%%%%%%%%%%%%%%%%%%%%%%%%%%%%%%%%%%%
%%%%%%%%%%%%%%%%%%%%%%%%%%%%%%%%%%%%%%%%%%%%%%%%%%%%%

\subsection{Actions on filters and barcodes, and 
equivariance of the persistence map}
\label{section_equivariance}

Let~$\IncHomSpace$ be the space of orientation preserving homeomorphisms of~$\I$, and~$\NonDecMaps$ be the space of continuous non-decreasing maps that fix the boundary points~$0$ and~$1$. We consider them as subspaces of the space of all continuous maps of~$\I$ to itself with the compact open (or equivalently~$||.||_\infty$-metric) topology. In this topology~$\NonDecMaps$ is the closure of~$\IncHomSpace$. For future reference we note that the straight line interpolation 
\[ \phi_t:=(1-t)\phi+t\phi'\]
between maps~$\phi,\phi'\in \NonDecMaps$ defines a continuous path in~$\NonDecMaps$.

Since the boundary points are fixed by elements in~$\IncHomSpace$ and~$\NonDecMaps$, they extend by the identity to automorphisms and endomorphisms  of the  real line~$\R$ and the extended real line~$\R \cup \{ \pm \infty \}$. When the context requires it, we will tacitly extend our maps without changing notation. 
%\footnote{It is clear that the closure of~$\IncHomSpace$ is contained in the set of continuous non-decreasing maps~$\phi$ fixing the endpoints. The converse inclusion follows by taking straight line homotopies from such~$\phi$ to the identity map of the unit interval. }

The monoid~$\NonDecMaps$ acts from the left on~$\Filt_\SComplex$ by post-composition: 
\[
\phi .f  := \phi \circ f.
\]
It also acts from the left on~$\Barc$, and hence diagonally on~$\Barc^{d+1}$, by applying~$\phi$ to all the endpoints of the bars in~$D$ with the convention that~$\phi(\infty) = \infty$ and bars of length zero are suppressed:
\begin{equation}
    \label{eq_definition_action_barcodes}
    \phi . D := \{ (\phi(b), \phi(d)) \, | \, (b,d) \in D  \text { and } \phi(b) \neq \phi (d)\}.
\end{equation}
Thus~$\phi.D$ contains (with multiplicities) an interval~$(\phi(b),\phi(d))$ for each interval~$(b,d)\in D$ as long as~$\phi(b)\neq \phi(d)$, and an interval~$(\phi(b),\infty)$ for each interval~$(b,\infty)\in D$. Note that~$\phi$ being non-decreasing does not imply that~$\phi(x) \geq x$ for all~$x$. In particular, through the action of~$\phi$, intervals can move to the left, to the right, be contracted or expanded.
%The action extends diagonally to an action on~$\Barc^{d+1}$.  

A key result used in this work is that the persistence map is  equivariant with respect to  these actions. 
\begin{lemma}[Equivariance]
\label{lemma_commutes_PH_homeo}
The persistence map~$\persmap$ is $\NonDecMaps$-equivariant: For all~$\phi \in \NonDecMaps $ and~$f\in \Filt_\SComplex$
\[\persmap(\phi\circ f)= \phi.\persmap (f).\]
\end{lemma}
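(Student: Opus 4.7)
The plan is to reduce the equivariance statement to a reparametrization of sublevel-set filtrations, and then trace this reparametrization through persistent homology to read off the transformation of barcodes. First, I would observe that for every $t \in \R$, the sublevel set of $\phi \circ f$ can be written as a sublevel set of $f$:
\[
(\phi \circ f)^{-1}((-\infty, t]) = f^{-1}((-\infty, s(t)]), \quad s(t) := \sup\{x \in \I : \phi(x) \leqslant t\},
\]
where $s$ is well-defined by continuity of $\phi$ on $\I$ and extends by the identity outside $\I$ because $\phi$ fixes the endpoints. This realizes the filtration $K(\phi \circ f)$ as the precomposition of the functor $K(f) : (\R, \leqslant) \to \SCompCat$ with the non-decreasing function $s$. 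Applying $\mathrm{H}_p$ identifies $\mathrm{H}_p(K(\phi \circ f))$ with the pullback $s^{\ast}\mathrm{H}_p(K(f))$ as persistence modules.

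Next, I would apply the Decomposition Theorem to write $\mathrm{H}_p(K(f)) \cong \bigoplus_J \mathbb{I}_J$ and compute each pullback $s^{\ast}(\mathbb{I}_{[b,d)})$, which is the interval module supported on $s^{-1}([b,d))$. A short monotonicity argument using the definition of $s$ shows $s(t) \geqslant b \Leftrightarrow t \geqslant \phi(b)$ and $s(t) < d \Leftrightarrow t < \phi(d)$, whence $s^{-1}([b,d)) = [\phi(b), \phi(d))$. Thus $s^{\ast}(\mathbb{I}_{[b,d)})$ is isomorphic to $\mathbb{I}_{[\phi(b),\phi(d))}$ when $\phi(b) < \phi(d)$ and vanishes when $\phi(b) = \phi(d)$. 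By uniqueness of the interval decomposition, reading off the bars of $\bigoplus_J s^{\ast}(\mathbb{I}_J)$ reproduces exactly the right-hand side of \eqref{eq_definition_action_barcodes} applied to $\persmap_p(f)$. Taking the product over $0 \leqslant p \leqslant d$ then assembles the claim for the total persistence map $\persmap$.

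The main delicate point is the treatment of the boundary and of infinite bars. Infinite bars $(b, \infty)$ are handled by the convention $\phi(\infty) = \infty$ together with the fact that both filtrations stabilize at the full complex $\SComplex$ for $t \geqslant 1$, which yields $s^{-1}([b, \infty)) = [\phi(b), \infty)$ exactly as desired. Zero-length bars arising when $\phi$ is constant on a plateau containing both $b$ and $d$ are precisely the ones suppressed in the definition of $\phi.D$, so the collapse of such bars in the action matches the vanishing of the corresponding pullback interval module; no separate case analysis is needed once the pullback description of $\mathrm{H}_p(K(\phi \circ f))$ is in place.
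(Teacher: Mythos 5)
Your proof is essentially the paper's own argument: both rewrite $K(\phi\circ f)$ as the precomposition of $K(f)$ with a right-inverse of $\phi$ (the paper uses $t\mapsto\max\phi^{-1}(\{t\})$, you use $s(t)=\sup\{x:\phi(x)\leqslant t\}$, which is the same function on $\I$), then push this through $\mathrm H_p$, decompose into interval modules, and compute the pullback of each $\mathbb I_{[b,d)}$ to be $\mathbb I_{[\phi(b),\phi(d))}$. The treatment of infinite bars and of zero-length bars matches as well, so this is the paper's proof in slightly different notation.
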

\begin{proof}

We fix a filter~$f\in \Filt_\SComplex$ and a map~$\phi\in \NonDecMaps$.

Recall that~$\persmap (f)$ is the union of~$\persmap_p(f)$ for~$p = 0, \dots , d$,  and~$\persmap_p(f)$ is given by the composition~$\Barc( \mathrm{H}_p \circ K(f))$.  
By definition of~$K(\phi \circ f)$, for~$t \in \mathbb R$ we have
\[K(\phi\circ f)(t) =(\phi \circ f)^{-1}\big((-\infty,t]\big)=f^{-1}\big((-\infty, \max(\phi^{-1}(\{t\}))]\big)=K(f)\big(\max({\phi}^{-1}(\{t\}))\big).\]
%&= f^{-1}(\phi ^{-1} (-\infty; t]) \\
%&=f^{-1}((-\infty; \text{max}(\phi^{-1}(t)]) \\
%&=K(f)(\text{max}({\phi}^{-1}(t))); 
%
%\begin{align*}
%K(\phi\circ f)(t) &=(\phi \circ f)^{-1}((-\infty;t])) \\
%&= f^{-1}(\phi ^{-1} (-\infty; t]) \\
%&=f^{-1}((-\infty; \text{max}(\phi^{-1}(t)]) \\
%&=K(f)(\text{max}({\phi}^{-1}(t))); 
%\end{align*}
%
Note that~$\phi$ is non-decreasing and continuous. Thus the inverse image~$\phi^{-1}(\{t\})$ of the point~$t$ is a closed, bounded interval and hence contains its maximum. On composition with the singular homology functor~$\mathrm{H}_p$ this yields
\begin{equation*}
\label{eq_homology_functor_equiv}
\mathrm{H}_p\circ K(\phi\circ f) (t) = \mathrm{H}_p\circ K(f) \big(\max (\phi^{-1}(\{t\}))\big).
\end{equation*}
Hence the barcode~$\persmap_p(\phi\circ f)$ is the barcode of the persistence module~$t\mapsto \mathrm{H}_p\circ K(f) \big(\max (\phi^{-1}(\{t\}))\big)$, which rewrites uniquely as a sum of interval modules:
\begin{align*}
     \mathrm{H}_p\circ K(f) \big(\max (\phi^{-1}(\{t\}))\big) &\simeq \big[ \bigoplus_{(b,d)\in \persmap_p(f)}\mathbb I_{[b,d)} \big] \big(\max (\phi^{-1}(\{t\}))\big) \\ & =  \bigoplus_{(b,d)\in \persmap_p(f)} \mathbb{I}_{[b,d)} \big(\max (\phi^{-1}(\{t\}))\big)\\
     & = \bigoplus_{(b,d)\in \persmap_p(f)} 
\mathbb I_{[\phi(b),\phi(d))}(t).
\end{align*}
The first equality follows from the definition of the barcode~$\persmap_p(f)$, i.e.~$\mathrm{H}_p\circ K(f)$ decomposes as~$\bigoplus_{(b,d)\in \persmap_p(f)} \mathbb I_{[b,d)}$. The second equality holds because pre-composition by the map~$t\mapsto \max (\phi^{-1}(\{t\}))$ induces an additive endofunctor on persistence modules. The third one is a consequence of~$\phi$ being non-decreasing, as then~$\max \phi^{-1}(\{t\})\in [b,d)$ is equivalent to~$t\in [\phi(b),\phi(d))$.  

This yields~$\persmap_p(\phi\circ f)= \phi.\persmap_p(f)$, and hence~$\persmap(\phi\circ f)= \phi.\persmap(f)$. 
\end{proof}

\begin{remark}
In the above proof we indirectly made use of the following more general categorical framework where both~$\SComplex (f)$ and~$\mathrm{H}_p \circ \SComplex (f)$  are considered as functors from the category~$(\R , \leq )$ of ordered real numbers defining concrete instances of  filtrations and persistence modules, that is functors
\[
F: (\mathbb R, \leq) \longrightarrow \SCompCat \quad \quad \text { and } \quad \quad \mathbb V : (\mathbb R, \leq) \longrightarrow \Vect.
\]
Precomposition with any endofunctor~$\alpha$ of~$(\mathbb R, \leq)$ defines a right action  both on filtrations  and on persistence modules. Furthermore,  composition by any functor~$L: \SCompCat \to \Vect$ defines a map from filtrations to persistence modules, and we have the following general equivariance result due to  associativity for composition of functors:
\[
L ( F. \alpha) = L \circ (F \circ \alpha) = (L \circ F) \circ \alpha  = L(F) . \alpha.
\]
The endofunctors of~$(\mathbb R, \leq)$ are the (weakly) order preserving maps of~$\R$, that is maps~$\alpha$ satisfying: $t' <t \Rightarrow \alpha (t') \leq \alpha (t)$. Note that~$\alpha$ does  not have to be continuous. 
%
%The functor $\mathrm{H}_p$ is then clearly seen to be equivariant with respect to this action.

For the proof of Lemma~\ref{lemma_commutes_PH_homeo} we have used  the functor~$L= \mathrm{H}_p$ and the fact that an element~$\phi \in \NonDecMaps$ gives rise to an endofunctor~$\alpha:= \max \phi^{-1}$ defined by~$ t \mapsto \max \phi^{-1} (\{ t\})$.
We furthermore used that $\SComplex (\phi \circ f) = \SComplex (f) \circ \max \phi ^{-1}$ and 
$\mathbb I_{[\phi (b) , \phi(d) )} = \mathbb I_{[b,d) } \circ \max \phi ^{-1}$ to translate the given action on the space of  filters and barcodes  into the functorial setting. 
\end{remark}

It is an easy exercise to show that the action of~$\NonDecMaps$ on filters is continuous. We next show that the action is also continuous on barcodes.
\begin{proposition}
\label{proposition_continuous_barcode_action}
$\NonDecMaps$ acts continuously on~$\Barc$, i.e. it is induced by a continous map
\[
\NonDecMaps\times \Barc \rightarrow \Barc.
\]
\end{proposition}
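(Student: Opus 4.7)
The plan is to check continuity at an arbitrary point $(\phi_0, D_0)\in \NonDecMaps\times \Barc$. Given $\epsilon>0$, I will exhibit a neighborhood of $(\phi_0, D_0)$ that maps into the open $\epsilon$-ball of $\phi_0.D_0$ via the triangle inequality
\[
d_b(\phi.D,\phi_0.D_0) \;\leq\; d_b(\phi.D,\phi_0.D)\;+\;d_b(\phi_0.D,\phi_0.D_0),
\]
controlling each summand separately, the first by $\|\phi-\phi_0\|_\infty$ and the second by $d_b(D,D_0)$.

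For the first summand I construct an explicit matching from $\phi.D$ to $\phi_0.D$ that pairs $(\phi(b),\phi(d))$ with $(\phi_0(b),\phi_0(d))$ whenever both remain non-trivial after applying the collapsing convention of equation~\eqref{eq_definition_action_barcodes}. Each matched pair then has $\ell^\infty$ cost at most $\|\phi-\phi_0\|_\infty$; for infinite bars $(b,\infty)\in D$ this is automatic since $\phi(\infty)=\phi_0(\infty)=\infty$ and no collapse can occur. The delicate case is when one of $\phi,\phi_0$ collapses $(b,d)$ while the other does not: say $\phi(b)=\phi(d)$ and $\phi_0(b)\neq\phi_0(d)$. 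Then $|\phi_0(d)-\phi_0(b)|\leq 2\|\phi-\phi_0\|_\infty$, so the orphan bar in $\phi_0.D$ can be left unmatched at cost $\|\phi-\phi_0\|_\infty$. In total $d_b(\phi.D,\phi_0.D)\leq \|\phi-\phi_0\|_\infty$.

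For the second summand I invoke the uniform continuity of $\phi_0$ on the compact interval~$\I$: choose $\eta>0$ such that $|x-y|<\eta$ implies $|\phi_0(x)-\phi_0(y)|<\epsilon/2$. Any matching $\gamma$ between $D$ and $D_0$ of cost~$<\eta$ can be pushed forward through $\phi_0$ endpoint-wise: matched pairs remain matched with cost at most $\epsilon/2$; unmatched bars stay unmatched with the same length control; and once again, bars collapsed on one side of the matching but not the other have length bounded by the modulus of continuity of $\phi_0$ and are handled exactly as in the previous paragraph. Thus $d_b(\phi_0.D,\phi_0.D_0)\leq \epsilon/2$ provided $d_b(D,D_0)$ is small enough.

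Combining the two estimates, the neighborhood $\{\|\phi-\phi_0\|_\infty<\epsilon/2\}\times\{d_b(D,D_0)<\eta\}$ maps into $\{d_b(\cdot,\phi_0.D_0)<\epsilon\}$, establishing continuity at $(\phi_0, D_0)$. The main obstacle in the argument is a bookkeeping issue rather than a conceptual one: the action of $\phi$ is not a bijection on the underlying multiset because bars may collapse, so the matchings must be constructed carefully to pair collapsed and non-collapsed bars. This obstacle dissolves precisely because the bottleneck distance assigns only half-length cost to unmatched bars, and collapsed bars are forced to be short whenever the perturbations are small.
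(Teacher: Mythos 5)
Your proof is correct and takes a somewhat different route from the paper's. The paper argues via sequential continuity: given $(\phi_n,D_n)\to(\phi,D)$, it invokes Proposition~\ref{proposition_bottleneck_ball} to split the bars of $D_n$ into those $\epsilon$-close to bars of $D$ and those that are $\epsilon$-small, and then builds one matching from $\phi_n.D_n$ to $\phi.D$ directly, estimating each bar by combining $\|\phi_n-\phi\|_\infty$ and uniform continuity of $\phi$ in a single triangle inequality. You instead split the problem at the level of the bottleneck metric itself, writing $d_b(\phi.D,\phi_0.D_0)\leq d_b(\phi.D,\phi_0.D)+d_b(\phi_0.D,\phi_0.D_0)$ and estimating each summand separately: the first is a clean Lipschitz-type bound $d_b(\phi.D,\phi_0.D)\leq\|\phi-\phi_0\|_\infty$ which isolates the dependence on $\phi$ and is itself a useful lemma, while the second handles the dependence on $D$ via uniform continuity. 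This decomposition is conceptually cleaner (it factors the two sources of perturbation) and avoids invoking Proposition~\ref{proposition_bottleneck_ball} at all, at the small cost of having to repeat the collapsed-bar case analysis twice. One minor point you gloss over: for unmatched bars of length up to $2\eta$, getting half-length $<\epsilon/2$ after applying $\phi_0$ needs a midpoint argument (two applications of the modulus of continuity), not a single one; this is routine but worth making explicit.
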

\begin{proof}
%
%It is enough to consider the case where $d=0$. 
We show that the map $\NonDecMaps\times \Barc \rightarrow \Barc$ is sequentially continuous. Let $(\phi_n,D_n)\in \NonDecMaps \times \Barc$ be a sequence converging to some $(\phi,D)$. Let $\epsilon>0$ be small enough such that the intervals of any barcode that is $\epsilon$-close to $D$ satisfy the alternative of Proposition~\ref{proposition_bottleneck_ball}. As~$\I$ is compact, $\phi$ is uniformly continuous and there exists $\eta>0$ such that $|\phi(d)-\phi(b)|<\epsilon$ whenever~$b,d\in \I$ satisfy $|d-b|< \eta$. Let~$n$ be large enough such that~$D_n$ is $\min(\epsilon,\eta)$-close to~$D$, and moreover $\|\phi_n-\phi\|_\infty<\epsilon$.  

If $(b_n,d_n)\in D_n$ is a small interval, i.e. $d_n-b_n<\min(\epsilon,\eta)$, then:
\[|\phi_n(d_n)-\phi_n(b_n)| < |\phi_n(d_n)-\phi(d_n)|+ |\phi(d_n)-\phi(b_n)|+|\phi(b_n)-\phi_n(b_n)| <3\epsilon.\]
Therefore~$(\phi_n(b_n),\phi_n(d_n))$ is a $3\epsilon$-small interval in~$\phi_n.D_n$. 

Else, $(b_n,d_n)$ is $\min(\epsilon,\eta)$-close to a unique interval $(b,d)$ of $D$, and then
\[\|(\phi_n(b_n),\phi_n(d_n))-(\phi(b),\phi(d))\|_\infty\leqslant \|(\phi_n(b_n),\phi_n(d_n))-(\phi(b_n),\phi(d_n))\|_\infty + \|(\phi(b_n),\phi(d_n))-(\phi(b),\phi(d))\|_\infty < 2\epsilon.\]
This yields a canonical matching from $\phi_n.D_n$ to $\phi.D$ with cost less than $3\epsilon$.
\end{proof}
%
%%%%%%%%%%%%%%%%%%%%%%%%%%%%%%%%%%%%%%%%%%%%%%%%%%%%%
%%%%%%%%%%%%%%%%%%%%%%%%%%%%%%%%%%%%%%%%%%%%%%%%%%%%%

\subsection{Stratifications of the spaces of filters and barcodes}
\label{sec:stratifications_barcodes_filters}
%
%In this section, we subdivide the domain and co-domain of~$\persmap$ into pieces, called {strata}, on which the persistence map is well behaved. 
%The notion of stratification we use in this subsection is a rather weak one (compared to Whitney, topological or Quinn stratified spaces) as we do not require gluing conditions between strata.

We introduce a weak notion of stratification in the sense that we will not require (for now) any  conditions on how strata are glued together. We will return to this in Section
\ref{section_barcode_stratification_regularity}.

\begin{definition}
\label{definition_stratification}
A {\em stratification} of a topological space~$\TopSpace$ is a {filtration} $\emptyset =\TopSpace_{-1}\subseteq \TopSpace_0 \subseteq \TopSpace_1 \subseteq \cdots $ by a (possibly infinite) sequence of closed subspaces $\TopSpace_i$, $i\in \N$, where the sets~$\TopSpace_i \setminus {\TopSpace_{i-1}}$ are topological manifolds of dimension~$i$. The  path connected components of~$\TopSpace_i\setminus \TopSpace_{i-1} $ are called {\em $i$-strata}, or strata of dimension~$i$. A {\em stratified map} between two stratified spaces~$\TopSpace$ and   $\TopSpace'$  is a continuous, filtration preserving  map of the underlying spaces. A {\em strongly stratified map} is a stratified map that maps any stratum of~$\TopSpace$ surjectively to a  stratum of~$\TopSpace '$.
\end{definition} 
We will show that~$\Filt_\SComplex$ and~$\CatBarc$ are stratified spaces with strata given by the $\IncHomSpace$-orbits, each homeomorphic to an open standard simplex for some~$i$,
%To fix notation, we denote by $\StandSimplex^i:=\big\{(x_1,\cdots, x_i) | \, 0\leqslant x_1\leqslant \cdots\leqslant x_i \leqslant 1\big\}\subseteq \R^i$  the (geometric realisation) of the standard $i$-simplex, and its interior by
%
\[
\mathring{\StandSimplex}^i:=\big\{(x_1,\cdots, x_i) \,  |\,  \, 0< x_1< \cdots< x_i < 1\big\} \subset \mathbb R^i.
\] 
%

%\subsubsection{Stratification of the space of filters}
%
Recall that~$\IncHomSpace$ acts continuously on the space of filters~$\Filt_\SComplex$ by post-composition. 
%The orbits form a partition. 
For~$f\in \Filt_\SComplex$, we denote the associated orbit by
\[
\StratumF = \StratumF _f := \{ \phi .f \, | \, \phi \in \IncHomSpace \}.
\]
%
%We denote by~$\StratumF$ a typical orbit of the (restricted) action of~$\IncHomSpace$. 
%It is convenient to think of 
Two filters are in the same orbit if they induce the same pre-order on the simplices of~$\SComplex$: $\simplex  \leq \simplex' \iff f(\simplex ) \leq f(\simplex ')$. Inside a given orbit a filter is uniquely determined by the sequence of its values that are not equal to~$0$ or~$1$ sorted in increasing order. Varying~$f$ by an element in~$\IncHomSpace$ varies this sequence over the whole open standard simplex. Thus for each orbit~$\StratumF$ the map
\begin{equation*}
\label{eq_coord_filter_inverse}
   \StratumF\overset \cong \longrightarrow \mathring{\StandSimplex}^{\dim \StratumF}, \quad  \quad \dim \StratumF := \sharp (\text{Im} (f) \cap (0,1)) 
\end{equation*}
that sends a filter to the increasing sequence of its distinct values that are not equal to~$0$ or~$1$ defines an affine homeomorphism. The inverse map~$\ChartFilt{}$ is a coordinate chart for the stratum~$\StratumF$ which in fact extends to the closure, 
\begin{equation}
\label{eq_coord_filter}
  \ChartFilt{} : \StandSimplex ^{\dim \StratumF}  \overset \cong \longrightarrow \overbar{ \StratumF},
\end{equation}
and~$\overbar {\StratumF}$ is the orbit of~$\NonDecMaps$.  We record that the orbits define a stratification.

\begin{proposition}
\label{prop_orbits_stratification_filter}
For each $i\geq 0$, let~$F_i$ be the union of $\IncHomSpace$-orbits~$\StratumF$ with~$\dim \StratumF \leq i$. This
defines a stratification of~$\Filt_\SComplex$. 
The $i$-strata are given by the orbits  
of dimension~$i$.
\end{proposition}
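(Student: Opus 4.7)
The plan is to verify the three requirements of Definition~\ref{definition_stratification} for the filtration $\{F_i\}$ of $\Filt_\SComplex$ by $\IncHomSpace$-orbit dimension: each $F_i$ is closed, each $F_i \setminus F_{i-1}$ is a topological $i$-manifold, and the path components of $F_i \setminus F_{i-1}$ are exactly the $i$-orbits. The key organising fact I would establish first is that there are only finitely many orbits at all. An orbit $\StratumF_f$ is determined by the preorder on $\SComplex$ given by $\sigma \preceq \sigma'\iff f(\sigma)\leq f(\sigma')$ together with the information of which simplices are mapped to $0$ and which to $1$; since $\SComplex$ is finite, there are finitely many such combinatorial types. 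In particular every $F_i$ is a finite union of orbits.

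Next I would prove that $F_i$ is closed. Since $F_i$ is a finite union of orbits, its closure equals the union of the closures $\overbar{\StratumF}$ of the finitely many orbits $\StratumF\subseteq F_i$. By the coordinate description~\eqref{eq_coord_filter}, $\overbar{\StratumF}$ is the $\NonDecMaps$-orbit of any $f\in\StratumF$. Any $\phi\circ f$ with $\phi\in\NonDecMaps$ has at most as many distinct interior values as $f$, because $\phi$ can only collapse, never separate, values. Hence $\overbar{\StratumF}$ is a union of orbits of dimension at most $\dim \StratumF\leq i$, so $\overbar{\StratumF}\subseteq F_i$ and $F_i$ is closed.

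I would then show that each orbit $\StratumF$ of dimension exactly $i$ is open inside $F_i\setminus F_{i-1}$. By the argument above, the closure in $\Filt_\SComplex$ of any other $i$-dimensional orbit $\StratumF'$ intersects $F_i\setminus F_{i-1}$ only in $\StratumF'$ itself (any lower-dimensional orbit in $\overbar{\StratumF'}$ lies in $F_{i-1}$). Applied to the finite union $\bigcup_{\StratumF'\neq \StratumF}\StratumF'$ of the remaining $i$-orbits, this shows that union is closed in $F_i\setminus F_{i-1}$, so its complement $\StratumF$ is open there. Combining this with the homeomorphism $\StratumF\cong \mathring{\StandSimplex}^i$ from~\eqref{eq_coord_filter}, which exhibits $\StratumF$ as an open subset of $\R^i$, we conclude that $F_i\setminus F_{i-1}$ is a disjoint union of open pieces each homeomorphic to $\mathring{\StandSimplex}^i$, hence a topological manifold of dimension $i$. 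Finally, since each $\mathring{\StandSimplex}^i$ is path connected and the orbits are pairwise disjoint open subsets of $F_i\setminus F_{i-1}$, the path components of $F_i\setminus F_{i-1}$ are precisely the $i$-dimensional orbits.

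The only real subtlety I anticipate is the openness step: one must be careful that an $i$-dimensional orbit $\StratumF'\neq \StratumF$ cannot accumulate onto $\StratumF$, and this is exactly where the dimension-drop property of $\NonDecMaps$-closures is used. Everything else is essentially combinatorial bookkeeping on orbits combined with the chart~\eqref{eq_coord_filter}.
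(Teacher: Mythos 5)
Your argument is correct. The paper does not supply an explicit proof of this proposition (it treats it as immediate from the preceding coordinate chart and points out in Remark~\ref{remark_filter_stratification_Whitney} that the stratification is the hyperplane arrangement generated by the equalities $f(\sigma)=f(\sigma')$ together with $f(\sigma)=0$ and $f(\sigma)=1$). The closest comparison is the paper's proof of the barcode analogue, Proposition~\ref{prop_orbits_stratification_barcodes}. There, closedness of $B_i$ is argued locally, via Proposition~\ref{proposition_bottleneck_ball}: nearby barcodes have at least as many interior endpoints, so the complement of $B_i$ is open. You instead argue globally: $F_i$ is a finite union of $\IncHomSpace$-orbits, each of whose closure is the $\NonDecMaps$-orbit (Eq.~\eqref{eq_coord_filter}), and $\NonDecMaps$-post-composition can only collapse values and hence drop the interior-value count. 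Both work; your route has the minor advantage of not needing the metric characterization of small balls, which for filters in $\R^\SComplex$ would in any case be more elementary than for barcodes. For the manifold and path-component structure, the two proofs run in parallel: what you call the dimension-drop property of $\NonDecMaps$-closures is exactly the content of Lemma~\ref{lemma_closure_barcode_strata} on the barcode side, and in both cases it yields that each $i$-orbit is open (and, being a continuous bijective image of $\mathring{\StandSimplex}^i$, path connected) inside $F_i\setminus F_{i-1}$. One small point worth flagging: your identification of an orbit with ``preorder plus which simplices map to $0$ or $1$'' is more precise than the paper's phrasing, which mentions only the induced preorder; since elements of $\IncHomSpace$ fix $0$ and $1$, the preimages $f^{-1}(0)$ and $f^{-1}(1)$ are indeed orbit invariants, so your version is the right one to use for the finiteness count.
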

\begin{remark}
\label{remark_filter_stratification_Whitney}
A stratum is simply an equivalence class of filters, where filters are declared equivalent if they induce the same pre-order on simplices. 
This point of view was already adopted in~\cite{leygonie2019framework} in the context of persistence differentiation. Equivalently, the stratification is the hyperplane arrangement generated by the equalities~$f(\simplex)=f(\simplex')$. It is well-known to be a Whitney stratification, but we will not make use of this richer structure here.
\end{remark}
%
%\subsubsection{Stratification of the space of barcodes}
%
Similarly we construct a stratification of barcodes~$\Barc^{d+1}$. For~$D \in \Barc^{d+1}$, we consider the associated $\IncHomSpace$-orbit
\[
\StratumD = \StratumD_D := \{ \phi . D \, | \, \phi \in \IncHomSpace\}.
\]
The orbits partition the space of barcodes~$\Barc^{d+1}$. 
Within such an orbit, the multiplicities and the nestings of bars are constant, and it is only the consecutive values of the interval endpoints that can vary. Thus for each orbit~$\StratumD$ the map
\begin{equation*}
\label{eq_coord_barcode_inverse}
\StratumD\overset \cong \longrightarrow \mathring{\StandSimplex}^{\dim \StratumD}, \quad \quad \dim \StratumD = \dim \StratumD_D= \dim D := \sharp \text { distinct endpoints of } D \text { that are in } (0,1)
\end{equation*}
that sends a barcode to the increasing sequence of its distinct values of interval endpoints that are not equal to~$0$,~$1$ or~$\infty$ defines a homeomorphism. The inverse map~$\ChartBarc$ is then a coordinate chart for the stratum:
\begin{equation}
\label{eq_coord_barcode}
\ChartBarc{}: \mathring{\StandSimplex}^{\dim \StratumD}  \overset \cong \longrightarrow \StratumD.
\end{equation}

\begin{remark}
In fact~$\ChartBarc{}$ is even a local isometry when~$\StratumD$ is equipped with the bottleneck distance and~$\mathring{\StandSimplex}^{\dim \StratumD}$ with the $\|.\|_\infty$-metric: This is because in a fixed stratum barcodes have  a constant number and nestings of bars; hence endpoints can be matched (in an increasing order) and the~$\|.\|_\infty$-metric gives us the cost of the  induced matching, which will be optimal when the barcodes are close enough. 
\end{remark}
\begin{proposition}
\label{prop_orbits_stratification_barcodes}
For each~$i\geq 0$, let~$B_i$ be the union of the $\IncHomSpace$-orbits~$\StratumD$ with~$\dim \StratumD \leq i$. This defines a stratification of~$\Barc^{d+1}$. The $i$-strata are the orbits  of dimension~$i$.
\end{proposition}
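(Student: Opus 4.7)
The plan is to mirror the argument for filters in Proposition~\ref{prop_orbits_stratification_filter}, using the coordinate chart~$\ChartBarc$ of~(\ref{eq_coord_barcode}) to identify each orbit of dimension~$i$ with the open standard simplex~$\mathring{\StandSimplex}^i$, and using Proposition~\ref{proposition_bottleneck_ball} to control bottleneck neighbourhoods. Unlike the filter case, however, a barcode neighbourhood can contain barcodes with extra arbitrarily short bars or with matched bars of split multiplicities, so extra care is needed to rule these out when restricting to a fixed stratum.

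First I would observe that each orbit~$\StratumD$ of dimension~$i$ is a path-connected topological $i$-manifold. This is immediate from the equivariant bijection~$\ChartBarc: \mathring{\StandSimplex}^i \isomto \StratumD$ of~(\ref{eq_coord_barcode}): its inverse sends a barcode in~$\StratumD$ to the ordered sequence of its distinct interior endpoints, and continuity of~$\ChartBarc$ itself follows from Proposition~\ref{proposition_continuous_barcode_action} applied along a straight-line path in~$\IncHomSpace$ from the identity. Next I would check that~$B_i$ is closed in~$\Barc^{d+1}$: given a convergent sequence~$D_n \to D$ with~$\dim D_n \leqslant i$, Proposition~\ref{proposition_bottleneck_ball} shows that for~$n$ large each distinct interior endpoint of~$D$ is the limit of one or more distinct interior endpoints of~$D_n$, while the remaining endpoints of~$D_n$ sit in vanishing short bars. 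In particular endpoints can only merge or disappear when passing to the limit, yielding~$\dim D \leqslant \liminf_n \dim D_n \leqslant i$.

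The main point is then to show that each dimension-$i$ orbit~$\StratumD$ is open inside~$B_i\setminus B_{i-1}$, from which the dimension-$i$ orbits are the path-connected components. Fix~$D\in \StratumD$ with distinct interior endpoints~$x_1<\cdots<x_i$ and take~$\epsilon>0$ as in Proposition~\ref{proposition_bottleneck_ball}. An~$\epsilon$-close barcode~$D'$ then decomposes into bars matched (with consistent total multiplicities) to those of~$D$, plus some $\epsilon$-small extra bars. If we also require~$D'\in B_i \setminus B_{i-1}$, i.e.~$\dim D' = i = \dim D$, two things are forbidden: an extra $\epsilon$-small bar would contribute two additional distinct interior endpoints, and a bar of~$D$ of multiplicity~$m$ split into several bars of~$D'$ with non-identical endpoints would likewise introduce new distinct endpoints. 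Neither can occur, so~$D'$ has exactly the same multiset structure as~$D$ with endpoints only slightly perturbed, and a piecewise linear order-preserving homeomorphism of~$\I$ sending~$x_j \mapsto x_j'$ exhibits~$D' \in \StratumD$.

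The main obstacle, genuinely absent in the filter case, is precisely this last bookkeeping step: the naive bottleneck topology allows short bars to appear and multiplicities to redistribute, and one must exploit the exact equality~$\dim D' = \dim D$ to rule both out and conclude that neighbouring strata-preserving perturbations are just coordinate changes in~$\ChartBarc$.
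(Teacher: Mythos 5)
Your proof is correct in overall strategy but follows a genuinely different route from the paper's for the key step. Both arguments establish closedness of~$B_i$ in essentially the same way via Proposition~\ref{proposition_bottleneck_ball}. The divergence is in showing that the dimension-$i$ orbits are the path components of~$B_i\setminus B_{i-1}$: the paper invokes the (later-stated) Lemma~\ref{lemma_closure_barcode_strata} to show that the closure of each $i$-orbit does not meet any other $i$-orbit, and then argues that a continuous path confined to~$B_i\setminus B_{i-1}$ cannot leave an orbit; you instead argue directly that each $i$-orbit is \emph{open} in~$B_i\setminus B_{i-1}$, using the local description of bottleneck neighbourhoods. Your version is more local and self-contained, in particular it does not forward-reference the lemma that the paper's proof relies on; the paper's version has the advantage of producing (and reusing) the closure characterisation, which is needed anyway in Remark~\ref{remark_extension_coordchart_barcodes} and Proposition~\ref{proposition_image_PH_determined_by_topdim}.

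Two small points in your bookkeeping need patching, though neither changes the conclusion. First, you list ``extra small bars'' and ``multiplicity splitting'' as the only ways $\dim D'$ can exceed~$\dim D$ for a nearby~$D'$, and then assert ``Neither can occur, so~$D'$ has exactly the same multiset structure as~$D$.'' There is a third mechanism you omit: if a bar of~$D$ has an endpoint on the boundary~$\{0,1\}$ (which does not count towards~$\dim D$), the matched bar of~$D'$ could have the corresponding endpoint pushed strictly inside~$(0,1)$, contributing a new interior endpoint. This too increases~$\dim D'$ and hence is excluded by the hypothesis~$\dim D'=i$, but the implication ``no extra bars and no splitting~$\Rightarrow$ same multiset structure'' as written is false without mentioning it. Second, the claim that an extra $\epsilon$-small bar ``would contribute two additional distinct interior endpoints'' is not exact: one of its endpoints may lie at~$0$ or~$1$, or (for~$\epsilon$ not small enough relative to~$\min(x_1,1-x_i)$) coincide with an existing endpoint. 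For~$\epsilon$ small enough such a bar contributes at least one new interior endpoint, which is all you need — but you should either state the weaker (correct) claim or note that~$\epsilon$ must also be taken smaller than~$\min(x_1,1-x_i)/2$, a condition not built into Proposition~\ref{proposition_bottleneck_ball} as stated.
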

\begin{proof}
As orbits provide a partition, the sets~$B_i$ for~$i \in \mathbb N$ form a filtration of~$\Barc^{d+1}$. Each of the~$B_i$ is also closed as the complement is open: barcodes close to a given barcode~$D$ have the same number of bars with endpoints in~$(0,1)$ or more as can be deduced from Proposition~\ref{proposition_bottleneck_ball}.

Furthermore, the complements~$B_i \setminus B_{i-1} $ are by definition the union of finitely many (disjoint) orbits~$\StratumD_1, \dots , \StratumD _{l_i}$, each homeomorphic to~$\mathring{\StandSimplex}^i$. The lemma below implies that the closure of each~$\StratumD_j$ does not intersect any of the~$\StratumD_k$ for~$k \neq j$. Thus a path~$D_t$ of diagrams in~$B_i \setminus B_{i-1}$ with~$D_0 \in \StratumD_j$ cannot leave the orbit~$\StratumD_j$. On the other hand, each stratum~$\StratumD_j$ is path connected since~$D_0$ can be connected to any other diagram~$D_1 \in \StratumD_j$ by a linear path~$D_t :=( t\phi+(1-t)\Id).D_0$ where~$\phi \in \IncHomSpace$ is such that~$D_1=\phi.D_0$.
Hence the path connected components of~$B_i\setminus B_{i-1}$ are the orbits of dimension~$i$, and~$B_i \setminus B_{i-1}$ is a manifold of dimension~$i$.
\end{proof}
\begin{lemma}
\label{lemma_closure_barcode_strata}
Let $D, D'\in \Barc^{d+1}$ be two barcodes. Then the following are equivalent:
\begin{itemize}
\item [(1)] There exists a non-decreasing map~$\phi\in \NonDecMaps$ such that~$D'=\phi . D$;
\item [(2)] $\StratumD_{D'}\subseteq \overbar{\StratumD_{D}}$, i.e. the stratum containing~$D'$ is in the closure of that containing~$D$. 
\end{itemize}
%Furthermore, when~(1) and~(2) hold,  $D' \notin \StratumD_D$ if and only if  $\phi \notin \IncHomSpace$ and $\dim \StratumD_{D'} < \dim \StratumD_D$.
\end{lemma}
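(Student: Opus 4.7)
The plan is to establish each implication separately, using that $\NonDecMaps$ is the sup-norm closure of $\IncHomSpace$ and that the $\NonDecMaps$-action on $\Barc^{d+1}$ is continuous (Proposition~\ref{proposition_continuous_barcode_action}).

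For (1)$\Rightarrow$(2), I would approximate the given $\phi \in \NonDecMaps$ by a sequence $\phi_n \in \IncHomSpace$ converging to $\phi$, and then apply continuity of the action to conclude that $\phi_n . D \to \phi . D = D'$. Since each $\phi_n . D$ lies in $\StratumD_D$, this already shows $D' \in \overline{\StratumD_D}$. The full orbit $\StratumD_{D'}$ sits in $\overline{\StratumD_D}$ because each $\psi \in \IncHomSpace$ acts as a self-homeomorphism of $\Barc^{d+1}$, so closures of $\IncHomSpace$-orbits are $\IncHomSpace$-invariant.

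For (2)$\Rightarrow$(1), suppose $\phi_n . D \to D'$ with $\phi_n \in \IncHomSpace$. I expect this to be the main obstacle: since $\NonDecMaps$ is not compact (its elements can have arbitrarily large slopes), there is no reason for $\phi_n$ to subconverge in sup-norm, so I cannot simply extract a limiting $\phi$. The idea is that I only need to control $\phi$ on a finite set. Let $X \subset \I$ consist of the finite endpoints of the bars of $D$ together with $\{0,1\}$. By compactness of $\I$ and finiteness of $X$, after passing to a subsequence I may assume $\phi_n(x) \to y(x)$ for each $x \in X$; because each $\phi_n$ is non-decreasing and fixes $0$ and $1$, the map $y$ inherits these properties on $X$. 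I would then define $\phi \in \NonDecMaps$ by piecewise linear interpolation of $y$ between consecutive points of $X$.

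The final step is to verify $\phi . D = D'$. For each bar $(b,d) \in D$ the associated bar $(\phi_n(b), \phi_n(d))$ of $\phi_n . D$ converges to $(\phi(b), \phi(d))$, with the convention $\phi(\infty) = \infty$ for infinite bars. By Proposition~\ref{proposition_bottleneck_ball} applied to the convergence $\phi_n . D \to D'$, every bar of $D'$ is approached by such sequences counted with the correct multiplicities, while any bars whose endpoints collapse to a common value must be suppressed as $\epsilon$-small. This is exactly the rule defining the action $\phi . D$ in equation~\eqref{eq_definition_action_barcodes}, so $\phi . D = D'$. The only bookkeeping subtlety is tracking multiplicities when distinct bars of $D$ happen to land on the same limit bar in $D'$, which is where Proposition~\ref{proposition_bottleneck_ball} is used explicitly.
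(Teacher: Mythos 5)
Your proof is correct, and the (2)$\Rightarrow$(1) direction takes a genuinely different route from the paper. For (1)$\Rightarrow$(2) both you and the paper argue via continuity of the action (Proposition~\ref{proposition_continuous_barcode_action}); the paper pins down the explicit path $t \mapsto ((1-t)\mathrm{Id} + t\phi).D$, while you use an arbitrary approximating sequence $\phi_n \to \phi$ in $\IncHomSpace$ and then the general fact that closures of orbits under a group acting by homeomorphisms are group-invariant. That abstract step replaces the paper's explicit manipulation with $\beta.D_t = (\beta \circ \phi_t).D$, and is a cleaner way to lift $D' \in \overbar{\StratumD_D}$ to $\StratumD_{D'} \subseteq \overbar{\StratumD_D}$.

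For (2)$\Rightarrow$(1) the approaches diverge substantively. The paper fixes a single large $n$, uses Proposition~\ref{proposition_bottleneck_ball} to build an explicit ``correcting'' map $\phi' \in \NonDecMaps$ satisfying $\phi'.D_n = D'$ (this requires checking that the $\epsilon$-small bars of $D_n$ do not cover any gap $[x_i,x_{i+1}]$ of $D'$), and then sets $\phi := \phi' \circ \phi_n$. You instead extract a subsequence of $(\phi_n)$ converging pointwise on the finite set $X$ of endpoints of $D$ and define $\phi$ by piecewise-linear interpolation of the limiting values. The compactness step is sound since $X$ is finite and each $\phi_n(x) \in \I$, and the verification $\phi.D = D'$ via Proposition~\ref{proposition_bottleneck_ball} works exactly as you sketched: for $(b,d) \in D$ with $\phi(b) = \phi(d)$, the bars $(\phi_n(b),\phi_n(d))$ become arbitrarily small; for $\phi(b) < \phi(d)$ they stabilise near a bar of $D'$; and the multiplicity accounting is forced by the two-sided characterisation in that proposition. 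The trade-off is that your argument requires passage to a subsequence and slightly more delicate bookkeeping of which bars of $\phi_n.D$ are matched with which of $D'$, whereas the paper's argument constructs $\phi$ from a single approximant $D_n$ but needs the geometric observation about small bars not filling gaps. Both yield a valid $\phi \in \NonDecMaps$.
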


\begin{proof}
Assume (1) and let $\phi\in \NonDecMaps$ be such that~$D' = \phi .D$. Consider the paths 
\[
\phi_t := (1-t)\mathrm{Id}+t\phi \quad \quad \text {  and  } \quad \quad D_t:= \phi_t .D.
\]
For~$t \in [0, 1)$,~$\phi_t \in \IncHomSpace$ and hence~$D_t\in \StratumD_D$. By continuity of the monoid action,  Proposition~\ref{proposition_continuous_barcode_action}, the path of barcodes~$ D_t $ is continuous in~$t$ on the whole interval~$ [0,1]$. Consequently, in the limit,~$D'=D_1\in \overbar{\StratumD_D}$.
If~$D'' \in \StratumD_{D'}$ is another barcode from the orbit defined by~$D'$ then there exists a~$\beta \in \IncHomSpace$ with~$D'' = \beta . D'$. Consider $\beta . D_t = (\beta \circ \phi_t ). D$. By the same argument as above, this is a continuous path of barcodes from~$D$ to~$D''$ that is contained entirely in~$\StratumD_D$ with the possible exception when~$t=1$. Hence~$D'' \in \overbar {\StratumD_D}$, and more generally~$\StratumD_{D'} \subseteq \overbar \StratumD_D $ which is (2).

Conversely, assume (2). If~$D' \in \StratumD _D$ then by definition of~$\StratumD_D$ there exists a~$\phi \in \IncHomSpace$ with~$D' = \phi . D$ and~(1) is satisfied. So we may assume~$D' \notin \StratumD_ D$ (and hence the entire orbit~$\StratumD_{D'}$ is contained in the boundary~$\overbar {\StratumD_D} \setminus \StratumD_D$). 
Let~$D_n , n\geq 0$, be a sequence in~$\StratumD_D$ converging to~$D'$, and let~$\phi_n \in \IncHomSpace$ such that~$D_n = \phi_n . D$.
Then, by the characterisation of the local neighborhoods in~$\Barc$, Proposition~\ref{proposition_bottleneck_ball} for small enough~$\epsilon$ and large enough~$n$, the bars in~$D'$ can be matched up (one-to-one) with bars in~$D_n$ that are~$\epsilon$-close, and furthermore any additional bar in~$D_n$ is of length less~$\epsilon$. Let~$\gamma$ be an optimal matching from~$D_n$ to~$D'$ which collapses the small bars. The number of intervals in~$D_n$ is the same as in~$D$, in particular finite, so for~$\epsilon$ small enough relative to the distances between consecutive endpoints~$x_i,x_{i+1}$ of~$D'$, the union of the $\epsilon$-small intervals in~$D_n$ do not cover any segment~$[x_i,x_{i+1}]$. In this case we can use~$\gamma$ to construct a non-decreasing map~$\phi' \in \NonDecMaps$ with~$\phi'. D_n = D'$. Hence, $D' = \phi' . D_n  = (\phi ' \circ \phi _n) . D = \phi .D$ with~$\phi := \phi' \circ \phi_n$. This gives~(1). 
\end{proof}
\begin{remark}
\label{remark_extension_coordchart_barcodes}
The monoid~$\NonDecMaps$ acts coordinate-wise on any simplex~$\StandSimplex^i$, and under this action the orbit of any point in  the interior~$\mathring{\StandSimplex}^i$ is the closed simplex~$\StandSimplex^{i}$. As both~$\ChartFilt$ and~$\ChartBarc{}$ are compatible with the action restricted to~$\IncHomSpace$, they can be extended to equivariant maps from the closed simplex:
\begin{equation}
    \label{eq_extension_mu_inverse}
    \ChartBarc{}: \StandSimplex^{\dim \StratumD}\longrightarrow \overbar{\StratumD}, \quad \text{with} \quad \ChartBarc{}(\phi.x):=\phi.\ChartBarc{}(x). 
\end{equation}
This is easily seen to be well-defined, i.e. given~$x,x'\in \mathring{\StandSimplex}^{\dim \StratumD}$ and~$\phi,\phi'\in \NonDecMaps$ such that~$\phi.x=\phi'.x'$ we have~$\phi.\ChartBarc{}(x)=\phi'.\ChartBarc{}(x')$.
From the above lemma~$\overbar {\StratumD}$ is the monoid orbit of~$\NonDecMaps $, therefore the extension~$\ChartBarc{}$ is surjective. 
Hence~$\overbar {\StratumD}$ (as a set) can be identified as a quotient of the closed standard simplex. Indeed,~$\ChartBarc{}$ is a strongly stratified map where the stratification on the standard simplex is the usual one and~$\overbar {\StratumD}$ is considered a sub-stratified space of~$\Barc^{d+1}$.
In general~$\ChartBarc{}$ is not injective on the boundary, see Example~\ref{example_line_complex}.%, and in particular~$\ChartBarc{}$ may not be invertible to the closure~$\overbar {\StratumD}$. %However we note that in the interior~$\ChartBarc{}: \StratumD\rightarrow \mathring{\StandSimplex}^{\dim \StratumD}$ is a local isometry when~$\mathring{\StandSimplex}^{\dim \StratumD}$ is equipped with the $\|.\|_\infty$-metric and~$\StratumD$ with the bottleneck distance: The~$\|.\|_\infty$-metric translates into a matching of intervals in increasing order of endpoint values, and this is the optimal matching for barcodes that are close enough. 
\end{remark}
With both the stratifications of~$\Filt_\SComplex$ and~$\Barc^{d+1}$ in place, the persistence map $\persmap: \Filt_\SComplex \to \Barc^{d+1} $ is then a map of stratified spaces in the following strong way:
\begin{proposition}
\label{proposition_PH_sends_strata_to_strata}
The persistence map is a strongly stratified map. Namely, let $\StratumF$ be an $i$-stratum in the space of filters. Then there exists a $j$-stratum $\StratumD$  with $j\leq i$ and $\persmap(\StratumF)=\StratumD$.
\end{proposition}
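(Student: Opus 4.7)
The plan is to use the equivariance of $\persmap$ under the $\IncHomSpace$-action established in Lemma~\ref{lemma_commutes_PH_homeo} together with the fact that the strata on both sides are exactly the $\IncHomSpace$-orbits. The argument splits cleanly into two parts: first showing that $\persmap$ maps any filter stratum $\StratumF$ onto a single barcode stratum $\StratumD$, and then bounding $\dim \StratumD \leq \dim \StratumF$.

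For the surjection onto a single stratum, fix $f_0\in \StratumF$ and set $D_0 := \persmap(f_0)$, whose orbit I denote $\StratumD := \StratumD_{D_0}$. If $f\in \StratumF$, then $f = \phi\circ f_0$ for some $\phi\in \IncHomSpace$, so by Lemma~\ref{lemma_commutes_PH_homeo} we have $\persmap(f) = \phi.\persmap(f_0) = \phi.D_0\in \StratumD$; this shows $\persmap(\StratumF)\subseteq \StratumD$. Conversely, any $D\in \StratumD$ can be written as $D=\phi.D_0$ for some $\phi\in \IncHomSpace$, and then $D = \phi.\persmap(f_0) = \persmap(\phi\circ f_0)$ lies in the image of $\StratumF$, again by equivariance. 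Hence $\persmap(\StratumF) = \StratumD$.

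For the dimension bound, I will use that every endpoint of a bar in $\persmap(f_0)$ that lies in $(0,1)$ is in particular a value of $f_0$ in $(0,1)$: this is the standard fact that in the persistent homology of a filtered simplicial complex the birth and death times of each bar are values of the filter on some simplex. Consequently the set of distinct endpoints of $D_0$ in $(0,1)$ injects into the set of distinct values of $f_0$ in $(0,1)$, which by the definitions of $\dim \StratumD$ and $\dim \StratumF$ gives $j = \dim \StratumD \leq \dim \StratumF = i$.

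The only potential obstacle is formalising the "endpoints are critical values" fact, but this is transparent from the definition of~$\persmap$: the barcode of~$\mathrm{H}_p\circ K(f_0)$ can only change at values $t$ where the sublevel complex $f_0^{-1}((-\infty,t])$ changes, i.e. at values in the image of $f_0$. Infinite endpoints and endpoints equal to $0$ or $1$ do not contribute to $\dim \StratumD$, so one really does land in $(0,1)$ on both sides of the inequality, and the bound is immediate.
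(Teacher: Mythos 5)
Your proposal is correct and follows essentially the same approach as the paper: both rely on Lemma~\ref{lemma_commutes_PH_homeo} (equivariance under $\IncHomSpace$) together with the identification of strata as $\IncHomSpace$-orbits to conclude $\persmap(\StratumF_f) = \StratumD_{\persmap(f)}$. Your proof is slightly more complete in that you explicitly establish the dimension bound $j\leq i$ by observing that the distinct endpoints of $\persmap(f_0)$ in $(0,1)$ form a subset of the distinct values of $f_0$ in $(0,1)$; the paper's formal proof omits this step, treating it as evident from the surrounding discussion (the bound appears as a remark just before Proposition~\ref{proposition_image_PH_determined_by_topdim}).
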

\begin{proof}
Strata in the spaces of filters and barcodes are the $\IncHomSpace$-orbits with respect to which~$\persmap$ is equivariant by Lemma~\ref{lemma_commutes_PH_homeo}. We thus have for  all $f \in \Filt_\SComplex$  and associated stratum $\StratumF_f$
\[\persmap (\StratumF_f) = 
\{ \persmap (\phi. f) = \phi . \persmap (f) \, | \, \phi \in \IncHomSpace \} =  \StratumD_{\persmap(f)} . \qedhere
\]
\end{proof}
Furthermore, over a fixed stratum in $\CatBarc$ the fibers are all homeomorphic.
\begin{proposition}
\label{proposition_same_fiber_common_stratum}
Let $\StratumD\subseteq \CatBarc$ be a barcode stratum. The pre-images of $\persmap$ over elements in~$\StratumD$ are pairwise homeomorphic.
\end{proposition}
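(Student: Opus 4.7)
The plan is to exploit the equivariance of $\persmap$ under the $\IncHomSpace$-action combined with the fact that, within a single barcode stratum, any two barcodes differ by an element of $\IncHomSpace$.

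First, given two barcodes $D, D' \in \StratumD$, by definition of the stratum as an $\IncHomSpace$-orbit there exists $\phi \in \IncHomSpace$ with $D' = \phi . D$. The map $\phi$ is an orientation preserving homeomorphism of $\I$, so $\phi^{-1} \in \IncHomSpace$ as well. Consider the two post-composition maps
\[
L_\phi : \Filt_\SComplex \longrightarrow \Filt_\SComplex, \quad f \longmapsto \phi \circ f \qquad \text{and} \qquad L_{\phi^{-1}} : \Filt_\SComplex \longrightarrow \Filt_\SComplex, \quad g \longmapsto \phi^{-1} \circ g.
\]
Both are continuous because the $\NonDecMaps$-action on $\Filt_\SComplex$ is continuous (restricted from the action of the submonoid $\IncHomSpace$), and they are mutual inverses, hence $L_\phi$ is a homeomorphism of $\Filt_\SComplex$.

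Next, I would invoke the equivariance Lemma~\ref{lemma_commutes_PH_homeo}: for any $f \in \Filt_\SComplex$,
\[
\persmap(L_\phi(f)) = \persmap(\phi \circ f) = \phi . \persmap(f).
\]
In particular, if $f \in \persmap^{-1}(D)$ then $\persmap(L_\phi(f)) = \phi . D = D'$, so $L_\phi$ restricts to a continuous map $\persmap^{-1}(D) \to \persmap^{-1}(D')$. Symmetrically, $L_{\phi^{-1}}$ restricts to a continuous map $\persmap^{-1}(D') \to \persmap^{-1}(D)$, and these restrictions remain mutual inverses. This yields the desired homeomorphism $\persmap^{-1}(D) \cong \persmap^{-1}(D')$.

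There is no real obstacle here; the proof is a direct application of equivariance. The only minor subtlety worth checking is that the restricted map actually lands in the claimed fiber, which is immediate from the computation above since $\phi$ is a bijection on $\I$ so no bars are collapsed and $\phi . D = D'$ exactly. The whole argument is essentially a two-line verification once the equivariance and continuity results of the previous subsection are in place.
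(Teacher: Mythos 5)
Your proof is correct and takes essentially the same route as the paper: pick $\phi\in\IncHomSpace$ with $D'=\phi.D$, observe that post-composition by $\phi$ is a homeomorphism of $\Filt_\SComplex$, and use the equivariance Lemma~\ref{lemma_commutes_PH_homeo} to see that it restricts to a homeomorphism between the two fibers. You merely spell out the continuity and the inverse map more explicitly than the paper does.
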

\begin{proof}
Let $D,D'\in \StratumD$ so that~$D'=\phi .D$ for some $\phi\in \IncHomSpace$. By the equivariance of~$\persmap$, Lemma~\ref{lemma_commutes_PH_homeo}, the action (by post-composition) of~$\phi$ on~$\Filt_\SComplex$ restricts to a map  from~$\persmap^{-1}(D)$ to $\persmap^{-1}(D')= \persmap ^{-1}( \phi. D) = \phi . \persmap ^{-1} (D)$. 
\end{proof}
Therefore~$\CatBarc$ is a stratified subspace of~$\Barc^{d+1}$ consisting of the union of strata~$\persmap(\StratumF)$, where~$\StratumF\subseteq\Filt_\SComplex$ varies over the set of strata of the space of filters. In particular~$\CatBarc$ is a finite union of strata, finite dimensional and compact, unlike~$\Barc^{d+1}$ which has infinitely many strata of arbitrarily large dimensions. 
\subsection{The space $\Barc_\SComplex$ as a quotient space}
We will now show that as a topological space~$\CatBarc$ is the quotient of the space of filters~$\Filt_\SComplex$ induced by the persistence map.
\begin{proposition}
\label{prop_quotient_topology}
The quotient topology on~$\Barc_\SComplex=\persmap(\Filt_\SComplex)$ induced by~$\persmap$ agrees with the bottleneck topology, that is we have a homeomorphism from the quotient
 \[\overbar{\persmap}:(\Filt_\SComplex \, /   \sim ) \, \, \overset \cong  \longrightarrow \, \CatBarc,\]
where~$\sim$ is defined by $f\sim f' \Leftrightarrow \persmap(f)=\persmap(f')$.
\end{proposition}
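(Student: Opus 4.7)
The plan is to exploit a standard compact-to-Hausdorff argument. By definition, $\overbar{\persmap}$ is the factorisation of $\persmap$ through the quotient by the fibre equivalence relation $\sim$, so it is automatically a well-defined bijection, and continuous with respect to the quotient topology because $\persmap$ itself is continuous (this is the stability Theorem~\ref{theorem_stability}). The only non-trivial content is therefore to show that $\overbar{\persmap}$ is an open map (equivalently, that its inverse is continuous).

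First I would show that $\Filt_\SComplex$ is compact. Since filters take values in $\I=[0,1]$, we have $\Filt_\SComplex\subseteq \I^{\SComplex}$, and $\I^{\SComplex}$ is compact by Tychonoff. The monotonicity constraints $f(\simplex')\leqslant f(\simplex)$ for $\simplex'\subseteq\simplex$ are finite intersections of closed half-spaces, so $\Filt_\SComplex$ is closed in $\I^{\SComplex}$ and hence compact. Consequently the quotient $\Filt_\SComplex/\!\!\sim$ is also compact.

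Next I would observe that the codomain $\CatBarc$ is Hausdorff: the bottleneck distance $d_b$ is a genuine (extended) metric on $\Barc^{d+1}$, hence the bottleneck topology is Hausdorff, and this property is inherited by the subspace $\CatBarc$.

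Finally, the bijection $\overbar{\persmap}:\Filt_\SComplex/\!\!\sim \, \to \CatBarc$ is a continuous bijection from a compact space to a Hausdorff space. The standard argument then applies: any closed subset of $\Filt_\SComplex/\!\!\sim$ is compact, its image under $\overbar{\persmap}$ is compact (as the continuous image of a compact set), and compact subsets of a Hausdorff space are closed. Hence $\overbar{\persmap}$ is a closed map, and being a closed continuous bijection it is a homeomorphism. The only step that requires any real work is the compactness of $\Filt_\SComplex$, and this reduces to noting that the defining inequalities are closed conditions inside the compact cube $\I^{\SComplex}$; no subtleties involving the bottleneck metric enter the argument.
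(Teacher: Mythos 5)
Your argument is correct and genuinely different from the paper's. You invoke the standard compact-to-Hausdorff principle: $\Filt_\SComplex$ is a closed subset of the compact cube $\I^{\SComplex}$ (the monotonicity constraints are closed), hence compact, so the quotient $\Filt_\SComplex/\!\!\sim$ is compact; the target $\CatBarc$ is Hausdorff because the bottleneck distance is a true (extended) metric on finite barcodes; and a continuous bijection from a compact space to a Hausdorff space is automatically a homeomorphism. Each of these steps is sound, and the result follows immediately.

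The paper instead proves openness of $\overbar{\persmap}$ directly: given an open $U$ in the quotient, it uses compactness of the fiber $\persmap^{-1}(D)$ to find an $\eta$-offset inside $\persmap^{-1}(U)$, and then, using Proposition~\ref{proposition_bottleneck_ball}, explicitly constructs a non-decreasing map $\phi$ with $\phi.D'=D$ and $\|\phi-\mathrm{Id}\|_\infty$ small, from which it deduces $\persmap^{-1}(D')\subseteq \persmap^{-1}(D)_\eta$. Your argument is shorter and more elementary, relying only on general point-set topology rather than any structure of the bottleneck metric beyond metrisability. The paper's constructive route has the advantage of making visible exactly which ingredient fails when one passes to unbounded filters (where $\Filt(\R)_\SComplex$ is not compact and the conclusion indeed fails, as the paper shows in Example~\ref{example_quotient_not_bottleneck_unbounded_situation}); your argument also breaks there, for the same ultimate reason, but the failure is less localised. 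Both are valid proofs of the stated proposition.
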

\begin{proof}
%
%Let~$U$ be an open set in~$\CatBarc$. 
By Theorem~\ref{theorem_stability},~$\persmap: \Filt_K \rightarrow \CatBarc$ is continuous, and hence, by the universal property of the quotient, it induces a continuous bijection~$\overbar{\persmap}$.
%: hence~$U$ is an open set in~$(\Filt_\SComplex \, /   \sim )$.  
It remains to prove that the inverse~$\overbar{\persmap}^{-1}$ is also continuous, or equivalently that~$\overbar{\persmap}$ is open.

Let~$U$ be an open set in~$(\Filt_\SComplex \, /   \sim )$ and let~$D\in U$. Then by definition of the quotient topology~$\persmap^{-1}(U)$ is open and contains~$\persmap^{-1}(D)$. 
Since~$\persmap$ is continuous,~$\persmap^{-1}(D)$ is closed, and being a subset of~$\I^\SComplex$ it is in fact compact. Thus for some~$\eta>0$ we have that the~$\eta$-offset of~$\persmap^{-1}(D)$ lies in~$\persmap^{-1}(U)$:
\[
\persmap^{-1}(D)_{\eta}:=\big\{ f\in \Filt_\SComplex, \,  \exists g\in \persmap^{-1}(D), \, \|f-g\|_\infty<\eta \big\} \subseteq \persmap^{-1}(U).  \]
 
We will show that~$\persmap^{-1}(D')\subseteq \persmap^{-1}(D)_\eta$ for~$D'$ close enough to~$D$ in the bottleneck metric. By the above this implies that~$\persmap^{-1}(D')\subseteq \persmap^{-1}(U)$, which amounts to~$D'\in U$, and hence~$U$ is an open set in~$\CatBarc$.

By Proposition~\ref{proposition_bottleneck_ball}, for~$\epsilon$ small enough, bars~$(b',d')\in D'$ that are not~$\epsilon$-small are matched up with bars of~$D$ that are~$\epsilon$-close. We consider the following equivalence relation on interval endpoints~$b',d'$ of~$D'$. First, we deem equivalent all endpoints that are~$\epsilon$-close to the same endpoint~$x_i$ of~$D$. Then, we deem equivalent two~$\epsilon$-small intervals that overlap:~$[b',d']\cap [b'',d'']\neq \emptyset$ and take the transitive closure of that relation. Since there are at most~$\sharp \SComplex$ endpoints in~$D'$, the endpoints in the same equivalence class span a range of size at most~$\sharp \SComplex \times \epsilon$. Thus if~$\epsilon$ has been chosen small enough to start with, then it is impossible to find in the same equivalence class two endpoints of~$D'$ that are~$\epsilon$-close to distinct endpoints~$x_i\neq x_j$ of~$D$.

This allows constructing a map~$\phi:\I \to \I$ such that~$\phi.D'=D$ as follows: Over the span of an equivalence class of endpoints we define~$\phi$ as the constant map with value~$x_i$ in the case where there is an endpoint~$b'$ or~$d'$ which is~$\epsilon$-close to the endpoint~$x_i$ of~$D$, and with an arbitrary value in the span in the case where there is no such endpoint in the equivalence class. We extend~$\phi$ linearly on~$\I$. 
By design~$\phi$ differs from the identity map by at most~$\sharp \SComplex \times \epsilon$, because the span of each equivalence class has diameter bounded by~$\sharp \SComplex \times \epsilon$. Hence if we take an arbitrary~$f\in \persmap^{-1}(D')$, then~$g:=\phi\circ f$ belongs to~$\persmap^{-1}(D)$ by equivariance of~$\persmap$, and~$g$ is~$(\sharp \SComplex \times \epsilon)$-close to~$f$. Up to shrinking~$\epsilon$ so that~$\sharp \SComplex \times \epsilon<\eta$, we have~$f\in \persmap^{-1}(D)_{\eta}$. Therefore~$\persmap^{-1}(D')\subseteq \persmap^{-1}(D)_{\eta}$.
\end{proof}
The top dimensional strata of $\Filt_\SComplex$ consists of the injective filters~$f: \SComplex \to  \I$ that do not take the values~$0$ or~$1$. Hence the dimension of the top strata is~$\sharp \SComplex$. The interval endpoints of a barcode~$D=\persmap (f)$ form a subset of the values of~$f$ and in general~$\dim \StratumD_D \leq \dim \StratumF_f$. However, when~$f$ is injective, each simplex enters the next sublevel set of the filtration by itself and hence induces a change in homology. Thus in particular we see that the dimension of the top dimensional barcode strata in~$\Barc_\SComplex$ is again~$\sharp \SComplex$. Let
\[
\Barc_\SComplex ^{\mathrm {top}}:=  \bigcup _{\dim \mathcal B = \sharp \SComplex } \mathcal B =
%  \big \{D\in \CatBarc \, | \,  \dim D= \sharp \SComplex \big\}= 
\{\persmap(f) \, | \,  f\in \Filt_\SComplex \text{ is injective and does not take the values $0$ or $1$}\big\}.
\]
%We now observe that~$\CatBarc$ is the closure of its collection of top dimensional strata.
%
\begin{proposition}
\label{proposition_image_PH_determined_by_topdim}
The barcode space~$\CatBarc$ is the closure of its top dimensional strata, i.e.
\[\CatBarc=\overbar{\CatBarc^{\mathrm{top}}}.\]
\end{proposition}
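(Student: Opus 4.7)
The plan is to establish the non-trivial inclusion $\CatBarc \subseteq \overbar{\CatBarc^{\mathrm{top}}}$. The reverse inclusion is immediate: $\CatBarc = \persmap(\Filt_\SComplex)$ is the image of the compact set $\Filt_\SComplex$ under the continuous map $\persmap$ (Theorem~\ref{theorem_stability}) into the metric space $\Barc^{d+1}$, hence itself closed, so $\overbar{\CatBarc^{\mathrm{top}}} \subseteq \overbar{\CatBarc} = \CatBarc$.

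For the non-trivial direction, pick $D \in \CatBarc$ and write $D = \persmap(f)$ for some $f \in \Filt_\SComplex$. The strategy is to approximate $f$ in the supremum norm by filters $f_n \in \Filt_\SComplex$ which are injective and take values in the open interval $(0,1)$; by continuity of $\persmap$ this yields $\persmap(f_n) \to D$, while by definition each $\persmap(f_n)$ lies in $\CatBarc^{\mathrm{top}}$, placing $D$ in the desired closure. To build the sequence, first fix an auxiliary injective filter $g \in \Filt_\SComplex$ with values in $(0,1)$: enumerate the simplices $\simplex_1, \dots, \simplex_N$ of $\SComplex$ along a linear extension of the face partial order (so $\simplex_i \subsetneq \simplex_j$ implies $i < j$) and set $g(\simplex_i) := i/(N+1)$. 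Then define $f_n := (1 - \tfrac{1}{n}) f + \tfrac{1}{n} g$. As a convex combination of filters, $f_n$ is again a filter; each value lies strictly between $0$ and $1$ since $f(\simplex) \in [0,1]$ and $g(\simplex) \in (0,1)$; and $\|f_n - f\|_\infty \leqslant \tfrac{1}{n}\|g - f\|_\infty \to 0$.

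The only step requiring a brief verification is injectivity of $f_n$ for large $n$. An equality $f_n(\simplex_i) = f_n(\simplex_j)$ with $i \neq j$ rearranges to $(n-1)\bigl(f(\simplex_i) - f(\simplex_j)\bigr) = g(\simplex_j) - g(\simplex_i)$; when $f(\simplex_i) = f(\simplex_j)$ this forces $g(\simplex_i) = g(\simplex_j)$, contradicting injectivity of $g$, while otherwise it pins down a single bad value of $n$ per pair $(i,j)$. Discarding the finitely many bad indices, $f_n$ is injective for all sufficiently large $n$, concluding the argument. There is no serious obstacle here; the one piece of care is designing the perturbation so that filter monotonicity, injectivity, and values in $(0,1)$ are all preserved simultaneously, and the convex combination with a fixed injective filter $g$ delivers all three at once.
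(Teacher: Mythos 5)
Your proof is correct but takes a genuinely different route from the paper's. You perturb the filter $f$ directly inside $\Filt_\SComplex$ via a convex combination $f_n = (1-\tfrac1n)f + \tfrac1n g$ with a fixed injective filter $g$ valued in $(0,1)$, check that $f_n$ is an injective filter with values in $(0,1)$ for all large $n$, and invoke continuity of $\persmap$ (Theorem~\ref{theorem_stability}) to conclude $\persmap(f_n)\to D$ with each $\persmap(f_n)\in \Barc_\SComplex^{\mathrm{top}}$. The paper instead factors $f = \phi\circ g$ with $g$ injective (avoiding the endpoint values up to perturbation) and $\phi\in\NonDecMaps$, uses the equivariance of $\persmap$ (Lemma~\ref{lemma_commutes_PH_homeo}) to write $D = \phi.\persmap(g)$, and then appeals to the closure characterization of barcode strata under the $\NonDecMaps$-action (Lemma~\ref{lemma_closure_barcode_strata}). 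Your version is more elementary and self-contained — it needs only the definition of filters, convexity of $\Filt_\SComplex$, and Lipschitz continuity of $\persmap$, and does not depend on the stratification machinery — whereas the paper's version reuses the equivariance/closure toolkit it has just developed, exhibiting the result as a consequence of that framework rather than as an independent perturbation lemma. The one extra observation you make explicit, that $\CatBarc$ is closed (being the continuous image of a compact set into a metric space), is tacit in the paper's treatment but needed in both arguments; your inclusion of it is a small but welcome bit of completeness.
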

\begin{proof}
%The interval endpoints of a barcode~$D=\persmap(f)$ form a subset of the values of~$f$, hence~$\dim D\leq \sharp \SComplex$. The upper-bound is attained exactly when~$f$ is injective, as then each simplex~$\simplex$ enters alone in the associated sublevel set filtration hence induces change in homology. 

%It remains to show that~$\CatBarc=\overbar{\CatBarc^{\mathrm{top}}}$. 
Let $D=\persmap(f)$ be a barcode in the image. We can always factor~$f$ as~$f=\phi\circ g$ for some injective filter~$g\in \Filt_\SComplex$ and a non-decreasing map~$\phi\in \NonDecMaps$. By the equivariance of~$\persmap$ (Lemma~\ref{lemma_commutes_PH_homeo}), we have~$D=\phi.\persmap(g)$. Up to an arbitrarily small perturbation,~$g$ does not take the values~$0$ and~$1$, and  hence~$\persmap(g)$ is an element in a top dimensional  stratum. The result then follows from Lemma~\ref{lemma_closure_barcode_strata}.
\end{proof}
The space $\Barc_\SComplex$ can thus be built as a quotient of a finite collection of 
(closed) simplices  corresponding to the top dimensional barcode strata where some of the faces may be identified to each other and where $i$-dimensional faces may be reduced to a
$j$-dimensional simplex through collapsing $(i-j)$-dimensional affine subspaces (corresponding to bars of length zero). The following example illustrates this.
\begin{example}
\label{example_line_complex}
Let~$\SComplex$ represent the unit interval with vertices~$a,b$ and 1-simplex~$\sigma$.
The space of filters consists of two 3-dimensional strata corresponding to the induced orderings~$a<b<\sigma$ and~$b<a<\sigma$, which are mapped to each other via the action on~$\SComplex $ given by the  involution~$(a,b)$ on its set of vertices. All faces are included in~$\Filt_\SComplex$ and the two simplices are glued together along their common face corresponding to $a=b \leq \sigma$. Under the map~$\persmap$ the two 3-simplices are identified to one 3-simplex giving a unique top dimensional stratum~$\StratumD _{\mathrm{top}}$ in the space of barcodes~$\Barc_\SComplex $ parametrising barcodes of the form $\{(x_1, \infty), (x_2, x_3)\}$ with $0< x_1< x_2 < x_3 < 1$. The barcode space~$\Barc_\SComplex$ can then be identified with the quotient space of the closed 3-simplex where the  2-dimensional face corresponding to $0\leq x_1 < x_2=x_3 \leq 1$ is collapsed to the line segment $0\leq x_1=x_2=x_3 \leq 1$, i.e. we identify:
\[
(x_1, x_2, x_3 ) \sim (x_1, x_2', x_3') \quad \text{ whenever }  x_2 =x_3 \text{ and } x_2'= x_3'.
\]
See Figure~\ref{fig:image_line_complex}.
We thus see that the fiber $\persmap^{-1} (D)$ of a barcode $D = \{(x_1, \infty), (x_2, x_3)\} \in \Barc_\SComplex$ consists of two points if $x_1 <x_2 <x_3$, of one point if $x_1=x_2 < x_3$, and of an interval (consisting of  two intervals glued together) if $x_1 < x_2=x_3$.
\begin{figure}[ht]
\centering
\includegraphics[width=0.3\textwidth]{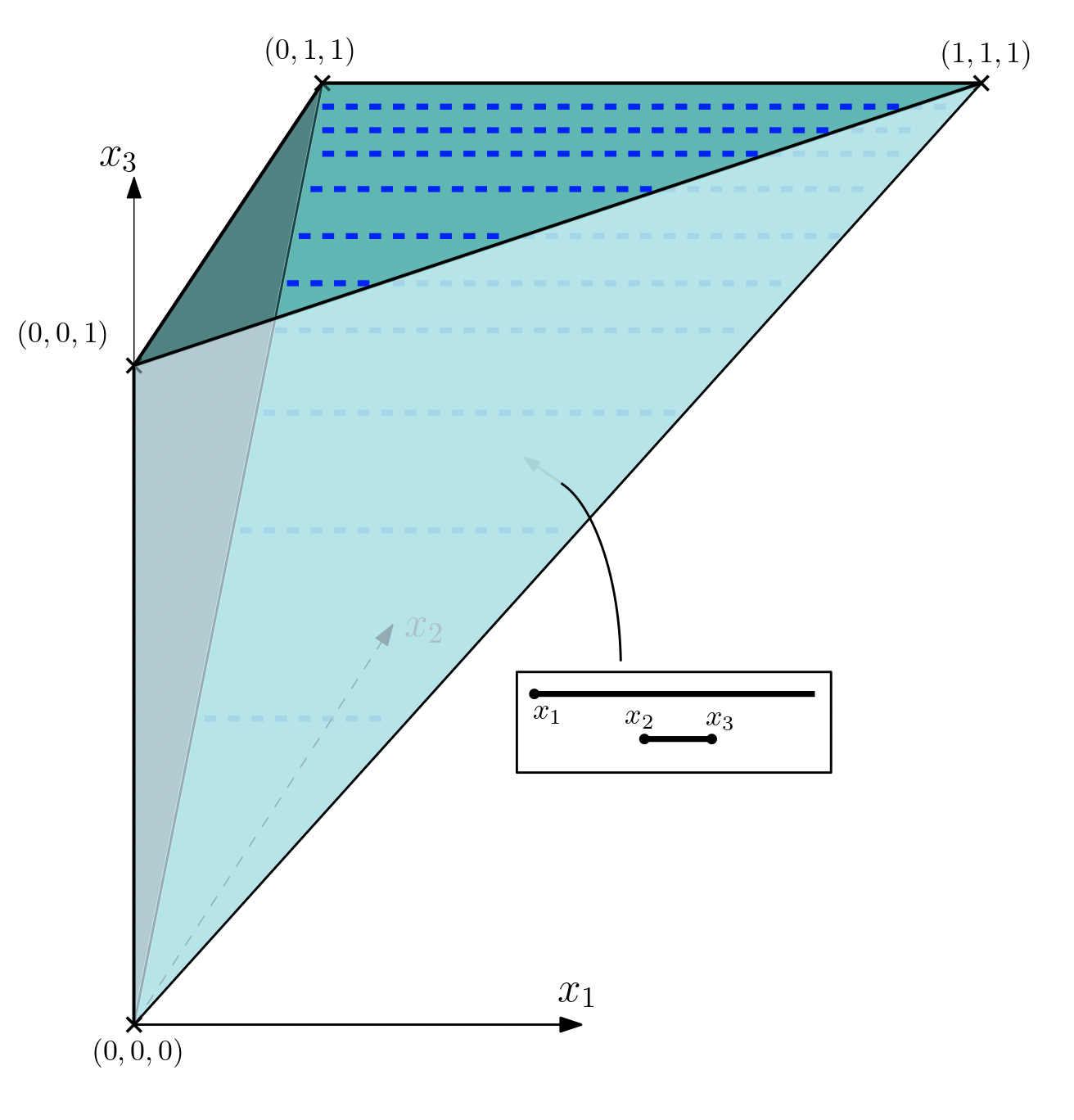}
\caption{When $\SComplex$ is the simplicial interval, the space $\Barc_\SComplex$ is the quotient space of the closed 3-simplex with each dotted line on the back face collapsed to one point. 
}
\label{fig:image_line_complex}
\end{figure}
\end{example}

\newpage
\section{The persistence map as a polyhedral stratified fiber bundle}
Although the persistence map~$\persmap$ is globally not a fibration, we show in section~\ref{sec:theorem_1} that it is a trivial fibration over each barcode stratum with a polyhedral complex as fiber. Using this structure in section~\ref{sec:topology_prediction}, we derive topological properties of the fiber.
%such as an upper-bound on the dimension of the polyhedra constituting the fiber.

%
\subsection{Polyhedral structure on the fiber}
\label{sec:theorem_1}
We strengthen Propositions~\ref{proposition_PH_sends_strata_to_strata} and~\ref{proposition_same_fiber_common_stratum}, showing that, over each barcode stratum in the image~$\CatBarc=\persmap(\Filt_\SComplex)$, the persistence map is a trivial fiber bundle with a polyhedral complex as fiber. The intuition behind this result is that~$\persmap$ can be viewed as a piecewise linear projection as follows. Given a filter stratum $\StratumF\subseteq \Filt_\SComplex$ and the barcode stratum $\StratumD=\persmap(\StratumF)$, the restriction $\persmap_{|\StratumF}$ can also be described as a map $\pi_{\StratumF}^{\StratumD}:\mathring{\StandSimplex}^{\dim \StratumF}\rightarrow \mathring{\StandSimplex}^{\dim \StratumD}$ via the coordinate charts~$\ChartFilt{}: \mathring{\StandSimplex}^{\dim \StratumF} \overset\cong \longrightarrow \StratumF $ and~$\ChartBarc: \mathring{\StandSimplex}^{\dim \StratumD} \overset\cong \longrightarrow  \StratumD $ given by Eq.~\eqref{eq_coord_filter} and Eq.~\eqref{eq_coord_barcode}:
\begin{equation}
\label{eq_persmap_projection}
\begin{tikzpicture}

\node[] (S) at (-2,2) {$\mathring{\StandSimplex}^{\dim \StratumF}$};
\node[] (D) at (-2,0) { $\mathring{\StandSimplex}^{\dim \StratumD}$};

\node[] (DeltaS) at (0,2) {$\StratumF$};
\node[] (DeltaD) at (0,0) {$\StratumD$};

\draw[->,dashed] (S)--(D) node[midway, left] {$\pi_{\StratumF}^{\StratumD}$};
\draw[->] (DeltaS)--(DeltaD) node[midway, right] {$\persmap$};
\draw[->] (S)--(DeltaS) node[midway, above]{$\ChartFilt$};
\draw[->] (D)--(DeltaD) node[midway, above]{$\ChartBarc$};
\end{tikzpicture}
\end{equation}
The map~$\pi_{\StratumF}^{\StratumD}$ is the linear projection from~$\R^{\dim \StratumF}$ to~$\R^{\dim \StratumD}$ that records the values of a filter~$f$ which are bounded interval endpoints in the associated barcode~$\persmap(f)$. That~$\persmap$ can be described by this diagram follows from the following two elementary observations: (i)~$\pi_{\StratumF}^{\StratumD}$ is $\IncHomSpace$-equivariant since~$\ChartFilt$ and~$\ChartBarc$ are equivariant, where $\IncHomSpace$ acts on~$\mathring{\StandSimplex}^{\dim \StratumF}$ and~$\mathring{\StandSimplex}^{\dim \StratumD}$ coordinate-wise; and (ii) the set of bounded interval endpoints in the barcode~$\persmap$ is a subset of the values of~$f$.

The fiber of such a projection map, restricted to the polyhedron~$\mathring{\StandSimplex}^{\dim \StratumF} \cong \StratumF$, is itself a polyhedron. In this section, we glue the polyhedra obtained in this way over the various filter strata~$\StratumF$ in order to describe the whole fiber of~$\persmap$ over a barcode stratum as a complex of polyhedra.

Recall that a {\em (bounded) polyhedron} is a bounded, finite intersection of closed half-spaces in a Euclidean space. The {\em dimension} of a polyhedron is the dimension of its affine hull. A {\em face} of a polyhedron~$\Polytope$ is the intersection of~$\Polytope$ with a supporting hyperplane, and is itself a polyhedron. In particular, a polyhedron~$\Polytope$ is a bounded convex Euclidean set. For such sets, we have the notions of relative interior and relative boundary, which offer the advantage not to depend on the ambient Euclidean space, and with a slight abuse of notations we denote them by~$\mathring{ \Polytope}$ and~$\rb \Polytope$ respectively.

\begin{definition}
\label{definition_polyhedral_complex}
A {\em polyhedral complex} is a finite set~$\PolyComplex$ of polyhedra in some Euclidean space~$\R^n$, such that (i) if~$F$ is a face of $\Polytope\in \PolyComplex$, then $F\in \PolyComplex$, and (ii) for all~$\Polytope,\Polytope' \in \PolyComplex$, the intersection~$\Polytope\cap \Polytope'$ is either empty or is a face of both~$\Polytope$ and~$\Polytope'$. By convention, the empty set is in~$\PolyComplex$. The {\em support} of~$\PolyComplex$ is $\bigcup_{\Polytope\in \PolyComplex} \Polytope \subseteq \R^n$. The {\em dimension} of a polyhedral complex is the maximal dimension of its polyhedra.
\end{definition} 
A map of polyhedral complexes, or {\em polyhedral map}, is a map that sends a polyhedron of the domain to a polyhedron of the co-domain surjectively, and whose restriction to each polyhedron $\Polytope$ of the domain is affine (i.e. can be extended to an affine map to the ambient space of $\Polytope$). In the case where the polyhedra are simplicial complexes, the notion of polyhedral map coïncides with that of simplicial map, in that it is induced by a map defined on the abstract simplicial complexes. More generally, polyhedral complexes can be thought of as geometric realisations of simplicial complexes. In fact, it is a standard fact that a polyhedral complex admits a finite triangulation on the same set of vertices. For the proof of this result and more about polyhedral geometry, we refer the reader to~\cite{gubeladze2009polytopes}.

\begin{theorem}
\label{theorem_fiber_bundle_polyhedral}
Let $\StratumD \subseteq \CatBarc$ be a barcode stratum in the image of $\persmap$, and $D\in \StratumD$ be any barcode. For each filter stratum $\StratumF\subseteq \Filt_\SComplex\cap \persmap^{-1}(\StratumD)$, let 
\[\PolytopeClosed=\overbar{\persmap^{-1}(D)\cap \StratumF} \subseteq \R^K\]
be the (closure of the) restriction to the stratum~$\StratumF$ of the fiber of the persistence map over~$D$. Then:
\begin{itemize}
\item[\bf{(a)}] Each~$\PolytopeClosed$ is a polyhedron in~$\R^K$ of dimension $\dim \StratumF-\dim \StratumD$, and is affinely isomorphic to the product of~$\dim \StratumD+1$ standard simplices (of various dimensions):
\[\StandSimplex_0\times \StandSimplex_1\times \StandSimplex_2 \times \cdots \times \StandSimplex_{\dim \StratumD -1} \times \StandSimplex_{\dim \StratumD}. \]
Moreover, the relative interior of~$\PolytopeClosed$ is~$\PolytopeOpen$;

\item[\bf{(b)}] The fiber $\persmap^{-1}(D)$ is the support of the polyhedral complex
\[\bigg\{\PolytopeClosed \, | \,  \StratumF \text{ filter stratum } \bigg\};\] 

\item[\bf{(c)}] There is a homeomorphism~$\PolyMap$ giving the following commutative diagram:
\begin{equation}
\label{eq_diagram_c_theorem_1}
\begin{tikzpicture}
\node[] (a) at (2.5,-3) {$\StratumD$};
\node[] (b) at (0,-1) {$\StratumD \times \persmap^{-1}(D)$};

\node[] (e) at (5,-1) {$\persmap^{-1}(\StratumD)$};
\draw[->] (b)--(e) node[midway, above] {$\PolyMap$};

\draw[->] (b)--(a) node[midway, left] {$\pi_1$};
\draw[->] (e)--(a) node[midway, right] {$\persmap$};

\end{tikzpicture}
\end{equation}
where $\pi_1$ denotes the projection onto the first factor. Additionally, for any barcode~$D'\in \StratumD$ and filter stratum~$\StratumF$, the restricted map $\PolyMap(D',.)_{|\StratumF}$ is an affine isomorphism between $\PolytopeOpen$ and $\persmap_{|\StratumF}^{-1}(D')$. In particular, the polyhedral structure of $\persmap^{-1}(D)$ is the same for all barcodes in~$\StratumD$.
\end{itemize}
\end{theorem}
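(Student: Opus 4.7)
The plan is to reduce everything to the description of $\persmap$ given in diagram~\eqref{eq_persmap_projection}: when a filter stratum $\StratumF$ is parametrized via $\ChartFilt$ by the ordered distinct values $(x_1, \ldots, x_{\dim \StratumF})$ of a filter and $\StratumD$ by the increasing endpoint sequence $(y_1, \ldots, y_{\dim \StratumD})$, the persistence map becomes a coordinate projection $\pi_{\StratumF}^{\StratumD}$ retaining exactly those $x_i$ that appear as bounded barcode endpoints. The set of retained indices $i_1 < \cdots < i_{\dim \StratumD}$ depends only on $\StratumF$, since any two filters in the same stratum induce the same pre-order on $\SComplex$ and hence the same pairing of simplices into bars.

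For part \textbf{(a)}, I would fix $D = \ChartBarc(y_1, \ldots, y_{\dim \StratumD})$ and solve $\pi_{\StratumF}^{\StratumD}(x) = y$. Setting $y_0 := 0$ and $y_{\dim \StratumD + 1} := 1$, the fiber consists of tuples with $x_{i_k} = y_k$ where the $m_k$ free coordinates lying strictly between the indices $i_k$ and $i_{k+1}$ form an increasing sequence in the open interval $(y_k, y_{k+1})$. These constraints decouple across the $\dim \StratumD + 1$ intervals, so the fiber is a product $\mathring{\StandSimplex}_0 \times \cdots \times \mathring{\StandSimplex}_{\dim \StratumD}$ of open standard simplices with $\sum_k m_k = \dim \StratumF - \dim \StratumD$. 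Closing this up in $\R^\SComplex$ yields a product of closed standard simplices whose relative interior is exactly $\PolytopeOpen$, as claimed.

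For part \textbf{(b)}, I would first note that every $f \in \persmap^{-1}(D)$ belongs to some filter stratum $\StratumF'$ with $\persmap(\StratumF') \ni D$, so by Proposition~\ref{proposition_PH_sends_strata_to_strata} we have $\persmap(\StratumF') = \StratumD$ and $f$ lies in $\PolytopeClosed[\StratumF']$. To verify the polyhedral complex axioms, I would observe that the faces of $\PolytopeClosed$ are obtained by collapsing pairs of adjacent coordinates of one of the $\mathring{\StandSimplex}_k$, which corresponds to moving to a boundary filter stratum $\StratumF' \subset \overline{\StratumF}$; continuity of $\persmap$ applied to paths approaching this boundary from inside $\persmap_{|\StratumF}^{-1}(D)$ ensures $\persmap(\StratumF') \ni D$, hence $\persmap(\StratumF') = \StratumD$, so $\StratumF'$ contributes its own polyhedron and the face of $\PolytopeClosed$ coincides with $\overline{\persmap_{|\StratumF'}^{-1}(D)}$. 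Intersections $\PolytopeClosed \cap \PolytopeClosed[\StratumF']$ are handled by the same analysis applied to common boundary strata.

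For part \textbf{(c)}, I would define $\PolyMap(D', f) := \phi_{D,D'} \circ f$ where $\phi_{D,D'} \in \IncHomSpace$ is the canonical piecewise linear homeomorphism of $\I$ that fixes $0$ and $1$ and sends each endpoint $y_k$ of $D$ to the corresponding endpoint $y'_k$ of $D'$. Equivariance (Lemma~\ref{lemma_commutes_PH_homeo}) gives $\persmap(\PolyMap(D', f)) = \phi_{D,D'}.D = D'$, and the inverse $f' \mapsto \bigl(\persmap(f'),\, \phi_{D,\persmap(f')}^{-1} \circ f'\bigr)$ is continuous, so $\PolyMap$ is a homeomorphism. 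Since $\phi_{D,D'}$ is affine on each $[y_k, y_{k+1}]$ and every free coordinate of $f \in \PolytopeOpen$ stays within one such subinterval, the restriction $\PolyMap(D', \cdot)_{|\StratumF}$ is affine onto $\persmap_{|\StratumF}^{-1}(D')$. The main obstacle I anticipate is the verification in (b) that collapsing a free coordinate genuinely preserves the barcode rather than dropping to a smaller barcode stratum; this is secured by combining continuity of $\persmap$ with the strong stratification property of Proposition~\ref{proposition_PH_sends_strata_to_strata}.
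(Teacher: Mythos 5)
Your proposal follows the paper's own route essentially step by step: represent $\persmap|_{\StratumF}$ as the coordinate projection $\pi_{\StratumF}^{\StratumD}$ through the charts $\ChartFilt$ and $\ChartBarc$, identify the fiber with a product of open standard simplices and pass to closures for~\textbf{(a)}, match faces with boundary filter strata for~\textbf{(b)}, and trivialize via post-composition with the endpoint-interpolating piecewise-linear homeomorphism for~\textbf{(c)}. Part~\textbf{(c)} in particular is identical to the paper's up to the notational convention $\phi_{D,D'}$ versus $\phi_{D'}^{-1}$.

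There is, however, a gap in your argument for~\textbf{(b)}. You correctly observe that a face $F$ of $\PolytopeClosed$ has relative interior in some boundary stratum $\StratumF'\subseteq\rb\StratumF$ and hence $F\subseteq\overbar{\persmap_{|\StratumF'}^{-1}}(D)$, and you then assert the two sets coincide. The reverse inclusion $\overbar{\persmap_{|\StratumF'}^{-1}}(D)\subseteq F$ is not automatic: a priori $\overbar{\persmap_{|\StratumF'}^{-1}}(D)$ could spill out of the face $F$. The paper closes this by first proving $\overbar{\persmap_{|\StratumF'}^{-1}}(D)\subseteq\rb\PolytopeClosed$ (its Eq.~\eqref{eq_boundary_polytope_theorem_1}, via the straight-line/convexity argument in the chart), then using convexity again to place $\overbar{\persmap_{|\StratumF'}^{-1}}(D)$ inside a single proper face, and finally concluding equality since $F\subseteq\overbar{\persmap_{|\StratumF'}^{-1}}(D)$ and both are proper faces. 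Likewise, the intersection axiom of a polyhedral complex is not discharged merely by "the same analysis applied to common boundary strata''; the paper handles it by a two-case induction (on codimension when one stratum bounds the other, and on polyhedron dimension when the two polyhedra meet only at their relative boundaries). The ingredients you name suffice to fill these holes, but as written the argument for~\textbf{(b)} does not establish the polyhedral-complex structure.
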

In assertion {\bf (a)} above, the subcript in~$\StandSimplex_i$ is used as an index to order a list of standard simplices. This should not be  confused with the notation~$\StandSimplex^i$ where the superscript denotes the dimension of  the standard simplex.
In Fig.~\ref{fig:fiber_line_complex} we illustrate this part of the theorem and include a sample computation for $\dim \StratumF-\dim \StratumD$.
\begin{figure}[ht]
\centering
\includegraphics[width=0.7\textwidth]{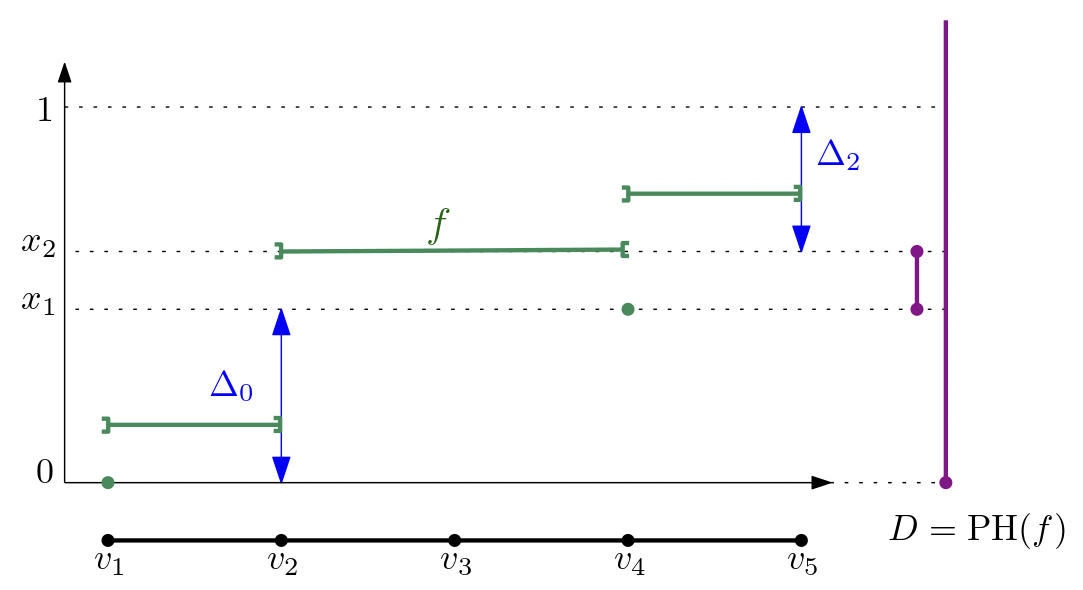}
\caption{The filter~$f$ (green) on the complex obtained from subdividing the unit interval with five vertices yields the $2$-dimensional barcode~$D$ (purple). The stratum~$\StratumF$ of~$f$ contains those filters which assign values in strictly increasing order to the vertices $v_1,v_2,v_4,v_3$ and~$v_5$ and whose values over edges $(v_i,v_{i+1})$ are the maxima of their values over the endpoints~$v_i$ and~$v_{i+1}$. The filters in the restricted fiber~$\PolytopeClosed$ must fix the equalities~$f(v_1)=0$,~$f(v_3)=x_2$ and~$f(v_4)=x_1$, but we may vary $f(v_2)\in [0,x_1]$ and $f(v_5)\in [x_2,1]$. These two degrees of freedom correspond to the two simplices~$\StandSimplex_0$ and~$\StandSimplex_2$, which are in this case of dimension~$1$, whereas~$\StandSimplex_1$ is a singleton as no value of~$f$ lies in~$(x_1,x_2)$. We thus get the affine isomorphism between~$\PolytopeClosed$ and the product $\StandSimplex_0 \times \StandSimplex_1 \times \StandSimplex_2$ of Theorem~\ref{theorem_fiber_bundle_polyhedral}. In particular, we check that the dimension of the polyhedron~$\PolytopeClosed$ is~$2=5-3= \dim \StratumF- \dim \StratumD_D$.}
\label{fig:fiber_line_complex}
\end{figure}
\begin{proof}
Recall from Eq.~\eqref{eq_persmap_projection} that the persistence map rewrites as the projection map $\pi_{\StratumF}^{\StratumD}:\mathring{\StandSimplex}^{\dim \StratumF} \rightarrow \mathring{\StandSimplex}^{\dim \StratumD}$ onto the~$\dim \StratumD$ consecutive filter values that modify the homology groups of the sublevel sets, and that the fibers of~$\pi_{\StratumF}^{\StratumD}$ are polyhedra. More precisely, let~$(x_1,\cdots,x_{\dim \StratumD}):=\ChartBarc^{-1}(D)$ be the consecutive endpoints of~$D$, and let~$1\leq i_1<\cdots <i_{\dim \StratumD}\leq  \dim\StratumF$ be the projection coordinates of~$\pi_{\StratumF}^{\StratumD}$. Then:
\begin{align*}
(\pi_{\StratumF}^{\StratumD})^{-1}(\ChartBarc^{-1}(D)) &= \big\{ (x'_i)_{i=1}^{\dim \StratumF} \in \mathring{\StandSimplex}^{\dim \StratumF} \mid \, x'_{i_1}=x_1, \cdots, x'_{i_{\dim \StratumD}} = x_{\dim \StratumD}\big\}\nonumber \\
&=\big\{ (x'_i)_{i=1}^{\dim \StratumF} \in \R^{\dim \StratumF} \mid \,(0< x'_1 < \cdots< x'_{i_1}=x_1) \cap \cdots \cap (x_{\dim \StratumD}=x'_{i_{\dim \StratumD}} < \cdots< x'_{\dim \StratumF}< 1)\big\}\nonumber\\ 
& \cong \mathring{\StandSimplex}^{i_1} \times \cdots \times \mathring{\StandSimplex}^{\dim \StratumF- i_{\dim \StratumD}},
\end{align*}
where the last homeomorphism is an affine isomorphism. Therefore the restricted fiber $\persmap^{-1}(D)\cap \StratumF$ is affinely isomorphic to the product 
\begin{equation}
\label{eq_2_theorem1}
\persmap^{-1}(D)\cap \StratumF \cong (\pi_{\StratumF}^{\StratumD})^{-1}(\ChartBarc^{-1}(D))\cong \mathring{\StandSimplex}_0\times \mathring{\StandSimplex}_1\times \mathring{\StandSimplex}_2 \times \cdots \times \mathring{\StandSimplex}_{\dim \StratumD -1} \times \mathring{\StandSimplex}_{\dim \StratumD} 
\end{equation}
of open simplices~$\mathring{\StandSimplex}_i$ whose dimensions sum up to~$\dim \StratumF- \dim \StratumD$, where the open simplex~$\mathring{\StandSimplex}_i$ corresponds to values in-between~$x_i$ and~$x_{i+1}$, the $i$-th and $i+1$-th endpoint values in~$D$ (recall the convention that~$x_0=0$ and~$x_{\dim D +1}=1$). The linear isomorphism~$\ChartFilt$ extends to a stratified linear isomorphism between the closed simplex~$\StandSimplex^{\dim \StratumF}$ and~$\overbar{\StratumF}$. Therefore, the homeomorphism in Eq.~\eqref{eq_2_theorem1} extends to the closure and yields the affine homeomorphism:
\begin{equation}
\label{eq_3_theorem1}
\overbar{\persmap^{-1}(D)\cap \StratumF} \cong \StandSimplex_0\times \StandSimplex_1\times \StandSimplex_2 \times \cdots \times \StandSimplex_{\dim \StratumD -1} \times \StandSimplex_{\dim \StratumD}. 
\end{equation}
This proves assertion {\bf (a)}. We can also deduce from the previous arguments that if $\StratumF'\subseteq \rb \StratumF$ is a boundary stratum, then 
\begin{equation}
\label{eq_boundary_polytope_theorem_1}
\overbar{\persmap_{|\StratumF'}^{-1}}(D) \subseteq \rb \PolytopeClosed.
\end{equation}
To see this, let us fix $f\in \StratumF \cap \persmap^{-1}(D)$ and let~$f'$ be an arbitrary filter in~$\overbar{\persmap_{|\StratumF'}^{-1}}(D)$. Then the respective coordinates~$\ChartFilt^{-1}(f)$ and~$\ChartFilt^{-1}(f')$ are in the fiber of~$\pi_{\StratumF}^{\StratumD}$ over~$\ChartBarc^{-1}(D)$, therefore so is the straight line $[\ChartFilt^{-1}(f),\ChartFilt^{-1}(f')]$ by linearity of the projection. Since $\ChartFilt([\ChartFilt^{-1}(f),\ChartFilt^{-1}(f')))\subseteq \StratumF$, we have $\ChartFilt([\ChartFilt^{-1}(f),\ChartFilt^{-1}(f')))\subseteq \PolytopeOpen$ and we deduce that $f'\in \PolytopeClosed$, and consequently $\overbar{\persmap_{|\StratumF'}^{-1}}(D) \subseteq \PolytopeClosed$. Since $\StratumF'\subseteq \rb \StratumF$, the polyhedron $\overbar{\persmap_{|\StratumF'}^{-1}}(D)$ in fact lies in the relative boundary of~$\PolytopeClosed$.

We now show assertion {\bf (b)}, namely that the set of polyhedra~$\PolytopeClosed$ is a polyhedral complex. So we first show that if~$\PolytopeClosed$ is a polyhedron and~$F\subset \PolytopeClosed$ is one of its face, then $F$ is of the form~$\overbar{\persmap_{|\StratumF'}^{-1}}(D)$. We assume that~$F$ is a proper face, i.e. has codimension~$1$ in~$\PolytopeClosed$. The general case follows by induction on the codimension. 

The restriction of the affine homeomorphism of Eq.~\eqref{eq_3_theorem1} to the face $F$ implies that $F$ is affinely isomorphic to the product
\begin{equation}
\label{eq_4_theorem1}
F \cong \StandSimplex_0\times \StandSimplex_1\times \StandSimplex_2 \times \cdots \times \StandSimplex'_i \times \cdots \times \StandSimplex_{\dim \StratumD -1} \times \StandSimplex_{\dim \StratumD},
\end{equation}
where $\StandSimplex'_i$ is a proper face of~$\StandSimplex_i$. Note that~$\StandSimplex'_i$ is obtained by replacing one inequality between consecutive coordinates in~$\StandSimplex_i$ with an equality. This uniquely defines a stratum $\StratumF'\subseteq \rb \StratumF$ of codimension~$1$ in~$\StratumF$ such that $\mathring{F}\subseteq \StratumF'$. Since $F\subseteq \persmap^{-1}(D)$, we have $F\subseteq \overbar{\persmap_{|\StratumF'}^{-1}}(D)$. However, by Eq.~\eqref{eq_boundary_polytope_theorem_1}, the polyhedron~$\overbar{\persmap_{|\StratumF'}^{-1}}(D)$ lies inside the relative boundary of~$\PolytopeClosed$, and in fact by convexity, inside a proper face of~$\PolytopeClosed$. Since $F\subseteq \overbar{\persmap_{|\StratumF'}^{-1}}(D)$ is itself a proper face, we deduce that $F=\overbar{\persmap_{|\StratumF'}^{-1}}(D)$, as desired.  

We now show that a non-empty intersection $\PolytopeClosed\cap \overbar{\persmap_{|\StratumF'}^{-1}}(D)$ of two polyhedra in the fiber is a common face of $\PolytopeClosed$ and $\overbar{\persmap_{|\StratumF'}^{-1}}(D)$. If $\StratumF=\StratumF'$ there is nothing to prove. Otherwise, $\StratumF\cap \StratumF'=\emptyset$ so that $\PolytopeOpen \cap \persmap_{|\StratumF'}^{-1}(D)=\emptyset$. We consider two cases:
\begin{enumerate}
\item Either $\StratumF'\subseteq \rb \StratumF$, and then $\overbar{\persmap_{|\StratumF'}^{-1}}(D)$ lies in a proper face $\overbar{\persmap_{|\StratumF''}^{-1}}(D)$ of $\PolytopeClosed$, and we must have~$\StratumF'\subseteq \rb \StratumF''$. Thus we are done by an induction on the codimension of~$\StratumF'$ in~$\StratumF$. Similarly if~$\StratumF\subseteq \rb \StratumF'$;
\item Or $\PolytopeClosed$ and $\overbar{\persmap_{|\StratumF'}^{-1}}(D)$ intersect only at their relative boundaries. In this case, by convexity of the two polyhedra, their intersection is the intersection $\overbar{\persmap_{|\StratumF_2}^{-1}}(D)\cap \overbar{\persmap_{|\StratumF'_2}^{-1}}(D)$ of some proper faces $\overbar{\persmap_{|\StratumF_2}^{-1}}(D)\subseteq \PolytopeClosed$ and $\overbar{\persmap_{|\StratumF'_2}^{-1}}(D)\subseteq \overbar{\persmap_{|\StratumF'}^{-1}}(D)$. We are then left with the initial problem with polyhedra of smaller dimensions. We then conclude via an induction on the dimension of the polyhedra.
\end{enumerate}

We now address the proof of assertion {\bf (c)}. Given a barcode $D'\in \StratumD$, define $\phi_{D'}: \I \to \I$ to be the unique piecewise linear interpolation of the increasing map that takes the (bounded) endpoints of $D'$ to the (bounded) endpoints of~$D$, further fixing~$0$ and~$1$. Clearly,~$\phi_{D'}$ is an orientation preserving homeomorphism. From the homeomorphism~$\ChartBarc:  \mathring{\StandSimplex}^{\dim \StratumD} \overset\cong \longrightarrow \StratumD$, the intervals' endpoints in a barcode~$D'$ vary continuously with~$D'\in \StratumD$. Therefore~$D'\mapsto \phi_{D'}$ is continuous, and in turn the map 
\begin{equation}
\label{equation_Phi}
\PolyMap: (D',f)\in \StratumD \times \persmap^{-1}(D) \mapsto \phi_{D'}^{-1}\circ f \in \persmap^{-1}(\StratumD)
\end{equation}
is continuous. Similarly, its inverse
given by $\PolyMap^{-1} (f') = 
(\persmap(f'), \phi _{\persmap(f')} \circ f')$ is continuous, so that $\PolyMap$ is a homeomorphism.
Using Lemma~\ref{lemma_commutes_PH_homeo}, we have
$\persmap\circ \PolyMap = \pi_1$, i.e. the diagram in Eq.~\eqref{eq_diagram_c_theorem_1} commutes.
%
\begin{comment}
\begin{center}
\begin{tikzpicture}
\node[] (a) at (2.5,-3) {$\StratumD$};
\node[] (b) at (0,-1) {$\StratumD \times \persmap^{-1}(D)$};

\node[] (e) at (5,-1) {$\persmap^{-1}(\StratumD)$};
\draw[->] (b)--(e) node[midway, above] {$\PolyMap$};

\draw[->] (b)--(a) node[midway, left] {$\pi_1$};
\draw[->] (e)--(a) node[midway, right] {$\persmap$};

\end{tikzpicture}
\end{center}
\end{comment}
%

Let $\StratumF\subseteq \persmap^{-1}(\StratumD)$ be a filter stratum in the fiber. For $D'\in \StratumD$, $\PolyMap(D',.)= \phi_{D'}^{-1}\circ\_$ is the post-composition by $\phi_{D'}^{-1}\in \IncHomSpace$, see Eq~\eqref{equation_Phi}, so by Lemma~\ref{lemma_commutes_PH_homeo}, the previous homeomorphism restricts to:
\begin{center}
\begin{tikzpicture}
\node[] (a) at (2.5,-3) {$\StratumD$};
\node[] (b) at (0,-1) {$\StratumD \times \PolytopeOpen$};

\node[] (e) at (5,-1) {$\StratumF$};
\draw[->] (b)--(e) node[midway, above] {$\PolyMap$};

\draw[->] (b)--(a) node[midway, left] {$\pi_1$};
\draw[->] (e)--(a) node[midway, right] {$\persmap$};
\end{tikzpicture}
\end{center}
To finish the proof, we show that the homeomorphism $\PolyMap(D',.):\persmap^{-1}(D)\cap \StratumF \rightarrow \persmap^{-1}(D')\cap \StratumF$ is the restriction of an affine endomorphism of~$\R^\SComplex$, for any barcode~$D'\in \StratumD$. Equivalently, we describe the coordinate functions $\PolyMap(D',.)_{\simplex}:f \mapsto \PolyMap(D',f)(\simplex)$ as affine forms, for each simplex $\simplex\in \SComplex$. Let~$(x'_1,\cdots,x'_{\dim \StratumD}):= \ChartBarc^{-1}(D')$, and let~$x'_{0}=0$ and~$x'_{\dim \StratumD+1}=1$ by convention. Given~$\simplex\in \SComplex$ and~$f\in \persmap^{-1}(D')\cap \StratumF$, there is an index~$0 \leqslant i \leqslant \dim\StratumD$ such that~$x'_{i}\leqslant f(\simplex)\leqslant x'_{i+1}$. If~$f'\in \persmap^{-1}(D')\cap \StratumF$ is another filter in the fiber, we have~$f'=\phi\circ f$ for some~$\phi \in \IncHomSpace$, and~$\phi.D'=D'$ by the equivariance Lemma~\ref{lemma_commutes_PH_homeo}, so that~$x'_{i}\leqslant f'(\simplex)\leqslant x'_{i+1}$ as well. Since $\phi_{D'}^{-1}$ is affine over $[x'_i,x'_{i+1}]$, we conclude that~$\PolyMap(D',.)_{\simplex}$ is the restriction of an affine map, as desired. 
\end{proof}
\subsection{Topology of the fiber}
\label{sec:topology_prediction}

In this section, we collect a few results that restrict the topology of the fiber of~$\persmap$ over a barcode~$D$ in the image~$\CatBarc=\persmap(\Filt_\SComplex)$. We make use of the previous sections, and in particular we derive a bound for the dimension of the polyhedra in the fiber~$\persmap^{-1}(D)$. 

With our first result we obtain finer control on the type of strata arising in~$\CatBarc$. 
Denote by
\[\mathrm{rk}(\partial_p) := \dim \text{Im}( \partial_p:C_p(\SComplex,\field)\rightarrow C_{p-1}(\SComplex,\field))\] 
the rank of the boundary map in the simplicial chain complex, and by $\beta_p(\SComplex):= \dim \mathrm{H}_p(\SComplex,\field)$ the $p$-th Betti  of $\SComplex$.
\begin{proposition}
\label{proposition_image_persmap_bounded_number_intervals}
Let $D= (D_0, \dots , D_d)\in \CatBarc$. Then, for any homology degree~ $0\leqslant p \leqslant d$:
\begin{itemize}
\item[{\bf(i)}] The number of infinite intervals in~$D_p$ equals~$\beta_p(\SComplex)$; and
\item[{\bf(ii)}] The number of bounded intervals in~$D_p$ is smaller than or equal to~$\mathrm{rk}(\partial_{p+1})$. 
\end{itemize} 
%
%Moreover, if~$D=\persmap(f)$ for some injective filter~$f$ with values in~$(0,1)$, then~$\dim D$ is maximal and~$\dim D=\sharp K$.
%
%In particular,~$\CatBarc$ is a stratified space with finitely many strata of dimension at most $\sharp K$. 
%
\end{proposition}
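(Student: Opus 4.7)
For part \textbf{(i)}, the plan is to evaluate the persistence module $\mathrm{H}_p(\SComplex(f))$ at $t=1$. Since every filter value lies in $\I=[0,1]$, we have $\SComplex(f)(1) = \SComplex$, so this evaluation yields $\mathrm{H}_p(\SComplex)$, whose dimension counts bars of $\persmap_p(f)$ alive at time $t=1$. Every interval endpoint of $\persmap_p(f)$ lies in $\I \cup \{\infty\}$, so bounded bars $[b,d)$ with $d \leqslant 1$ are dead at $t=1$, while infinite bars $[b,\infty)$ with $b \leqslant 1$ are alive. Hence the number of infinite $p$-bars equals $\dim \mathrm{H}_p(\SComplex) = \beta_p(\SComplex)$.

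For part \textbf{(ii)}, the strategy is to reduce to the case of an injective filter via the equivariance Lemma~\ref{lemma_commutes_PH_homeo}. Given an arbitrary filter $f$, I factor $f = \phi \circ g$ with $g$ injective and $\phi \in \NonDecMaps$ as follows: fix an ordering $\sigma_1, \ldots, \sigma_n$ of the simplices of $\SComplex$ compatible with both the values of $f$ (non-decreasing) and face inclusions, set $g(\sigma_i) := i/(n+1)$, and let $\phi$ be the piecewise linear map interpolating $i/(n+1) \mapsto f(\sigma_i)$ with $\phi(0)=0$ and $\phi(1)=1$. Then $g$ is an injective filter and $\phi\in\NonDecMaps$, so $\persmap_p(f) = \phi . \persmap_p(g)$. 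Since the action of $\phi$ sends each bounded bar of $\persmap_p(g)$ either to another bounded bar or collapses it to length zero (in which case it is removed, per Eq.~\eqref{eq_definition_action_barcodes}), the number of bounded bars of $\persmap_p(f)$ is at most the number of bounded bars of $\persmap_p(g)$.

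It thus suffices to prove the inequality for $g$ injective. Order the simplices by increasing $g$-value (compatible with face inclusions by monotonicity of $g$) and invoke the classical persistence pairing: a $(p+1)$-simplex $\sigma$ is labeled \emph{negative} if $\partial \sigma$ is not already a boundary in the sublevel set entered just before $\sigma$, and \emph{positive} otherwise. Each bounded $p$-bar of $\persmap_p(g)$ is terminated by the entrance of a unique negative $(p+1)$-simplex, and conversely, so the number of bounded $p$-bars equals the number of negative $(p+1)$-simplices. The key observation is that the boundaries $\{\partial \sigma_i : \sigma_i \text{ negative } (p+1)\text{-simplex}\}$ form a basis of $\mathrm{Im}(\partial_{p+1})$: they are linearly independent by the defining condition of negativity, and the boundary of each positive $(p+1)$-simplex lies (by induction on the ordering) in the span of earlier negative boundaries. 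Consequently this count equals $\mathrm{rk}(\partial_{p+1})$, which combined with the reduction above finishes the proof. The main technical hurdle is precisely this identification of the persistence pairing count with $\mathrm{rk}(\partial_{p+1})$; the rest is combinatorial bookkeeping and an appeal to equivariance.
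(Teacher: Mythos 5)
Your proof of part (i) is the same argument the paper uses (just more spelled out), and it is correct.

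For part (ii) you take a genuinely different route. The paper works directly with an arbitrary filter $f$: it invokes the Decomposition Theorem to extract representing cycles $c_{(b,d)}$ and bounds the number of intervals ending at $x_i$ by the rank increment $\mathrm{rk}\bigl((\partial_{p+1})_{\mid K(f)(x_i)}\bigr)-\mathrm{rk}\bigl((\partial_{p+1})_{\mid K(f)(x_{i-1})}\bigr)$, then sums. You instead first factor $f=\phi\circ g$ with $g$ injective and use Lemma~\ref{lemma_commutes_PH_homeo} to reduce to the injective case (precisely the technique the paper already uses in the proof of Proposition~\ref{proposition_image_PH_determined_by_topdim}), and there you apply the classical positive/negative simplex pairing and observe that boundaries of negative $(p+1)$-simplices form a basis of $\Image(\partial_{p+1})$. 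Both proofs ultimately count rank increments of $\partial_{p+1}$ along the filtration, but yours is more algorithmic and buys a sharper conclusion: for $g$ injective the count is an \emph{equality}, so the bound $\mathrm{rk}(\partial_{p+1})$ in (ii) is attained on every top-dimensional stratum and the inequality in general only reflects bars being collapsed by $\phi$. Your verification that $g$ is a filter and $\phi\in\NonDecMaps$ is correct (the ordering compatible with both $f$-values and face inclusions exists since a face has smaller $f$-value and strictly smaller dimension than its coface). One cosmetic point: you should note that $\partial\sigma$ is automatically a cycle, so "not already a boundary" is the entire content of negativity; as written this is implicit but worth making explicit to a reader not steeped in the persistence algorithm.
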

\begin{proof}
Item {\bf(i)} follows from the fact that infinite bars in~$D$ correspond to the homology of~$\SComplex$.

Let~$D=\persmap(f)$ be a barcode in the image, with endpoints~$0<x_1< \cdots < x_{\dim D}< 1$. By convention~$x_0=0$ and~$x_{\dim D+1}=1$. Via the Decomposition Theorem~\cite{crawley2015decomposition}, there is an isomorphism between the $p$-th persistent homology module~$\mathrm{H}_p\circ K(f)$ and~$\oplus_{(b,d)\in D_p} \mathbb{I}_{[b,d)}$.  Through such an isomorphism  we obtain a basis of homology classes~$[c_{(b,d)}]$. The representing simplicial chains~$c_{(b,d)}$ are linearly independent in $C_p(K)= K(f) (1)$ and are cycles in~$K(f)(t)$ for~$b\leq t < d$ that become boundaries in~$K(f)(d)$. In particular, the number of intervals~$(b,d)$ ending at~$x_i$, here~$i\geq 1$, is less or equal than~$\mathrm{rk}((\partial_{p+1})_{\mid K(f)(x_i)})-\mathrm{rk}((\partial_{p+1})_{\mid K(f)(x_{i-1})})$. Summing over~$i$ yields item {\bf(ii)}.
\end{proof}

Next we determine a bound for the dimension of the polyhedral complex~$\persmap^{-1}(D)$. Recall that barcode strata in~$\CatBarc$ have maximal dimension~$\sharp \SComplex$. Therefore by the {\bf(a)} of Theorem~\ref{theorem_fiber_bundle_polyhedral} we have the obvious bound~$\dim \persmap^{-1}(D)\leq \text{codim}(\StratumD{})$, where we define the {\em codimension} of a barcode stratum~$\StratumD{}$ as:
\begin{equation*}
 \text{codim}(\StratumD{}):=\sharp \SComplex-\text{dim}(\StratumD{})\geqslant 0.
\end{equation*} 
To improve this bound, we introduce the following quantity, which can be thought of as the number of missing bounded intervals in the target barcode~$D$. 
\begin{definition}
\label{definition_deficit}
Let~$\sharp \SComplex$ be the number of simplices in~$\SComplex$, and~$\sharp D$ be the number of interval endpoints in~$D$ with finite value (counted with multiplicities). The {\em bounded deficit} of~$D$ is the quantity:
\[\frac{\sharp \SComplex-\sharp D}{2}\geqslant 0.\]
\end{definition}
Note that the bounded deficit is the same for all barcodes inside a given stratum. 
\begin{proposition}
\label{proposition_improved_bounding_dimension_fiber}
For any barcode~$D\in \CatBarc$, the dimension of the fiber of~$\persmap$ over~$D$ is less than or equal to the bounded
deficit:
%by half the difference between the number of simplices in~$\SComplex$ and that of interval endpoints in~$D$ with finite value, counted with multiplicity:
%
\[\dim \persmap^{-1}(D) \leqslant \frac{\sharp \SComplex-\sharp D}{2}\leqslant \mathrm{codim}(D).\]
\end{proposition}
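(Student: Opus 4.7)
The plan is to reduce the statement to a counting argument at each filter stratum. By Theorem~\ref{theorem_fiber_bundle_polyhedral}\,{\bf(a)}, the fiber $\persmap^{-1}(D)$ is a finite polyhedral complex whose top-dimensional cells have the form $\overbar{\persmap_{|\StratumF}^{-1}}(D)$ and dimension $\dim \StratumF - \dim \StratumD$, where $\StratumD = \StratumD_D$ and $\StratumF$ ranges over the filter strata inside $\persmap^{-1}(\StratumD)$. It therefore suffices to show $\dim \StratumF - \dim \StratumD \leq (\sharp \SComplex - \sharp D)/2$ for every such $\StratumF$.

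Fix $f \in \StratumF$, and for every $v \in \I$ set $c_v := \sharp f^{-1}(\{v\})$ and let $\mu_v$ denote the total multiplicity (summed across homological degrees) of $v$ as a finite endpoint of $D = \persmap(f)$. The key step is the claim that $c_v - \mu_v$ is a non-negative even integer, and equals at least $2$ whenever $v \in (0,1)$ is a filter value not appearing as an endpoint of $D$. This uses the simplex pairing underlying the decomposition theorem for persistence modules: each simplex is either a creator or a destroyer of a homology class of $K(f)$, and the induced creator--destroyer pairing either contributes one endpoint at the value of each member (at most one per simplex, hence $\mu_v \leq c_v$), or, when the two paired simplices sit at the same value $v$, produces a length-zero bar that is suppressed from $D$. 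Simplices of the latter type come in pairs at $v$, so $c_v - \mu_v \in 2\N$, and if $v$ is a filter value with $\mu_v = 0$ then all $c_v$ simplices at $v$ are of this type, giving $c_v \geq 2$.

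Granted this claim, summing across all filter values yields $\sharp \SComplex - \sharp D = \sum_v (c_v - \mu_v)$. Dropping the non-negative contributions at $v \in \{0,1\}$ and at endpoints of $D$ in $(0,1)$, and retaining only the $\dim \StratumF - \dim \StratumD$ values $v \in (0,1)$ with $\mu_v = 0$ (each contributing at least $2$ by the above) produces
\[
\sharp \SComplex - \sharp D \;\geq\; 2\,(\dim \StratumF - \dim \StratumD),
\]
which is the first inequality of the proposition after dividing by $2$ and taking the maximum over~$\StratumF$. For the second inequality, since $\sharp D$ counts finite endpoints with multiplicity while $\dim \StratumD$ counts only distinct endpoints in $(0,1)$ we have $\dim \StratumD \leq \sharp D \leq \sharp \SComplex$, so $2 \dim \StratumD \leq \sharp D + \sharp \SComplex$, which rearranges into $(\sharp \SComplex - \sharp D)/2 \leq \sharp \SComplex - \dim \StratumD = \mathrm{codim}(\StratumD)$.

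The main obstacle is making the parity claim $c_v - \mu_v \in 2\N$ fully rigorous; while standard from the matrix-reduction viewpoint on persistent homology, one has to sum contributions carefully across all homological degrees and to treat the boundary values $0,1$ (where simplices of $f^{-1}(\{0,1\})$ may likewise pair among themselves) on the same footing as interior values.
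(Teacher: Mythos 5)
Your proposal is correct and is structurally the same as the paper's: both reduce to proving, for each filter stratum $\StratumF\subseteq\persmap^{-1}(\StratumD)$, the inequality $\dim\StratumF-\dim\StratumD\leqslant(\sharp\SComplex-\sharp D)/2$ by comparing the number of simplices at each filter value against the number of finite barcode endpoints at that value, and then both deduce the easy inequality $(\sharp\SComplex-\sharp D)/2\leqslant\mathrm{codim}(D)$ from $\dim D\leqslant\sharp D$ and $\dim D\leqslant\sharp\SComplex$. The one genuine difference is in the key counting step showing that each filter value $v\in(0,1)$ which is not an endpoint of $D$ is attained by at least two simplices. You derive this from the parity of $c_v-\mu_v$ via the creator--destroyer pairing, which is a correct but heavier route (and, as you note yourself, would need some care to set up rigorously, e.g. fixing a choice of pairing, handling all homological degrees simultaneously, and checking multiplicities at $0$ and $1$). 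The paper instead uses an Euler characteristic observation: if $f^{-1}(\{v\})$ were a single simplex $\simplex$, then $\chi$ of the sublevel sets would jump by $(-1)^{\dim\simplex}\neq 0$ at $v$, so some Betti number changes and $v$ would have to be a barcode endpoint, a contradiction. This bypasses the pairing entirely and gives exactly the lower bound $c_v\geqslant 2$ that you need, without the stronger (and unused) claim that $c_v-\mu_v$ is always even. Your parity argument does dispense with needing $c_v\geqslant\mu_v$ separately at endpoint values, but the Euler characteristic route is the cleaner way to close the gap you flag at the end of your proposal.
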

It follows in particular that~$\mathrm{H}_p(\persmap^{-1}(D))=0$ for~$p> \frac{\sharp K-\sharp D}{2}$.
We apply Proposition~\ref{proposition_improved_bounding_dimension_fiber} to various fibers in the case where~$\SComplex$ is a triangle in Appendix~\ref{section_triangle}. For these fibers, the bounded deficit is almost systematically a tight upper-bound on the dimension of the fiber. 
\begin{proof}
Since~$\dim D\leqslant \sharp \SComplex$ and~$\dim D\leqslant \sharp D$, we have~$\dim D\leqslant \frac{\sharp K+ \sharp D}{2}$, therefore

\[ \mathrm{codim}(D) = \sharp \SComplex - \dim D \geq \frac{\sharp \SComplex-\sharp D}{2}.\]

So we now investigate the left inequality~$\dim \persmap^{-1}(D) \leqslant \frac{\sharp \SComplex-\sharp D}{2}$. 

Let~$\StratumF$ be a filter stratum and let  $(x_1,\cdots,x_{\dim D}):=\ChartBarc^{-1}(D)\in \mathring{\StandSimplex}^{\dim D}$, with $x_0:=0$ and~$x_{\dim D+1}:=1$. We set $\{D\}:=\{x_i\}_{i=0}^{\dim D+1}$. Let~$\simplex\in K$ be a simplex. By the equivariance Lemma~\ref{lemma_commutes_PH_homeo}, a filter $f\in \persmap^{-1}(D)\cap \StratumF$ satisfy~$f(\simplex)\in \{D\}$ if and only if all filters $f\in \persmap^{-1}(D)\cap \StratumF$ satisfy $f(\simplex)\in \{D\}$. There are at least~$\sharp D$ such simplices, since each interval endpoint in the barcode~$D$ must correspond to at least one simplex entering the filtration.

Meanwhile, there are exactly~$\dim \StratumF- \dim D$ distinct values~$x=f(\simplex)$ that are not in~$\{D\}$ for all filters~$f\in \persmap^{-1}(D)\cap \StratumF$. Each such value~$x$ is attained by at least~$2$ simplices, as otherwise~$f^{-1}(x)$ would be a singleton hence would have non-zero Euler characteristic and we would have~$x\in \{D\}$. We obtain
\[2\times (\dim \StratumF-\dim D)+  \sharp D \leqslant \sharp \SComplex.\]
From the item {\bf (a)} of Theorem~\ref{theorem_fiber_bundle_polyhedral}, the polyhedron~$\PolytopeClosed$ has dimension~$\dim \StratumF -\dim D$, and the above inequality yields $\dim \persmap^{-1}(D) \leqslant \frac{\sharp \SComplex-\sharp D}{2}$. %The result about vanishing homology groups follows by taking an arbitrary triangulation of~$\persmap^{-1}(D)$ using simplices of dimensions upper bounded by~$\frac{\sharp K-\sharp D}{2}$.
\end{proof} 
When~$D$ is the image of an injective filter~$f$, then~$\sharp  K=\dim D= \sharp  D$ and hence, by Proposition \ref{proposition_improved_bounding_dimension_fiber}, the dimension of the fiber above is zero. We thus have the following immediate consequence.
\begin{corollary}
\label{corollary_discrete_fiber_injective}
If $f\in \Filt_\SComplex$ is injective, then the fiber $\persmap^{-1}(\persmap(f))$ is a finite set.
\end{corollary}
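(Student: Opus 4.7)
The plan is to invoke Proposition~\ref{proposition_improved_bounding_dimension_fiber} with the observation that, for an injective filter, the numerical bound degenerates to zero, after which the polyhedral complex structure of Theorem~\ref{theorem_fiber_bundle_polyhedral} forces the fiber to be finite.

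\textbf{Step 1: Reduce to a counting argument.} I would first show that when $f$ is injective, $\sharp D = \sharp \SComplex$, where $D := \persmap(f)$. The key input is the standard persistence dichotomy: since $f$ is injective, its sublevel set filtration $\SComplex(f)$ is built by adding one simplex at a time, and at each step the newly added simplex is either \emph{positive} (creating a new homology class, contributing a birth) or \emph{negative} (killing a previously created class, contributing a finite death). Thus each of the $\sharp \SComplex$ simplices contributes exactly one finite endpoint to the total barcode $D$: the positive simplices whose associated class never dies contribute one finite endpoint (a birth paired with $\infty$), and each positive–negative pair contributes two finite endpoints. In either case the total count of finite endpoints, with multiplicity, is exactly $\sharp \SComplex$.

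\textbf{Step 2: Apply the dimension bound.} With $\sharp D = \sharp \SComplex$, Proposition~\ref{proposition_improved_bounding_dimension_fiber} gives
\[
\dim \persmap^{-1}(D) \;\leqslant\; \frac{\sharp \SComplex - \sharp D}{2} \;=\; 0.
\]

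\textbf{Step 3: Conclude finiteness.} By Theorem~\ref{theorem_fiber_bundle_polyhedral}(b), $\persmap^{-1}(D)$ is the support of a finite polyhedral complex, and by Step~2 each of its polyhedra has dimension zero, hence is a single point. Since a polyhedral complex consists of finitely many polyhedra by definition, the fiber is a finite set.

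\textbf{Main obstacle.} The only real content is Step~1, the counting identity $\sharp D = \sharp \SComplex$ for injective $f$. This is standard in discrete Morse/persistence theory, but it needs to be invoked carefully because $\sharp D$ counts finite endpoints with multiplicity (so each bounded bar is counted twice, each infinite bar once) while the positive/negative decomposition partitions the simplices. The bookkeeping is routine once one unpacks the definitions, and everything else is immediate from the results already in hand.
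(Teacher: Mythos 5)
Your proof is correct and follows the same route as the paper: for an injective filter, each simplex enters the sublevel-set filtration singly and is either positive (a birth) or negative (a death), so the count of finite endpoints with multiplicity is exactly $\sharp \SComplex$, forcing the bounded deficit to vanish and the fiber to be a zero-dimensional, hence finite, polyhedral complex. The paper states the equalities $\sharp K = \dim D = \sharp D$ compactly without spelling out the positive/negative bookkeeping; you unpack that step explicitly, which is appropriate, but the argument is essentially identical.
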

Barcodes corresponding to injective filters are of maximal dimension.
At the other extreme we have barcodes of dimension~$1$,
i.e. barcodes~$D$ consisting simply of an infinite interval~$(x, \infty )$, possibly with multiplicity. The constant filter with value~$x$ gives rise to such a barcode. 
Although the fiber~$\persmap^{-1}(D)$ does not reduce to this constant filter, we nevertheless show that it retracts to it.  
\begin{proposition}
\label{proposition_barcode_minimal_endpoints}
A barcode of dimension~$1$ in~$\CatBarc$ has contractible fiber.
\end{proposition}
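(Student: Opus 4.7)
The plan is to construct an explicit deformation retraction of $\persmap^{-1}(D)$ onto the constant filter $f_\star\equiv x$, where $x\in(0,1)$ is the unique distinct bar-endpoint of $D$ in the interior of $\I$. As discussed in the paragraph above, for barcodes of dimension $1$ in $\CatBarc$ the filter $f_\star$ already sits in the fiber: its sublevel sets are empty for $t<x$ and equal to $\SComplex$ for $t\geq x$, producing exactly the infinite bars $(x,\infty)$ that constitute $D$.

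Concretely, I would consider the straight-line homotopy
\[
H : \persmap^{-1}(D)\times[0,1] \longrightarrow \Filt_\SComplex, \qquad H(f,t)(\simplex):=(1-t)f(\simplex)+tx.
\]
Each $H(f,t)$ is a filter by convexity, the formula is affine in $(f,t)$ so continuity is automatic, and $H(f,0)=f$ while $H(f,1)=f_\star$. The only substantive step is to verify that $H(f,t)\in\persmap^{-1}(D)$ for every~$t$.

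To do so, first note that any $f\in\persmap^{-1}(D)$ satisfies $f(\simplex)\geq x$ for every simplex~$\simplex$: since no bars of $D$ are born strictly before~$x$, the sublevel sets of $f$ are empty for parameters $s<x$. It follows that $H(f,t)(\simplex)\geq x$ as well, so the sublevel sets of $H(f,t)$ below~$x$ are empty too. For $s\in[x,1]$ and $t\in[0,1)$, a direct computation gives
\[
H(f,t)^{-1}([0,s]) \,=\, f^{-1}\!\Big(\Big[0,\tfrac{s-tx}{1-t}\Big]\Big),
\]
so the filtration associated to $H(f,t)$ on $[x,1]$ is obtained from that of $f$ by the affine reparametrization $r\mapsto (1-t)r+tx$, which fixes both $x$ and~$\infty$. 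Because every bar-endpoint of $D$ lies in $\{x,\infty\}$, this reparametrization preserves the barcode and therefore $\persmap(H(f,t))=D$, which is the desired conclusion.

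The main (and only real) obstacle is this barcode-preservation verification, and it relies crucially on the restrictive form of $D$---every bar being of shape $(x,\infty)$---so that the reparametrization fixes all bar-endpoints. With that check completed, $H$ is the required deformation retraction of $\persmap^{-1}(D)$ onto $\{f_\star\}$ and contractibility follows.
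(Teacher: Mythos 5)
Your proof is correct and follows essentially the same route as the paper: both construct the straight-line homotopy $(1-t)f+tx$ to the constant filter and exploit that $f(\simplex)\geq x$ for all $\simplex$ (equivalently $\min f=x$) to show the homotopy stays in $\persmap^{-1}(D)$, so the fiber is star-shaped around the constant. The only difference is in the verification: you show directly that the sublevel sets of $(1-t)f+tx$ are an affine reparametrization of those of $f$ fixing $x$ and $\infty$, whereas the paper notes that $(1-t)f+tx$ stays in the same filter stratum as $f$ and invokes Proposition~\ref{proposition_PH_sends_strata_to_strata} together with $\min\bigl((1-t)f+tx\bigr)=x$ — these are the same underlying observation expressed differently.
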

\begin{proof}
Let~$x$ be the unique endpoint value in~$D$. By assumption, there exists a filter $f$ in~$\persmap^{-1}(D)$. Note that we must have~$\min f =x$. We show that the straight line homotopy $(1-t)f+tx$ lies in~$\persmap^{-1}(D)$. For $t<1$, the filters $(1-t)f+tx$ and $f$ induce the same pre-order on simplices of~$\SComplex$, hence lie in a common stratum~$\StratumF$. By Proposition~\ref{proposition_PH_sends_strata_to_strata}, $\persmap(\StratumF)=\StratumD$ where $\StratumD$ is the stratum containing $D$, which consist in barcodes $D'$ obtained from $D$ by moving the unique endpoint value $x$ to any other value $x'$. Hence $\persmap((1-t)f+tx)$ is such a barcode, and must equal $D$ since $x'=\min (1-t)f+tx= \min f =x$. By continuity, at $t=1$, we further get that the constant filter $x$ is in the fiber. Then $\persmap^{-1}(D)$ is star-shaped around $x$.
\end{proof}
\begin{remark}
\label{remark_open_question_dimension_polyhedra_pure}
The dimension~$\dim \persmap^{-1}(D)$ being upper-bounded, we can ask if conversely the star of every polyhedron in the fiber has dimension~$\dim \persmap^{-1}(D)$. The example section, Appendix~\ref{section_triangle}, suggests that this property holds true in the case where the complex~$\SComplex$ is a manifold. In general however, two distinct filters in the fiber may have neighborhoods of distinct dimensions. Consider the following barcode~$D$
\begin{figure}[H]
\centering
\includegraphics[width=0.5\textwidth]{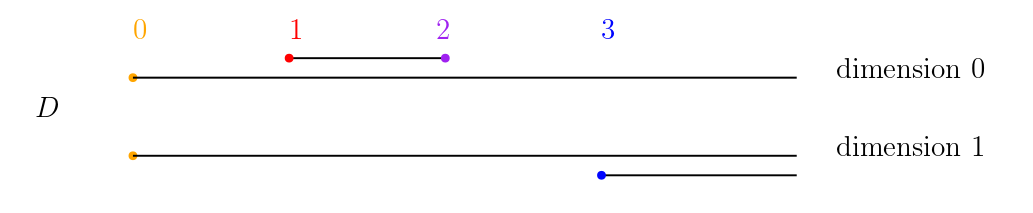}
\caption*{}
\end{figure}
where each endpoint value is given a different color.  In Fig.~\ref{fig:fiber_not_pure} below we draw the simplicial complex (left), together 
\begin{figure}[H]
\centering
\includegraphics[width=0.6\textwidth]{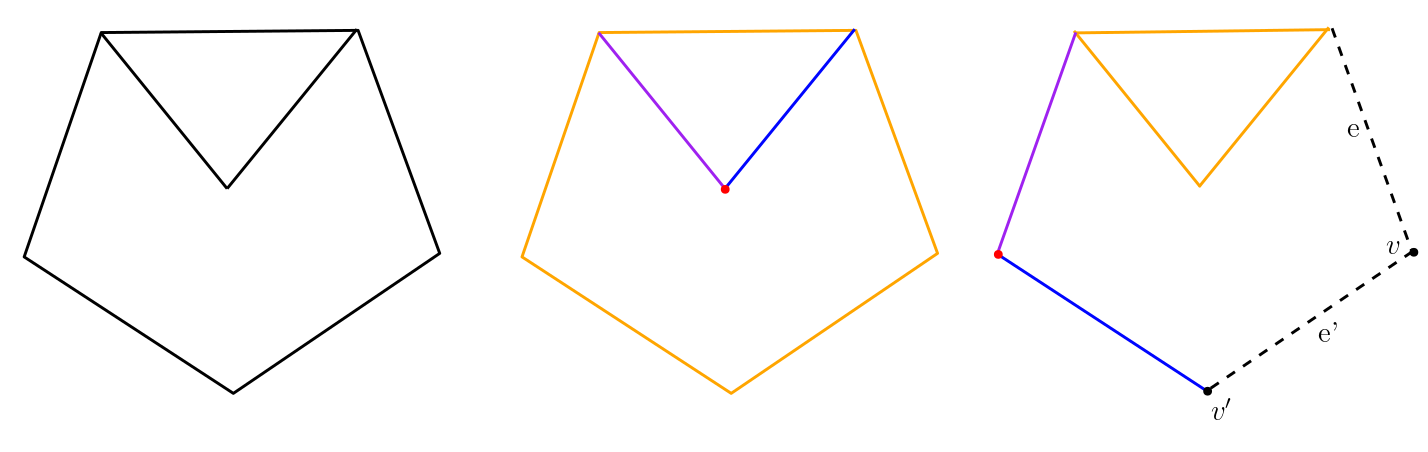}
\caption{}
\label{fig:fiber_not_pure}
\end{figure}
with two filters (middle and right) in the fiber~$\persmap^{-1}(D)$ whose values on simplices are indicated by the colors. The filter~$f$ on the right has unspecified values on vertices and edges~$v,v',e,e'$ (colored in black), which means that whatever these values are, we can modify them as long as~$f(v)=f(e)$ and~$f(v')=f(e')$ and still get a filter whose barcode is~$D$. Therefore, the star of~$f$ is $2$-dimensional. However, the filter in the middle is alone in its neighborhood: all its values are fixed to endpoints of~$D$ and infinitesimal changes of any of these values yield filters out of~$\persmap^{-1}(D)$. 
\end{remark}
\section{The barcode category and the fiber functor}
In section~\ref{sec:barcode_category}, we define morphisms between barcodes. This makes the image of the persistence map into a topological category, which is homotopy discrete (Theorem~\ref{theorem_barcode_category}). We further show that morphisms of barcodes can be deformed into particularly nice morphisms which we refer to as {\em simplicial} morphisms (Proposition~\ref{proposition_component_contains_simplicial}). Such morphisms can always be described as finite compositions of morphisms between codimension~$1$ barcodes (Proposition~\ref{proposition_simplicial_morphism_composition_codim1}). In section~\ref{sec:theorem_2}, morphisms of barcodes are pulled-back to provide maps of fibers, that are furthermore maps of polyhedral complexes up to homotopy (Proposition~\ref{proposition_monodromy}).
\subsection{The barcode category is homotopy discrete}
\label{sec:barcode_category}
We make the image~$\CatBarc$ of the persistence map into a category, so that in the next section we view the fiber~$\persmap^{-1}$ as a functor. For this we use the action of~$\NonDecMaps$ on~$\CatBarc$ as defined in Eq.~\eqref{eq_definition_action_barcodes} in order to define the morphisms between any two barcodes: 

\begin{definition}
We denote by~$\CategoryBarc$  the~$\mathbf{Top}$-enriched {\em category of barcodes} with~$D\in \Barc_\SComplex$ as objects and non-decreasing continuous maps~$\phi\in \NonDecMaps$ such that~$\phi.D=D'$ as the space of morphisms between~$D$ and~$D'$,~$\IncMaps(D,D')$.
Two morphisms~$\phi_0,\phi_1\in \IncMaps(D,D')$ are {\em homotopic as morphisms} %,~$\phi_0 \sim \phi_1$, 
if they belong to the same path connected component of~$\IncMaps(D,D')$.
\end{definition}
By definition, the sets of morphisms in a $\mathbf{Top}$-enriched
category~$\mathcal{C}$ come equipped with a topology, and so taking path connected components yields the associated {\em homotopy category} which is denoted by~$h\mathcal{C}$. The following result, whose proof is delayed to the end of the section, states that the spaces of morphisms in~$\CategoryBarc$ are made of contractible components. 
\vskip .1in
\begin{theorem}
\label{theorem_barcode_category}
For any two barcodes~$D,D'$ in~$\CatBarc$, the space of morphisms~$\IncMaps(D,D')$ has finitely many path connected components each of  which is contractible. In particular,
the category~$\CategoryBarc$ is {\em homotopy discrete}, i.e. $\CategoryBarc (D, D') \simeq h\CategoryBarc(D , D')$.
\end{theorem}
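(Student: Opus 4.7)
The plan is to partition $\IncMaps(D,D')$ into finitely many convex level sets of a combinatorial invariant, and then show these are exactly its path-connected components. For $\phi \in \IncMaps(D,D')$, I would define $\sigma(\phi)$ as the combinatorial datum recording, for each bar $(b,d) \in D$ (respecting multiplicities), either the bar $(\phi(b), \phi(d)) \in D'$ when $\phi(b) < \phi(d)$, or the symbol $*$ when $\phi(b) = \phi(d)$. Since $\phi.D = D'$ and both $D, D'$ are finite multisets, $\sigma$ takes only finitely many possible values.

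The first step is to show that $\sigma$ is locally constant on $\IncMaps(D,D')$. For a bar $(b,d) \in D$ that $\phi$ sends non-trivially to $(y_{j_1}, y_{j_2}) \in D'$, any $\psi \in \IncMaps(D,D')$ sufficiently close to $\phi$ in sup norm satisfies $\psi(b), \psi(d)$ close to $y_{j_1}, y_{j_2}$; but the constraint $\psi.D = D'$ forces $\psi(b), \psi(d)$ into the finite set of endpoints of $D'$, pinning them to $y_{j_1}, y_{j_2}$. Conversely, if $\phi$ collapses $(b,d)$, then $|\psi(d) - \psi(b)|$ will be strictly smaller than the minimum positive bar length of $D'$, so $\psi$ cannot send $(b,d)$ to any bar of $D'$ and is forced to collapse it as well.

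The second step is to show that each level set $\IncMaps_\sigma := \sigma^{-1}(\sigma)$ is convex in $\NonDecMaps$. Given $\phi_0, \phi_1 \in \IncMaps_\sigma$, the linear interpolation $\phi_t := (1-t)\phi_0 + t\phi_1$ remains non-decreasing and fixes $\{0,1\}$. On a non-collapsed bar $(b,d)$ with $\sigma(b,d) = (y_{j_1}, y_{j_2})$, both $\phi_0$ and $\phi_1$ map $b \mapsto y_{j_1}$ and $d \mapsto y_{j_2}$, so $\phi_t$ agrees; on a collapsed bar, $\phi_0(b) = \phi_0(d)$ and $\phi_1(b) = \phi_1(d)$ give $\phi_t(b) = \phi_t(d)$. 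Thus $\phi_t \in \IncMaps_\sigma$, so $\IncMaps_\sigma$ is convex, hence contractible.

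Combining the three steps, the non-empty $\IncMaps_\sigma$ would form a finite collection of clopen, contractible subspaces partitioning $\IncMaps(D,D')$, so they are exactly its path-connected components, and homotopy discreteness of $\CategoryBarc$ follows immediately. The main obstacle I anticipate is the careful bookkeeping with multiplicities when $D$ or $D'$ has repeated bars: $\sigma$ must be defined as an assignment up to permutation of identical copies, and the local constancy argument must track which copy of an identical bar is matched to which under a small perturbation to ensure the perturbed morphism still satisfies $\psi.D = D'$ on the nose.
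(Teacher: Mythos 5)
Your proposal is correct and follows essentially the same approach as the paper: both use the bar-assignment (the paper's assertion (ii) of Lemma~\ref{lemma_homotopic_zigzag_index}) as a combinatorial invariant taking finitely many values, and both use straight-line interpolation to show the level sets are star-shaped, hence contractible. The only organizational difference is that you establish local constancy of the invariant directly in $\IncMaps(D,D')$ (making level sets clopen), whereas the paper proves constancy of the assignment along any given path via a clopen-in-$[0,1]$ argument, and separately introduces the index $\Phi$ (assertion (iv)) to bound the number of components---a detour your argument avoids; also, your worry about multiplicities is unfounded, since the assignment depends only on the geometric image of each bar under $\phi$ as a function, which is already a multiset identity forced by $\phi.D = D'$.
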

In fact, when~$D$ and~$D'$ belong to the same stratum,~$\IncMaps(D,D')$ is contractible. Indeed, a morphism~$\phi$ from~$D$ to~$D'$ is then simply a non-decreasing map that sends the $i$-th endpoint of~$D$ to the $i$-th endpoint of~$D'$. Hence, given an arbitrary $\phi_0\in \IncMaps(D,D')$, the straight line homotopy $(t,\phi)\mapsto (1-t)\phi + t \phi_0$ is a deformation retract of~$\IncMaps(D, D')$ onto the point~$\phi_0$. In general, however, there may be more than one homotopy class of morphisms in~$\IncMaps(D, D')$.

%For the rest of this section we fix the barcodes~$D,D'$ and 
Before we embark on the proof of this,  we explore some other properties of the morphism space~$\IncMaps(D,D')$. 
As before, we use the coordinate charts~$\ChartBarc:  \mathring{\StandSimplex}^{\dim D} \overset\cong \longrightarrow \StratumD_D $ of Eq.~\eqref{eq_coord_barcode} in order to define the consecutive endpoint values $  (x_1,\cdots, x_{\dim D})=\ChartBarc^{-1}(D)$ and $ (x'_1,\cdots, x'_{\dim D'})=\ChartBarc^{-1}(D')$. By convention, we also set~$x_0=x'_0=0$ and~$x_{\dim D+1}=x_{\dim D'+1}=1$.

We denote by~$\I(D)$ the simplicial complex obtained from subdividing the unit interval by the~$\dim D$ endpoints~$0<x_1<\cdots<x_{\dim D}<1$. The morphisms from~$D$ to~$D'$ that send endpoints to endpoints piecewise linearly are of particular interest to us.
\begin{definition}
\label{definition_simplicial_morphism}
A morphism~$\phi\in \IncMaps(D,D')$ is {\em simplicial} if it is induced from the geometric realisation of a simplicial map from~$\I(D)$ to~$\I(D')$. 
\end{definition}
\begin{proposition}
\label{proposition_component_contains_simplicial}
For any two barcodes $D,D'\in \CatBarc$, each connected component of~$\IncMaps(D,D')$ contains at least one simplicial map.
\end{proposition}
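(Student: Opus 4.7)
The plan is to take an arbitrary $\phi \in \IncMaps(D, D')$ and construct a continuous path inside $\IncMaps(D, D')$ from $\phi$ to a simplicial morphism. The guiding observation is that if a bar $(b,d) \in D$ does not collapse under $\phi$, i.e.\ $\phi(b) \neq \phi(d)$, then $(\phi(b), \phi(d))$ must be a bar of $D'$, so $\phi(b)$ and $\phi(d)$ are endpoints of $D'$, hence vertices of $\I(D')$. Writing $y_i := \phi(x_i)$ at the vertices $0 = x_0 < x_1 < \cdots < x_{\dim D + 1} = 1$ of $\I(D)$, it follows that $y_i$ can fail to be a vertex of $\I(D')$ only when every bar of $D$ having $x_i$ as an endpoint collapses; in that case $y_i$ lies in some open gap between two consecutive vertices of $\I(D')$.

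I would define the candidate simplicial morphism $\phi_s$ on the vertices of $\I(D)$ by setting $\phi_s(x_i) = y_i$ whenever $y_i$ is already a vertex of $\I(D')$, and $\phi_s(x_i)$ equal to the left endpoint of the containing gap otherwise, extending $\phi_s$ piecewise linearly on each edge of $\I(D)$. To verify that $\phi_s$ is genuinely simplicial it suffices to show that $\phi_s(x_{i-1})$ and $\phi_s(x_i)$ are always equal or consecutive vertices of $\I(D')$: any strictly intermediate vertex $x'_m$ of $\I(D')$ would be an endpoint of $D'$, hence equal to some $\phi(x_n)$ with $x_n$ an endpoint of a non-collapsing bar, and monotonicity of $\phi$ would then force $i - 1 < n < i$, impossible for integer $n$.

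Connecting $\phi$ to $\phi_s$, I would use the straight-line homotopy $\phi_t := (1-t)\phi + t\phi_s$, which is non-decreasing as a convex combination of non-decreasing maps. The check that $\phi_t . D = D'$ holds for every $t$ splits into two parts. First, for each non-collapsing bar $(x_k, x_l)$ of $\phi$, the values $y_k$ and $y_l$ are already vertices, so $\phi_s(x_k) = y_k$ and $\phi_s(x_l) = y_l$, making $\phi_t(x_k)$ and $\phi_t(x_l)$ independent of $t$ and leaving the image bar unchanged. Second, for each collapsing bar $(x_k, x_l)$, monotonicity forces $\phi$ to be constant on $[x_k, x_l]$, so $y_k = y_l$; consequently $\phi_s(x_k) = \phi_s(x_l)$ (either the common vertex, or the left endpoint of the common gap), and hence $\phi_t(x_k) = \phi_t(x_l)$ for all $t$.

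The main obstacle is the combinatorial monotonicity bookkeeping underpinning both the simpliciality of $\phi_s$ and the non-decreasing property of $\phi_t$ on vertices, which requires a short case distinction depending on whether consecutive $y_{i-1}$ and $y_i$ are vertices or lie in the same or distinct gaps of $\I(D')$. Once this analysis is in place, the remaining verifications reduce to routine piecewise-linear interpolation.
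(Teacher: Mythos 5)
Your proposal is correct and takes essentially the same approach as the paper's proof: both construct the simplicial map by rounding each $\phi(x_i)$ down to the largest endpoint $x'_j\leqslant\phi(x_i)$ of $D'$ and then connect $\phi$ to it by a straight-line homotopy. Your extra verifications (that the resulting map is in fact simplicial, and that the interpolation stays inside $\IncMaps(D,D')$) merely spell out details the paper leaves implicit.
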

\begin{proof}
Let~$\Omega$ be a path connected component of~$\CategoryBarc(D,D')$ and~$\phi\in \Omega$. Let~$\psi$ be the piecewise linear extension of the map sending each~$x_i$ to the unique endpoint~$x'_j$ satisfying $x'_j\leqslant \phi(x_i)<x'_{j+1}$. The morphism~$\psi$ is then simplicial, and it is homotopic to~$\phi$ through a straight-line homotopy.
%We can modify~$\phi$ by sending each~$x_i$ to the unique endpoint~$x'_j$ satisfying $x'_j\leqslant \phi(x_i)<x'_{j+1}$.%\footnote{Note that the morphism $\phi$ is then a maximal element for the pre-order $\nearrow$.} 
%
\end{proof}
Therefore, a morphism of barcodes is (up to homotopy) a simplicial map over the unit interval. We next show that simplicial morphisms are finite compositions of simplicial morphisms between barcodes that differ by one dimension. Note that this implies that the morphisms in~$h\CategoryBarc$ are generated by morphisms between barcodes that differ by one dimension.
\begin{proposition}
\label{proposition_simplicial_morphism_composition_codim1}
Let~$\phi\in \CategoryBarc(D,D')$ be a simplicial morphism. There exists a finite sequence of barcodes
\[D=:D_1, D_2, \cdots , D_k:=D'\]
satisfying~$0\leqslant \dim D_i-\dim D_{i+1}\leqslant 1$ for~$1\leqslant i \leqslant k-1$, together with simplicial morphisms $\phi_i\in \CategoryBarc(D_i,D_{i+1})$ between them, such that
\[\phi= \phi_k \circ \phi_{k-1}\circ \cdots \circ \phi_2 \circ \phi_1.\]
\end{proposition}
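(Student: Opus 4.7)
The plan is to proceed by induction on the non-negative integer $n - m$, where $n := \dim D$ and $m := \dim D'$, peeling off one elementary collapse at each step. The simplicial morphism $\phi$ is determined by its restriction to the vertex sets: a non-decreasing map $\phi_v : \{0, x_1, \dots, x_n, 1\} \to \{0, x'_1, \dots, x'_m, 1\}$ with $\phi_v(0) = 0$ and $\phi_v(1) = 1$. Since $\phi.D = D'$ and every endpoint of $D'$ must arise as the image of some endpoint of $D$, the map $\phi_v$ is surjective, and in particular $n \geq m$.

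The base case $n = m$ is immediate: a surjective non-decreasing map between two ordered sets of equal cardinality is the identity on indices, so the trivial decomposition with $D_1 := D$, $D_2 := D'$, and $\phi_1 := \phi$ works, giving $\dim D_1 - \dim D_2 = 0$. For the inductive step $n > m$, pigeonhole produces consecutive indices $i, i+1 \in \{0, \dots, n\}$ (with conventions $x_0 = 0$, $x_{n+1} = 1$) such that $\phi_v(x_i) = \phi_v(x_{i+1})$, and by monotonicity and continuity $\phi$ is then constant on $[x_i, x_{i+1}]$. I will introduce the elementary collapse $\alpha_1 \in \NonDecMaps$ defined as the piecewise linear map that is the identity on $[0, x_i]$ and on $[x_{i+2}, 1]$, collapses $[x_i, x_{i+1}]$ to the point $x_i$, and sends $[x_{i+1}, x_{i+2}]$ linearly onto $[x_i, x_{i+2}]$. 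Setting $D_2 := \alpha_1.D$ yields a barcode with $\dim D_2 = n - 1$, and $\alpha_1$ is a simplicial morphism from $\I(D)$ to $\I(D_2)$ by direct inspection on each edge (it either acts as the identity, collapses an edge to a vertex, or maps an edge affinely onto the new edge $[x_i, x_{i+2}]$ of $\I(D_2)$).

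Because $\alpha_1 : \I \to \I$ is surjective and $\phi$ is constant on its unique non-trivial fiber $\alpha_1^{-1}(x_i) = [x_i, x_{i+1}]$, there is a unique continuous non-decreasing map $\phi'$ with $\phi = \phi' \circ \alpha_1$; clearly $\phi' \in \NonDecMaps$ and $\phi'.D_2 = \phi'.(\alpha_1.D) = \phi.D = D'$, so $\phi' \in \CategoryBarc(D_2, D')$. The argument will then close by induction, since $\dim D_2 - \dim D' = n - m - 1$, provided I can establish that $\phi'$ is itself simplicial. This is the main subtlety of the proof, and it is the step that genuinely uses the simplicial hypothesis on $\phi$: on each edge of $\I(D_2)$ that was already an edge of $\I(D)$, $\phi'$ coincides with $\phi$ and is affine by hypothesis; on the new edge $[x_i, x_{i+2}]$ it is the composition of the affine inverse $\alpha_1^{-1} : [x_i, x_{i+2}] \to [x_{i+1}, x_{i+2}]$ with the affine restriction of $\phi$ to the old edge $[x_{i+1}, x_{i+2}]$, hence itself affine, and a direct check shows it sends vertices of $\I(D_2)$ to vertices of $\I(D')$. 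Applying the inductive hypothesis to $\phi'$ and prepending $\alpha_1$ then produces the desired decomposition with the dimension constraints $\dim D_i - \dim D_{i+1} \in \{0, 1\}$.
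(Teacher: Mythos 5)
Your overall strategy (induction on $\dim D - \dim D'$, factoring $\phi$ through a single elementary collapse) is the same as the paper's, but there is a genuine gap at the key step: the assertion that $D_2 := \alpha_1.D$ satisfies $\dim D_2 = n-1$. This is stated without justification, and it is false in general. After collapsing $[x_i,x_{i+1}]$, any interval $(x_i,x_{i+1})$ of $D$ becomes trivial and is suppressed; if \emph{every} interval of $D$ touching $\{x_i,x_{i+1}\}$ is of this form, then the merged point is no longer an endpoint of any interval, and two distinct endpoints are lost at once, so $\dim D_2 = n-2$. A concrete instance: take $D$ of the type arising for the triangle, $[\{(0,\infty),(x_1,x_2),(x_3,x_4)\},\{(x_5,\infty)\}]$, and a simplicial $\phi$ with $\phi(x_2)=\phi(x_3)=\phi(x_4)=x'_2$ and $\phi$ injective on the remaining endpoints. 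Pigeonhole permits you to pick $i=3$, i.e.\ collapse $[x_3,x_4]$; then $(x_3,x_4)$ is killed and no interval of $D_2$ has $x_3$ as an endpoint, so $\dim D_2 = n-2$. This violates the constraint $\dim D_1 - \dim D_2 \leq 1$, and in fact $\alpha_1$ is not even simplicial in this case, since $x_3$ is a vertex of $\I(D)$ but $\alpha_1(x_3)$ is not a vertex of $\I(D_2)$.

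The missing idea is precisely the one the paper's proof is built around: one cannot collapse an arbitrary pair $\{x_i,x_{i+1}\}$ lying over the same $x'_j$; one must \emph{choose} the pair so that the merged point remains a genuine interval endpoint after the collapse. The paper first disposes of the cases where some $x_i$ is sent to $0$ or $1$ (collapsing into a boundary point never kills an interval, so the drop is automatically $1$), and then, in the interior case, selects $x_i \in \phi^{-1}(x'_j)$ so that some interval $(x_i,d)$ or $(b,x_i)$ in $D$ has its other endpoint \emph{outside} $\phi^{-1}(x'_j)$; such an $x_i$ exists because $x'_j$ is itself an endpoint of a nontrivial interval of $D' = \phi.D$. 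With this choice the collapsed interval survives as a nontrivial interval of $D_1$, guaranteeing $\dim D - \dim D_1 = 1$ and that $\phi_1$ is simplicial. Your proof would be repaired by replacing the bare pigeonhole argument with this more careful selection of $i$.
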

\begin{proof}
We proceed by induction on~$\dim D-\dim D'$. If $\dim D-\dim D'\in \{0,1\}$, the statement is trivial. So we assume that~$\dim D-\dim D'\geqslant 2$. We first treat the case where~$\phi$ sends the first endpoint~$x_1$ of~$D$ to~$0$. Then, we may write~$\phi$ as a composition~$\phi'\circ \phi_1$ where~$\phi_1$ is the map that collapses the interval~$[0,x_1]$ to~$0$ and sends~$[x_1,1]$ to~$[0,1]$ linearly. Clearly then,~$D_1:= \phi_1(D)$ satisfies~$\dim D-\dim D_1=1$, and the morphisms~$\phi_1\in \CategoryBarc(D,D_1)$ and~$\phi'\in \CategoryBarc(D_1,D')$ are simplicial. After repeating this operation as many times as necessary, we may assume that~$\phi$ does not send any endpoint~$x_i$ to~$0$, for $1\leq i \leq \dim D$. Symmetrically, we may assume that~$\phi$ does not send any endpoint~$x_i$ to~$1$, for $1\leq i \leq \dim D$.

The morphism~$\phi$ being simplicial, there is an endpoint~$x'_j$, here~$1\leqslant j \leqslant \dim D'$, whose pre-image by~$\phi$ is a sequence of at least two consecutive endpoints~$x_i$ of~$D$. We choose an endpoint~$x_i\in \phi^{-1}(x_j)$ for which there exists a non-trivial interval~$(x_i,d)$ (or~$(b,x_i)$) in~$D$ such that~$\phi(d)\neq x'_j$ (or~$\phi(b)\neq x'_j$). Such an endpoint must exist because~$x'_j$ is the endpoint of a non-trivial interval in~$D'$, which is the image by~$\phi$ of an interval in~$D$. We may assume that~$x_{i+1}\in \phi^{-1}(x_j)$, as otherwise~$x_{i-1}\in \phi^{-1}(x_j)$ and the rest of the proof can be conducted similarly. So~$\phi(x_i)=\phi(x_{i+1})=x'_j$, and in fact $\phi([x_i,x_{i+1}])=x'_j$ since~$\phi$ is non-decreasing. We may thus factor~$\phi$ as $\phi'\circ \phi_1$, where~$\phi_1\in \NonDecMaps$ is the map that collapses the interval~$[x_i,x_{i+1}]$ onto~$x'_j$, extended linearly on~$\I$. The image~$D_1:= \phi_1(D)$ then satisfies~$\dim D-\dim D_1=1$, since~$\phi_1((x_i,d))=(x'_j,\phi(d))$ is a non-trivial interval in~$D_1$ and~$\phi$ acts injectively on the endpoints~$x_k$, for $k\notin \{i,i+1\}$. Finally, the morphisms $\phi_1\in \CategoryBarc(D,D_1)$ and~$\phi'\in \CategoryBarc(D_1,D')$ are simplicial, which concludes the proof.
\end{proof}
We return to the study of the homotopy type of~$\IncMaps(D,D')$. In order to characterise when any two morphisms in~$\IncMaps(D,D')$ are homotopic, the following structure will be instrumental:
\begin{definition}
\label{definition_index}
Let $\phi\in \IncMaps(D,D')$. The {\em index} of~$\phi$ is the collection~$\phinail := \{\phinail (j)\}_{j=0}^{\dim D'+1}$, where
\begin{equation*}
\phinail(j):= \big\{ 1\leqslant i \leqslant \dim D \, | \, \, \phi(x_i)=x'_j \big\}.
\end{equation*}
Given~$\phi_0,\phi_1\in \IncMaps(D,D')$, we write~$\phi_0\nearrow \phi_1$ whenever
\[ \forall 0\leqslant j \leqslant \dim D'+1, \, \, \phinail_0 (j)\subseteq \phinail_1 (j).\]
\end{definition}
\begin{figure}[H]
\centering
\includegraphics[width=0.8\textwidth]{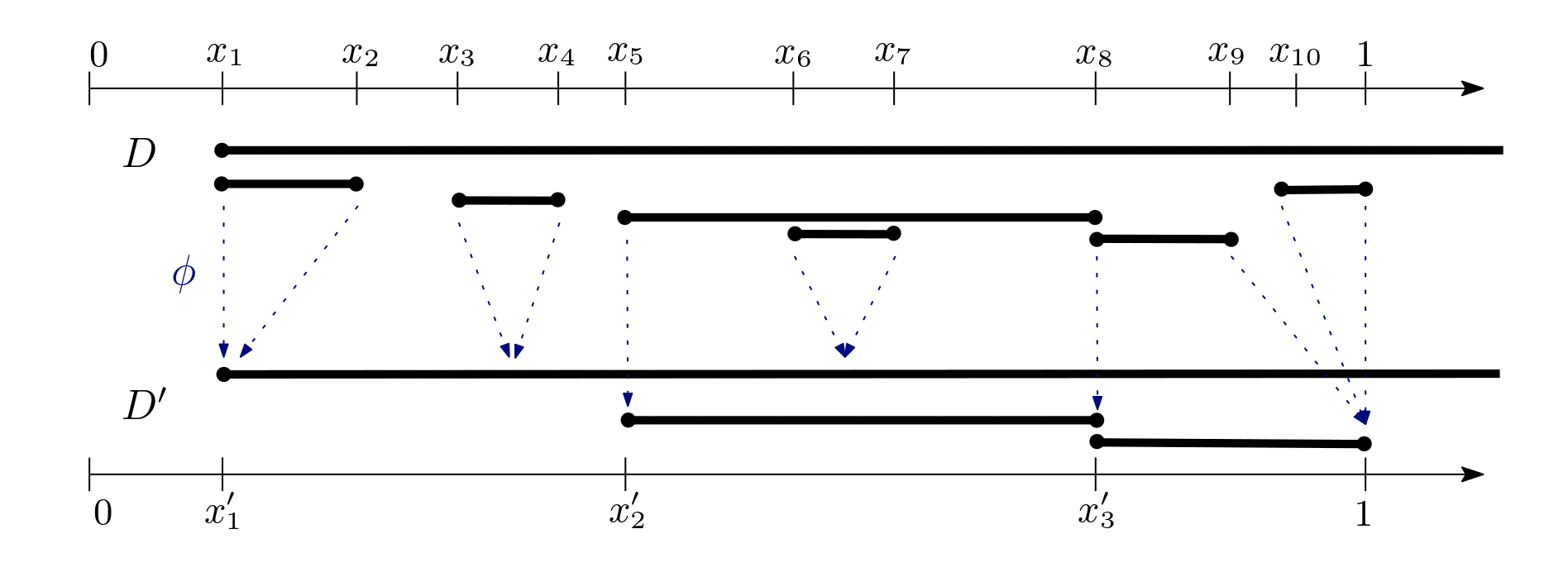}
\caption{The map $\phi : D \to D'$ has index $\phinail$ consisting of $\phinail (0) =\emptyset$, 
$\phinail (1) =\{1,2\}$,  $\phinail (2)=\{5\}$, $\phinail (3) =\{8\}$ and $\phinail (4)=\{9,{10}\}$.}
\label{fig:index}
\end{figure}
\begin{lemma}
\label{lemma_homotopic_zigzag_index}
Let $\phi_0,\phi_1 \in \IncMaps(D,D')$. Then the following are equivalent:
\begin{itemize}
\item[(i)]  The two morphisms are homotopic, i.e. $\phi_0\sim \phi_1$ as morphisms.
\item[(ii)] For all $(b,d)\in D$ and $(b',d')\in D'$, we have $(\phi_0(b),\phi_0(d))=(b',d')$ if and only if $(\phi_1(b),\phi_1(d))=(b',d')$.
%
%\item $\phi_0(b)=\phi_0(d)$ if and only if $\phi_1(b)=\phi_1(d)$. 
%
\item[(iii)] The straight line interpolation $t \mapsto t\phi_0 +(1-t)\phi_1$ is a path in~$\IncMaps(D,D')$.
\item[(iv)] There is some $\phi\in\IncMaps(D,D')$ such that $\phi \nearrow \phi_0$ and $\phi \nearrow \phi_1$. 
\end{itemize}
In particular, if~$\phi_0 \nearrow \phi_1$, then~$\phi_0$ and~$\phi_1$ are homotopic. 
\end{lemma}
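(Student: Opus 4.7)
The plan is to prove the cycle of implications (iii) $\Rightarrow$ (i) $\Rightarrow$ (ii) $\Rightarrow$ (iv) $\Rightarrow$ (iii), and derive the closing statement from (iv) $\Rightarrow$ (i). The direction (iii) $\Rightarrow$ (i) is immediate, since the straight-line interpolation is by hypothesis a continuous path inside $\IncMaps(D,D')$, exhibiting $\phi_0$ and $\phi_1$ as lying in the same path-connected component.

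For (i) $\Rightarrow$ (ii), I would exploit the fact that the set of bars of $D'$ is finite and discrete in $\I \times (\I \cup \{\infty\})$. Given a continuous path $\phi_t$ from $\phi_0$ to $\phi_1$ inside $\IncMaps(D,D')$ and a fixed pair $(b,d) \in D$, $(b',d') \in D'$, the set $T := \{t \in [0,1] \mid (\phi_t(b), \phi_t(d)) = (b', d')\}$ is closed by continuity. If $t_0 \in T$ then $b' \neq d'$, so small perturbations of $t_0$ cannot collapse the bar; combined with the constraint that for every $t$ the pair $(\phi_t(b), \phi_t(d))$ either is degenerate or lies in the finite set of bars of $D'$, this forces $(\phi_t(b), \phi_t(d)) = (b',d')$ in a neighbourhood of $t_0$. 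Hence $T$ is clopen in $[0,1]$ and is thus all or nothing, giving (ii) after ranging over all $(b,d)$ and $(b',d')$.

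For (ii) $\Rightarrow$ (iv), I would construct an explicit $\phi$. Condition (ii) implies that $\phi_0$ and $\phi_1$ induce the same matching of non-degenerate bars of $D$ with bars of $D'$, so $\phi_0(x_i) = \phi_1(x_i) \in \{0, x'_1, \dots, x'_{\dim D'}, 1\}$ on every endpoint $x_i$ of such a bar. Set $\phi$ equal to this common value on these endpoints; on the remaining endpoints, send $x_i$ to a point strictly between the two consecutive endpoints of $D'$ that contain both $\phi_0(x_i)$ and $\phi_1(x_i)$, and interpolate piecewise linearly on $\I$. Any bar of $D$ not touched at an endpoint of $D'$ by $\phi$ must be degenerate under both $\phi_0$ and $\phi_1$ (by (ii) and the constraint $\phi_0.D = \phi_1.D = D'$), hence also under $\phi$. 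Thus $\phi \in \IncMaps(D, D')$, and by construction $\phinail(j) \subseteq \phinail_0(j) \cap \phinail_1(j)$ for all $j$, giving $\phi \nearrow \phi_0$ and $\phi \nearrow \phi_1$.

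For (iv) $\Rightarrow$ (iii), note that $\phi_t := t\phi_0 + (1-t)\phi_1$ automatically belongs to $\NonDecMaps$; the nontrivial verification is $\phi_t . D = D'$. The hypothesis $\phi \nearrow \phi_0, \phi_1$ forces $\phi_0(x_i) = \phi(x_i) = \phi_1(x_i)$ whenever $i \in \phinail(j)$ for some $j$, so on any bar non-degenerate under $\phi$ the interpolation $\phi_t$ agrees with $\phi$ on both endpoints and sends the bar to the corresponding bar of $D'$. For bars degenerate under $\phi$, a monotonicity argument using $\phi_0.D = \phi_1.D = D'$ and the definition of $\nearrow$ shows they remain degenerate under both $\phi_0$ and $\phi_1$, and hence under $\phi_t$. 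The closing statement then follows from (iv) $\Rightarrow$ (i) by taking $\phi := \phi_0$, since $\phi_0 \nearrow \phi_0$ is automatic and $\phi_0 \nearrow \phi_1$ is assumed. The main obstacle is the degenerate-bar analysis in the (iv) $\Rightarrow$ (iii) step: since $\phi_0(x_i)$ and $\phi_1(x_i)$ are not constrained by $\nearrow$ when $i \notin \bigcup_j \phinail(j)$, one must deduce from the equality $\phi_0.D = \phi_1.D = D'$ that any mismatch between $\phi_0$ and $\phi_1$ at such indices cannot create a spurious non-degenerate bar along the straight-line path.
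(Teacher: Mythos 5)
Your cycle $(iii)\Rightarrow(i)\Rightarrow(ii)\Rightarrow(iv)\Rightarrow(iii)$ is a genuinely different arrangement from the paper, which proves $(i)\Leftrightarrow(ii)\Leftrightarrow(iii)$ and then attaches $(iii)\Rightarrow(iv)\Rightarrow(i)$. Your $(iii)\Rightarrow(i)$ and $(i)\Rightarrow(ii)$ match the paper's clopen argument. However, your $(ii)\Rightarrow(iv)$ step has a genuine gap: the constructed $\phi$ is not shown to lie in $\IncMaps(D,D')$. On the ``remaining'' endpoints (those on only degenerate bars) you send each $x_i$ to ``a point strictly between'' the relevant consecutive endpoints of $D'$, but these choices must be coordinated: if $(x_i,x_k)\in D$ is degenerate under $\phi_0$ and $\phi_1$, you need $\phi(x_i)=\phi(x_k)$ (otherwise $\phi$ produces a spurious non-degenerate bar with endpoints not in $D'$, so $\phi.D\neq D'$). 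Your construction as written permits $\phi(x_i)\neq\phi(x_k)$, and the sentence ``hence also under $\phi$'' asserts exactly what needs proving. The rule is also ambiguous when $\phi_0(x_i)=\phi_1(x_i)$ happens to equal an endpoint $x'_j$ but $x_i$ lies only on degenerate bars: ``strictly between'' does not specify which gap. The cleanest repair is to take $\phi:=\tfrac12(\phi_0+\phi_1)$, which automatically respects the equivalence classes of endpoints forced together by degenerate bars; but showing $\tfrac12(\phi_0+\phi_1)\in\IncMaps(D,D')$ is exactly the content of $(ii)\Rightarrow(iii)$, so the ``direct'' route collapses back to the paper's.

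Your $(iv)\Rightarrow(iii)$ step, which you flag as the main obstacle, is in better shape. The missing argument is really a counting one rather than a ``monotonicity'' one: $\phi\nearrow\phi_0$ implies every bar of $D$ non-degenerate under $\phi$ is non-degenerate under $\phi_0$ with the same image; since both sets of non-degenerate bars surject onto $D'$ (counted with multiplicities) and one is contained in the other, they coincide; hence degenerate under $\phi$ forces degenerate under $\phi_0$ and $\phi_1$, and the interpolation $\phi_t$ then collapses them as well. The paper avoids having to state this counting argument twice by routing $(iv)\Rightarrow(i)$ through $(ii)$, but your version is salvageable once this is made explicit. In summary: the $(iv)\Rightarrow(iii)$ sketch can be completed, but $(ii)\Rightarrow(iv)$ needs a different construction of $\phi$ --- as stated, the candidate $\phi$ need not be a morphism at all.
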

\begin{proof}
$[(i)\Rightarrow (ii)]$:
Let~$\phi_t$ be a path in~$\CategoryBarc(D,D')$ joining $\phi_0$ and~$\phi_1$. Given~$(b,d)\in D$ and~$(b',d')\in D'$, let
\[\I_{b,d}^{b',d'}:=\big\{t\in \I, \, \phi_t(b,d)=(\phi_t(b),\phi_t(d))=(b',d') \big\} \subseteq \I.\]
The sets $\I_{b,d}^{b',d'}$ are closed in~$\I$. They are also open since the map $\sum_{[(b,d),(b',d')]\in D\times D' } \mathbb{1}_{\I_{b,d}^{b',d'}}: \I \rightarrow \N$
is constant and equals the number of intervals in~$D'$. Therefore, $\I_{b,d}^{b',d'}=\I$ or  $\I_{b,d}^{b',d'}=\emptyset$.

$[(ii)\Rightarrow (iii)]$: For $t\in \I$, $\phi_t:=t\phi_0+(1-t)\phi_1$ is non-decreasing. Then, $\phi_t(b,d)$ equals a non-trivial interval $(b',d')\in D'$ if and only if $\phi_0(b,d)=\phi_1(b,d)=(b',d')$. All the other intervals $(b,d)\in D$ must then be trivialized, i.e. $\phi_0(b)=\phi_0(d)$ and $\phi_1(b)=\phi_1(d)$, so that $\phi_t(b)=\phi_t(d)$. This ensures that $\phi_t\in \CategoryBarc(D,D')$ since for instance $\phi_0\in \CategoryBarc(D,D')$. 

$[(iii)\Rightarrow (i)]$: This implication is immediate. From now on, we have $(i)\Leftrightarrow (ii)\Leftrightarrow (iii)$.

$[(iii)\Rightarrow (iv)]$: Let $\phi_t$ be the straight line interpolation between~$\phi_0$ and~$\phi_1$. Let  $0\leqslant j \leqslant \dim D'+1$. Let~$1\leqslant i \leqslant \dim D$ be such that~$i\notin \phinail_0(j)$. Without loss of generality, we assume that~$\phi_0(x_i)<x'_j$. Since $x'_j$ is the endpoint of a non-trivial interval of $D'$ and $\phi_0(D)=D'$, there must exist an endpoint $x_{i'}$, with $i'>i$, of an interval in~$D$ such that $\phi_0(x_{i'})=x'_j$. By the item~$(ii)$, we also have $\phi_1(x_{i'})=x'_j$. In turn, $\phi_1(x_i)\leqslant x'_j$ as~$\phi_1$ is non-decreasing. Therefore $\phi_{\frac{1}{2}}(x_i)< x'_j$, and in particular~$i\notin \phinail_{\frac{1}{2}}(j)$. We have therefore proved that $\phinail_{\frac{1}{2}}(j) \subseteq \phinail_{0}(j)$, so that~$\phi_{\frac{1}{2}} \nearrow \phi_0$. Similarly,~$\phi_{\frac{1}{2}} \nearrow \phi_1$.

$[(iv)\Rightarrow (i)]$: It is enough to show that if~$\phi \nearrow \phi_0$, then~$\phi$ and~$\phi_0$ are homotopic, as we then show in the exact same way that~$\phi \sim \phi_1$, which implies~$\phi_0 \sim \phi_1$. Let~$(b,d)\in D$ be a bounded interval (the case where $d=d'=\infty$ is dealt with similarly) such that~$\phi(b,d)=(b',d')$ for some~$(b',d')\in D'$. Then there are indices~$i<i'$ and~$j<j'$ such that $(b,d)=(x_i,x_{i'})$ and $(b',d')=(x'_j,x'_{j'})$. Since $\phinail(j)\subseteq \phinail_0(j)$ and $\phinail(j')\subseteq \phinail_0(j')$, we have $\phi_0(b,d)=(b',d')$ as well. Therefore, the images $\phi_0(b,d)$ of intervals $(b,d)$ such that $\phi(b,d)\in D'$ cover all the intervals in~$D'$. Hence, any other interval in~$D$ is trivialized by~$\phi_0$, which guarantees that the assertion $(ii)$ holds. We are done since $(ii)\Rightarrow (iii)\Rightarrow (i)$.
\end{proof}
\begin{remark}
\label{remark_index_invariant}
Two morphisms~$\phi_0,\phi_1 \in \IncMaps(D,D')$ with the same index are homotopic. In the case where~$D$ and~$D'$ belong to strata that differ by one dimension, the converse is true as well so that in this case the index is a (complete) homotopy invariant. To see this, observe that the index~$\phinail$ associated to a morphism~$\phi$ in the case $\dim D- \dim D'=1$ cover the endpoints of~$D$ and must consist of singletons~$\phinail(j)$ except for a unique~$\phinail(k)$ which is the unique pair~$\{i,i+1\}$ of consecutive endpoints of~$D$ collapsed by~$\phi$.
Therefore, if $\phi_0,\phi_1 \in \IncMaps(D,D')$ are homotopic, by Lemma~\ref{lemma_homotopic_zigzag_index} there is a third morphism $\phi\in \IncMaps(D,D')$ such that $\phi \nearrow \phi_0$ and $\phi \nearrow \phi_1$. By the above restriction on the index of morphisms in~$\IncMaps(D,D')$, this immediately implies that~$\phi_0$ and~$\phi_1$ have the same index.  
\end{remark}
\begin{proof}[Proof of Theorem~\ref{theorem_barcode_category}]
Let $D,D'\in \CatBarc$. By the assertion $(iv)$ of Lemma~\ref{lemma_homotopic_zigzag_index}, if $\phi_0,\phi_1\in \CategoryBarc(D,D')$ are two morphisms such that $\phi_0 \nearrow \phi_1$, then~$\phi_0$ and~$\phi_1$ belong to the same path connected component of~$\CategoryBarc(D,D')$. Since there are finitely many possible indices,~$\CategoryBarc(D,D')$ has finitely many path connected components.

Let~$\Omega$ be a path connected component of~$\CategoryBarc(D,D')$. Let~$\psi \in \Omega$. By the~$(iii)$ of Lemma~\ref{lemma_homotopic_zigzag_index}, the map
\[(t,\phi)\in [0,1] \times \Omega \longmapsto t\psi +(1-t)\phi \in \Omega \]
is a deformation retraction of $\Omega$ onto~$\{\psi\}$. Consequently,~$\Omega$ is contractible.
%
%By finiteness of all possible indices~$\phinail$, we can find a minimal~$\psi$ in~$\Omega$ for the pre-order~$\nearrow$, i.e. if~$\phi\in \CategoryBarc(D,D')$ is any another morphism satisfying~$\phi \nearrow \psi$, then~$\psi \nearrow \phi$ as well. In fact, the minimality of~$\psi$ together with the assertion~$(iv)$ of Lemma~\ref{lemma_homotopic_zigzag_index} implies that for any morphism~$\phi\in \Omega$, we have~$\psi \nearrow \phi$. We then have a deformation retraction
%
%\[(t,\phi)\in [0,1] \times \Omega \longmapsto t\psi +(1-t)\phi \in \Omega \]
%
%of $\Omega$ onto~$\{\psi\}$, which is well-defined by the assertion $(iii)$ of Lemma~\ref{lemma_homotopic_zigzag_index}. Consequently,~$\Omega$ is contractible.
%
\end{proof}
\subsection{Monodromies and polyhedral maps of fibers}
\label{sec:theorem_2}
%
%Knowing that the fiber of the persistence map is homogeneous over each barcode stratum and in fact enjoys the structure of a polyhedral complex, 
We now analyse how  fibers relate to each other as we cross barcode strata. We associate to each map of barcodes a map between the corresponding fibers as follows.
\begin{definition}
\label{definition_monodromy}
Let $D\in \Barc^{d+1}$ and $\phi\in \NonDecMaps$.
The {\em monodromy} $\Monodromy_{\phi}$ associated to $\phi$ is the map between fibers:
\[\Monodromy_{\phi}: f\in \persmap^{-1}(D)\longmapsto \phi \circ f \in \persmap^{-1}(\phi.D).\]
\end{definition}
Note that the monodromy is well-defined %: if $f\in \persmap^{-1}(D)$, then
since $\persmap$ is $\NonDecMaps$-equivariant by Lemma~\ref{lemma_commutes_PH_homeo}, 
\[
\persmap(\phi\circ f)=\phi.\persmap(f),
\]
and hence $\phi\circ f \in \persmap^{-1}(\phi.D)$. Furthermore, given another~$\phi'\in \NonDecMaps$ by definition 
\[
\Monodromy_{\phi' \circ \phi} = \Monodromy_{\phi'} \circ \Monodromy_{\phi},
\]
and in particular, if~$\phi$ is invertible then~$\Monodromy_\phi$ is a homeomorphism. Furthermore,  the monodromy assignment~$\phi \mapsto \Monodromy_\phi$ is continuous. We thus see that monodromies turn the inverse image~$\persmap^{-1}$ into a functor  as follows.%\footnote{Here we are only considering $\persmap^{-1}$ as a functor of~$\mathbf{Top}$-enriched categories. In fact,~$\persmap^{-1}$ is also continuous as a map of object spaces with respect to the natural topology on~$\Barc_\SComplex$ and the Gromov-Hausdorff metric on compact spaces.}
\begin{definition}
\label{definition_fiber_functor}
The {\em fiber functor} is a functor of ${\bf Top}$-enriched categories
\[\persmap^{-1}:\CategoryBarc\longrightarrow \mathbf{Top} \]
that sends a barcode~$D$ to the fiber~$\persmap^{-1}(D)$ and a morphism~$\phi\in \IncMaps(D,D')$ to the monodromy~$\Monodromy_\phi$. 
\end{definition}
\begin{remark}
\label{remark_functor_homotopy_category}
%
%Here we are only considering~$\persmap^{-1}$ as a functor of~$\mathbf{Top}$-enriched categories. 
$\persmap^{-1}$ also descends to define a functor of homotopy categories:
\[\persmap^{-1}:h\CategoryBarc\longrightarrow h\mathbf{Top}. \]
\end{remark}
%\vskip .05in
%
When~$\phi$ is not simplicial (Definition~\ref{definition_simplicial_morphism}), it may send a bounded interval~$(b,d)$ of~$D$ "to the middle of nowhere", i.e.~$\phi(b)=\phi(d)$ may not equal an interval endpoint in~$D'$. In turn, if this is the case, the monodromy~$\Monodromy_\phi$ is not a polyhedral map, i.e. it is not an affine map on each polyhedron that sends polyhedra to polyhedra surjectively; see Fig.~\ref{fig:bar_disappear} below.
\begin{figure}[H]
\centering
\includegraphics[width=0.6\textwidth]{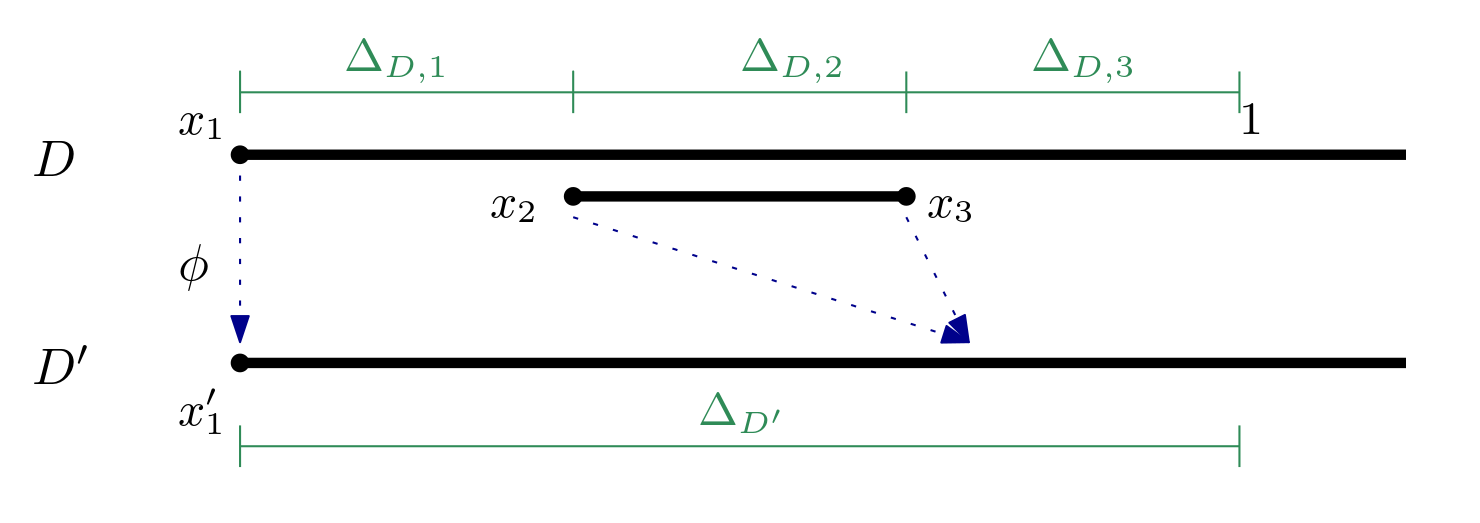}
\caption{The barcode~$D$ has dimension~$3$. By Theorem~\ref{theorem_fiber_bundle_polyhedral}, the polyhedral complex~$\persmap^{-1}(D)$ is made of polyhedra that are products $\StandSimplex_{D,1}\times \StandSimplex_{D,2}\times \StandSimplex_{D,3}$ of $3$ standard simplices. Each simplex~$\StandSimplex_{D,i}$ corresponds to the values of the filters in the fiber that are in-between the $i$-th endpoint $x_i$ and $(i+1)$-th endpoint~$x_{i+1}$ of~$D$. Likewise, a polyhedron in~$\persmap^{-1}(D')$ is simply a unique standard simplex $\StandSimplex_{D'}$. The map~$\phi$ collapses~$x_2$ and~$x_3$ strictly in-between~$x'_1$ and~$1$. In turn, the monodromy $\Monodromy_\phi$ collapses the second standard simplex~$\StandSimplex_{D,2}$ of any polyhedron in~$\persmap^{-1}(D)$ in the interior of~$\StandSimplex_{D'}$, and so is not a polyhedral map.
}
\label{fig:bar_disappear}
\end{figure}
However, simplicial maps induce polyhedral monodromies, as stated in the following result. 
\begin{proposition}
\label{proposition_monodromy}
Let $D,D'\in \CatBarc$ and $\phi\in \CategoryBarc(D,D')$. If~$\phi$ is simplicial, then  the monodromy $\Monodromy_\phi: \persmap^{-1}(D)\rightarrow \persmap^{-1}(D')$ is a polyhedral map. In particular, for any $\phi\in \CategoryBarc(D,D')$, the monodromy $\Monodromy_\phi$ is homotopic to a polyhedral map.
\end{proposition}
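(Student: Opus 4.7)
The plan is to first establish that $\Monodromy_\phi$ is a polyhedral map whenever $\phi$ is simplicial, and then to reduce the general statement to this case via a straight-line homotopy using Proposition~\ref{proposition_component_contains_simplicial}.

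Assume $\phi$ is simplicial, and fix a polyhedron $\PolytopeClosed$ of $\persmap^{-1}(D)$ corresponding to a filter stratum $\StratumF$. By Theorem~\ref{theorem_fiber_bundle_polyhedral}(a), $\PolytopeClosed$ is affinely isomorphic to a product $\StandSimplex_0\times\cdots\times \StandSimplex_{\dim D}$, and for each simplex $\sigma\in\SComplex$ the value $f(\sigma)$ is either fixed at some endpoint $x_i$ of $D$ or varies in a closed interval $[x_i,x_{i+1}]$, uniformly in $f\in\PolytopeClosed$. Since $\phi$ is simplicial, it is affine on each such interval, so $f\mapsto\phi\circ f$ is coordinate-wise affine and hence affine on $\PolytopeClosed$. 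Simpliciality further forces $\phi$ to send every interval $[x_i,x_{i+1}]$ either bijectively onto a pair of consecutive endpoints $[x'_j,x'_{j+1}]$ of $D'$ (``preserving'' factor) or to collapse it onto a single endpoint $x'_j$ (``contracting'' factor). Consequently, the pre-order on $\SComplex$ induced by $\phi\circ f$ for $f\in\PolytopeOpen$ depends only on the pre-order of $\StratumF$ and on the list of contracting intervals, so it is constant over $\PolytopeOpen$; this yields a single filter stratum $\StratumF'$ with $\Monodromy_\phi(\PolytopeOpen)\subseteq \persmap^{-1}(D')\cap\StratumF'$, and by continuity $\Monodromy_\phi(\PolytopeClosed)\subseteq \overline{\persmap^{-1}(D')\cap\StratumF'}$, which is itself a polyhedron of $\persmap^{-1}(D')$ by Theorem~\ref{theorem_fiber_bundle_polyhedral}. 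For the reverse inclusion, given $g$ in this target polyhedron I would explicitly construct a preimage $f\in\PolytopeClosed$ by setting $f(\sigma):=\phi^{-1}(g(\sigma))$ on preserving factors, $f(\sigma):=x_i$ on simplices originally fixed at an endpoint of $D$, and on contracting factors choosing $f(\sigma)\in[x_i,x_{i+1}]$ so as to realise the (finer) pre-order of $\StratumF$. This shows that $\Monodromy_\phi$ maps $\PolytopeClosed$ affinely and surjectively onto a polyhedron of $\persmap^{-1}(D')$, hence is polyhedral.

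For the general statement, let $\phi\in\CategoryBarc(D,D')$ be arbitrary. Proposition~\ref{proposition_component_contains_simplicial} produces a simplicial $\phi_1$ in the same path-component of $\CategoryBarc(D,D')$ as $\phi$, and by the equivalence $(i)\Leftrightarrow (iii)$ in Lemma~\ref{lemma_homotopic_zigzag_index} the linear interpolation $\phi_t:=(1-t)\phi+t\phi_1$ remains inside $\CategoryBarc(D,D')$ for every $t\in[0,1]$. Then $(t,f)\mapsto \phi_t\circ f$ is jointly continuous, and by the equivariance Lemma~\ref{lemma_commutes_PH_homeo} its image lies in $\persmap^{-1}(D')$; this defines a homotopy between $\Monodromy_\phi$ and the polyhedral map $\Monodromy_{\phi_1}$ through well-defined monodromies $\persmap^{-1}(D)\to\persmap^{-1}(D')$.

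The main technical obstacle is Step~2, namely checking that the image of $\PolytopeClosed$ is the \emph{entire} target polyhedron $\overline{\persmap^{-1}(D')\cap\StratumF'}$ rather than a proper sub-polyhedron. This hinges on the fact that along a contracting factor the full interval $[x_i,x_{i+1}]$ is sent by $\phi$ to a single endpoint $x'_j$, providing enough freedom to realise any pre-order of $\StratumF$ refining that of $\StratumF'$; this same freedom is what makes $\Monodromy_\phi$ generally non-injective, and corresponds geometrically to the collapse of contracting factors in $\PolytopeClosed$.
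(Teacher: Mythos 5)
Your proof is correct, and for the affineness step and the final reduction to the simplicial case via Proposition~\ref{proposition_component_contains_simplicial} and Lemma~\ref{lemma_homotopic_zigzag_index} you follow essentially the same line as the paper. Where you genuinely diverge is in proving that $\Monodromy_\phi$ maps each polyhedron $\PolytopeClosed$ \emph{onto} a polyhedron of $\persmap^{-1}(D')$. The paper does this by first isolating Lemma~\ref{lemma_alpha_beta}: since $\phi$ is simplicial, there are group homomorphisms $\phipost:\IncHomSpace_D\to\IncHomSpace_{D'}$ and $\phipre:\IncHomSpace_{D'}\to\IncHomSpace_D$ intertwining post-composition by $\phi$, and a short chain of equivalences (using these together with Lemma~\ref{lemma_commutes_PH_homeo}) gives both inclusions $\Monodromy_\phi(\PolytopeOpen)=\persmap_{|\StratumF'}^{-1}(D')$ simultaneously. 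You instead argue directly: you observe that the pre-order induced by $\phi\circ f$ is constant over $\PolytopeOpen$, which pins down the target stratum $\StratumF'$, and for surjectivity you build an explicit preimage of any $g$ in the target polyhedron by inverting $\phi$ on the preserving factors and freely choosing values in the contracting factors so as to realise the finer pre-order of $\StratumF$. Both arguments are valid. The paper's route is more structural and reuses the intertwining homomorphisms, while yours is more elementary and self-contained, at the cost of having to verify by hand that the constructed $f$ really lies in $\PolytopeClosed$ (the pre-order realisability check you allude to at the end) rather than reading surjectivity off from a group action. Your step-2 worry about proper sub-polyhedra is exactly the content that Lemma~\ref{lemma_alpha_beta} dispenses with in one stroke.
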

Given a barcode $D$, we denote by $\IncHomSpace_D$ the stabilizer of $D$, i.e. the group of homeomorphisms of the real line that fix the endpoints of $D$ and hence act trivially on~$D$. The proof of Proposition~\ref{proposition_monodromy} mainly relies on the following lemma.
\begin{lemma}
\label{lemma_alpha_beta}
Let~$\phi\in \IncMaps(D,D')$ be a simplicial morphism. Then there exists a group homomorphism
\[\phipost: \IncHomSpace_D \longmapsto \IncHomSpace_{D'},\]
such that for all $\homleft\in \IncHomSpace_D$, $\phi\circ \homleft= \phipost(\homleft) \circ \phi$. Similarly, there exists a group homomorphism
\[\phipre: \IncHomSpace_{D'} \longmapsto \IncHomSpace_{D},\]
such that for all~$\homright \in \IncHomSpace_{D'}$, $\phi\circ \phipre(\homright)= \homright \circ \phi$.
\end{lemma}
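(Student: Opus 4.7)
My plan is to construct $\phipost$ and $\phipre$ by exploiting the piecewise linear structure of $\phi$. The key observation is that, because $\phi$ is simplicial, its level sets are intervals whose endpoints are among $\{0, 1, x_1, \ldots, x_{\dim D}\}$, i.e.\ precisely the points fixed by any $\homleft \in \IncHomSpace_D$. Indeed, on each subinterval $[x_{i-1}, x_i]$ of $\I(D)$, $\phi$ is either constant (a \emph{collapse} piece, with value some $x'_j$ or $0$ or $1$) or a strictly increasing linear bijection onto $[\phi(x_{i-1}), \phi(x_i)]$ (a \emph{slanted} piece, joining two consecutive endpoints of $D'$).

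For the homomorphism $\phipost$, I would define
\[\phipost(\homleft)(y) := \phi(\homleft(t)), \quad \text{for any } t \in \phi^{-1}(y),\]
and the main point is well-definedness. Since $\homleft$ fixes every endpoint of $D$ (and $0, 1$), it preserves each connected component of each level set $\phi^{-1}(y)$, so $\phi \circ \homleft$ is constant on every level set of $\phi$ and thus factors through $\phi$. Equivalently, letting $\sim_\phi$ be the equivalence relation $t \sim_\phi t' \iff \phi(t) = \phi(t')$, the map $\phi$ induces a homeomorphism $\I / \sim_\phi \cong \I$, and $\homleft$ descends to a self-homeomorphism of the quotient, which is $\phipost(\homleft)$. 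Continuity, monotonicity, and bijectivity follow; fixing the endpoints of $D'$ follows because for each $x'_j$ there is (by $\phi.D = D'$ and simpliciality) some endpoint $x_i$ of $D$ with $\phi(x_i) = x'_j$, giving $\phipost(\homleft)(x'_j) = \phi(\homleft(x_i)) = \phi(x_i) = x'_j$. The relation $\phi \circ \homleft = \phipost(\homleft) \circ \phi$ and the fact that $\phipost$ is a group homomorphism are then immediate from the defining formula.

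For $\phipre$, I would construct it piecewise on the subdivision $\I(D)$. On a collapse piece $[x_{i-1}, x_i]$, set $\phipre(\homright)|_{[x_{i-1}, x_i]} := \mathrm{Id}$. On a slanted piece $[x_{i-1}, x_i]$, set $\phipre(\homright)|_{[x_{i-1}, x_i]} := (\phi|_{[x_{i-1},x_i]})^{-1} \circ \homright \circ \phi|_{[x_{i-1},x_i]}$. This is well-defined because $\homright$ fixes $\phi(x_{i-1}) = x'_{j(i-1)}$ and $\phi(x_i) = x'_{j(i)}$, so it sends $[x'_{j(i-1)}, x'_{j(i)}]$ into itself, allowing us to apply the local inverse of $\phi$. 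Both cases give self-homeomorphisms of $[x_{i-1}, x_i]$ fixing its endpoints, so the pieces assemble into a continuous, strictly increasing map $\phipre(\homright) \in \IncHomSpace_D$. A direct check on each piece yields $\phi \circ \phipre(\homright) = \homright \circ \phi$, and the group-homomorphism property is verified by computing on each piece.

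The main obstacle is the well-definedness of $\phipost$, which rests on the compatibility between the stabiliser $\IncHomSpace_D$ and the level sets of $\phi$; once the simpliciality of $\phi$ is used to identify these level sets with intervals having endpoints in $\{0, 1\} \cup \{x_i\}$, everything falls into place. A minor subtlety for $\phipre$ is to notice that $\homright$ actually preserves each sub-segment $[x'_{j(i-1)}, x'_{j(i)}]$ (not merely the full set of endpoints of $D'$), so that the local inverse of $\phi$ can be applied without leaving the slanted piece; this is where orientation preservation of $\homright$ and monotonicity of $\phi$ are needed.
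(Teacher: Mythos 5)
Your proof is correct and follows essentially the same approach as the paper: exploit the fact that $\phi$ is piecewise linear on the subdivision $\I(D)$, with pieces that are either collapses onto a vertex of $\I(D')$ or linear bijections onto an edge. Your construction of $\phipre$ — identity on collapse pieces, local conjugation $\phi^{-1}\circ\homright\circ\phi$ on slanted pieces — is identical to the paper's. Your construction of $\phipost$ is the same map, presented more conceptually: instead of the paper's piecewise case analysis defining $\phipost(\homleft)$ on each $\phi([x_i,x_{i+1}])$, you observe that $\homleft$ preserves the equivalence relation $\sim_\phi$ (since each nontrivial fibre $\phi^{-1}(y)$ is an interval with endpoints among $\{0,1,x_1,\dots,x_{\dim D}\}$, all fixed by $\homleft$), so $\phi\circ\homleft$ descends through the quotient homeomorphism $\I/\!\sim_\phi\,\cong\I$; this packages the paper's well-definedness check and the surjectivity of $\phi$ in one stroke and makes continuity and the homomorphism property immediate. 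The underlying key idea — that $\homleft\in\IncHomSpace_D$ is compatible with the fibres of a simplicial $\phi$ — is the same in both.
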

\begin{proof}
Let~$\homleft\in \Aut$ such that~$\homleft(D)=D$. Note that, since $\phi$ is simplicial, for any index $0\leqslant i\leqslant \dim D$, the following alternative holds:
\begin{itemize}
\item[(a)] Either $\phi([x_i,x_{i+1}])=x'_j$ for some index $0\leqslant j \leqslant \dim D'+1$;
\item[(b)] Or $\phi_{|[x_i,x_{i+1}]}$ is a linear bijection onto $[x'_j,x'_{j+1}]$ for some index $0\leqslant j \leqslant \dim D'$.
\end{itemize}
According to this alternative, we define $\homright:=\phipost(\homleft)$ on each $\phi([x_i,x_{i+1}])$ as follows:
\begin{itemize}
\item[(a)] Either $\phi([x_i,x_{i+1}])=x'_j$, in which case we set $\homright(x'_j):=x'_j$;
\item[(b)] Or $\phi_{|[x_{i},x_{i+1}]}$ is a linear bijection onto $[x'_j,x'_{j+1}]$, in which case we set $\homright_{|[x'_j,x'_{j+1}]}:=  \phi \circ\homleft\circ \phi_{|[x_{j},x_{j+1}]}^{-1}$.
\end{itemize}
Note that in case (b), $\homright$ is well-defined since $\homleft(D)=D$ implies that $\homleft$ restricts to a homeomorphism of each line segment $[x_i,x_{i+1}]$. Moreover, $\homright$ is defined on the whole unit interval $\I$ since $\phi$ is surjective. It is then clear that we have $\homright \circ \phi \circ \homleft=\phi$ on each $[x_i,x_{i+1}]$, so that the equality holds on $\I$ as desired. Note that $\homright \in \IncHomSpace_{D'}$ since 
\[\homright(D')=\homright(\phi\circ \homleft(D))=\phi.D=D'.\]
By construction, the association $\phipost: \homleft \mapsto \homright$ is a group homomorphism. Conversely, if we are rather given a map $\homright\in \Aut$ such that $\homright(D')=D'$, then we can construct the map $\homleft:= \phipre(\homright)$ satisfying $\phi \circ \homleft =\homright \circ \phi$ as follows:
\begin{itemize}
\item[(a)] Either $\phi([x_i,x_{i+1}])=x'_j$, in which case we set $\homleft([x_i,x_{i+1}]):=\mathrm{Id}_{|[x_i,x_{i+1}]}$;
\item[(b)] Or $\phi_{|[x_{i},x_{i+1}]}$ is a linear bijection onto $[x'_j,x'_{j+1}]$, in which case we set $\homleft_{|[x_i,x_{i+1}]}:=  \phi_{|[x_i,x_{i+1}]}^{-1} \circ\homright\circ \phi$.
\end{itemize}
This construction also yields a group homomorphism $\phipre: \homright \mapsto \homleft$.
\end{proof}

\begin{proof}[Proof of Proposition~\ref{proposition_monodromy}]
The second part of the statement follows directly from Proposition~\ref{proposition_component_contains_simplicial}, which states that any morphism of barcodes is homotopic to a simplicial map. Henceforth, we fix a simplicial morphism~$\phi\in \IncMaps(D,D')$ and show that the monodromy~$\Monodromy_\phi$ is a polyhedral map, i.e. it is affine on each polyhedron and sends polyhedra to polyhedra surjectively. 

Let $\StratumF\subseteq \Filt_\SComplex$ be a filter stratum. We have $\PolytopeClosed\cong \StandSimplex_0 \times \cdots \times \StandSimplex_{\dim D}$, where the standard simplex~$\StandSimplex_i$ corresponds to filter values that are in-between the endpoints~$x_{i}$ and~$x_{i+1}$
of~$D$. Since~$\phi$ is affine over~$[x_i,x_{i+1}]$, each coordinate function $\Monodromy_{\phi,\simplex}: f\in \PolytopeClosed \mapsto \Monodromy_\phi(f)(\simplex)\in \R$, for $\simplex\in \SComplex$, is affine as well. So the restriction of $\Monodromy_\phi$ to $\PolytopeClosed$ is an affine map, as desired.

It remains to show that the image $\Monodromy_\phi(\PolytopeClosed)$ equals a polyhedron $\overbar{\persmap_{|\StratumF'}^{-1}}(D')$. We fix a filter $f\in \PolytopeOpen$ and denote by~$\StratumF'$ the stratum containing~$\phi\circ f$. Note that, if~$g\in \Filt_\SComplex$ is a filter, then:
\begin{align}
g\in  \Monodromy_\phi(\PolytopeOpen)&\Longleftrightarrow \exists f'\in \PolytopeOpen, \, g=\phi\circ f'  \nonumber\\
 &\Longleftrightarrow  \exists \homleft \in \IncHomSpace_D, \, g=\phi\circ (\homleft \circ f)  \,  \tag*{by the equivariance Lemma~\ref{lemma_commutes_PH_homeo} }\\
  &\Longleftrightarrow  \exists \homright \in \IncHomSpace_{D'}, \, g=\homright \circ (\phi \circ f)  \,  \tag*{by Lemma~\ref{lemma_alpha_beta}}\\
  &\Longleftrightarrow   g\in \persmap_{\StratumF'}^{-1}(D'). \,  \tag*{by the equivariance Lemma~\ref{lemma_commutes_PH_homeo}}
\end{align}
%
\begin{comment}
Note that
%
\begin{align}
\Monodromy_\phi(\PolytopeOpen) &= \Monodromy_{\phi}(\IncHomSpace_D.f) \tag*{ by Lemma~\ref{lemma_commutes_PH_homeo}}\\
 &= [\phipost(\IncHomSpace_D)]^{-1}.\Monodromy_{\phi}(f)\subseteq  \persmap_{|\StratumF'}^{-1}(D'), \tag*{by Lemma~\ref{lemma_alpha_beta} }
\end{align}
%
\UT{I think it would be clearer to spell things out a bit and work with elements in  $\IncHomSpace_D $. Also, I don't think $[\phipost(\IncHomSpace_D)]^{-1}$ makes sense.}
and that conversely
%
\begin{align}
\persmap_{|\StratumF'}^{-1}(D') &= \IncHomSpace_{D'}. \Monodromy_{\phi}(f) \tag*{ by Lemma~\ref{lemma_commutes_PH_homeo}}\\
 &= [\Monodromy_{\phi}(\phipre(\IncHomSpace_{D'})]^{-1}f) \subseteq  \Monodromy_{\phi}(\persmap_{|\StratumF'}^{-1}(D)). \tag*{by Lemma~\ref{lemma_alpha_beta} }
\end{align}
\end{comment}
%
Therefore, $\Monodromy_\phi(\PolytopeOpen)$ equals $\persmap_{|\StratumF'}^{-1}(D')$ and in fact $\Monodromy_\phi(\PolytopeClosed)$ equals $\overbar{\persmap_{|\StratumF'}^{-1}}(D')$ since the image of a closed polyhedron via an affine map is again a closed polyhedron.
\end{proof}
The following is a consequence of the Propositions~\ref{proposition_component_contains_simplicial},~\ref{proposition_simplicial_morphism_composition_codim1} and~\ref{proposition_monodromy}:
\begin{corollary}
\label{corollary_monodromoy_homotopic_polyhedral_and_composition}
For any $\phi\in \CategoryBarc(D,D')$, the monodromy~$\Monodromy_\phi$ is homotopic to a polyhedral map. This polyhedral map may further be chosen as a composition 
\[\Monodromy_{\phi_k\circ \cdots \circ \phi_1}= \Monodromy_{\phi_k}\circ \cdots \circ \Monodromy_{\phi_1}\]
of monodromies~$\Monodromy_{\phi_k}$ that are polyhedral maps between fibers over barcodes that differ by one dimension.
\end{corollary}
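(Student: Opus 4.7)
The corollary is a direct synthesis of the three preceding propositions, bridged by the continuity of the fiber functor. The plan is as follows.

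First, given $\phi\in \CategoryBarc(D,D')$, I invoke Proposition~\ref{proposition_component_contains_simplicial} to obtain a simplicial morphism $\psi$ in the same path-connected component of $\CategoryBarc(D,D')$, joined to $\phi$ by a path $\phi_t$ (the straight-line homotopy works, but any path does). Because the assignment $\phi\mapsto \Monodromy_\phi$ is continuous (as noted just after Definition~\ref{definition_monodromy}), the formula $(t,f)\mapsto \Monodromy_{\phi_t}(f)=\phi_t\circ f$ exhibits a homotopy between $\Monodromy_\phi$ and $\Monodromy_\psi$ as maps $\persmap^{-1}(D)\to \persmap^{-1}(D')$. This already reduces the first claim of the corollary to the simplicial case, which is Proposition~\ref{proposition_monodromy}.

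Second, I apply Proposition~\ref{proposition_simplicial_morphism_composition_codim1} to the simplicial morphism $\psi$, decomposing it as $\psi=\phi_k\circ \cdots\circ \phi_1$ where each $\phi_i\in \CategoryBarc(D_i,D_{i+1})$ is simplicial, $D_1=D$, $D_{k+1}=D'$, and $0\leqslant \dim D_i-\dim D_{i+1}\leqslant 1$. Strict functoriality of the monodromy assignment (immediate from $\Monodromy_\phi(f)=\phi\circ f$ and associativity of composition) yields
\[
\Monodromy_\psi=\Monodromy_{\phi_k}\circ \cdots \circ \Monodromy_{\phi_1},
\]
and each factor $\Monodromy_{\phi_i}$ is a polyhedral map by Proposition~\ref{proposition_monodromy}. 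Any intermediate step with $\dim D_i=\dim D_{i+1}$ corresponds to $\phi_i\in \IncHomSpace$, a simplicial automorphism within the stratum $\StratumD_{D_i}$; such a factor can be absorbed into an adjacent codimension-one factor by pre- or post-composition (the composite remains simplicial), so that, if desired, every step in the final factorisation drops the dimension by exactly one.

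The only non-routine point is the translation of a homotopy of morphisms in $\CategoryBarc(D,D')$ into a homotopy of fiber maps, and this is handled in the first paragraph by continuity of $\phi\mapsto \Monodromy_\phi$. Everything else is straightforward bookkeeping: Proposition~\ref{proposition_component_contains_simplicial} handles the reduction to the simplicial case, Proposition~\ref{proposition_simplicial_morphism_composition_codim1} supplies the codimension-one decomposition, Proposition~\ref{proposition_monodromy} supplies the polyhedrality of each factor, and functoriality of $\persmap^{-1}$ assembles these into a composition of polyhedral maps between fibers over adjacent strata.
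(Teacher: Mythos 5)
Your proof is correct and matches the paper's intended argument: the paper states this corollary without proof, only citing Propositions~\ref{proposition_component_contains_simplicial},~\ref{proposition_simplicial_morphism_composition_codim1}, and~\ref{proposition_monodromy}, and your write-up supplies exactly the assembly of those ingredients plus the continuity of $\phi\mapsto\Monodromy_\phi$ noted after Definition~\ref{definition_monodromy}. The observation that dimension-preserving factors lie in $\IncHomSpace$ and can be absorbed into adjacent steps is a sensible extra remark reconciling the statement's wording with the conclusion of Proposition~\ref{proposition_simplicial_morphism_composition_codim1}, which allows the dimension to drop by $0$ or $1$.
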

\begin{remark}
\label{remark_monodromy_projection}
The monodromy associated to a simplicial morphism~$\phi$ acts as a projection map on each polyhedron~$\PolytopeClosed$ of the fiber. Indeed, recalling that~$\PolytopeClosed$ is (isomorphic to) a product $\StandSimplex_0 \times \cdots \times \StandSimplex_{\dim D}$ of standard simplices, we have the commutative diagram
\begin{center}
\begin{tikzpicture}[baseline=(current  bounding  box.north)]
\node[] (a) at (10,-3) {$\Monodromy_\phi(\PolytopeClosed)$};
\node[] (b) at (0,-1) {$\StandSimplex_0 \times \cdots \times \StandSimplex_{\dim D} $};

\node[] (e) at (10,-1) {$\StandSimplex_0 \times \cdots \times \StandSimplex_{\dim D}/ \prod_{\phi(x_i)=\phi(x_{i+1})} \StandSimplex_i$};

\node[] (c) at (0,-3) {$\PolytopeClosed$};

\draw[->] (b)--(e) node[midway, above] {$\pi$};

\draw[->] (c)--(a) node[midway, above] {$\Monodromy_{\phi}$};
\draw[->] (e)--(a) node[midway, right] {$\cong$};
\draw[->] (b)--(c) node[midway, left] {$\cong$};

\end{tikzpicture}
\end{center}
where $\pi$ is the projection map. In other words, the product of simplices describing the image polyhedron~$\Monodromy_\phi(\PolytopeClosed)$ is obtained from the product describing~$\PolytopeClosed$ by collapsing standard simplices~$\StandSimplex_i$ whenever~$\phi$ collapses the~$i$-th and~$i+1$-th endpoint of~$D$.
\end{remark}
\newpage

\section{The space of barcodes is homotopically stratified}
\label{section_stratification_barcodes_and_entrance_path_category}

In this section, we take a closer look at the stratification of barcodes and show that~$\Barc_K$ is homotopically stratified. This naturally leads us to introducing the entrance path category of barcodes. We observe that the entrance path category is isomorphic to the homotopy category,~$h\CategoryBarc$, of barcodes. Recall from Theorem~\ref{theorem_barcode_category} that~$\CategoryBarc$ is homotopy discrete. Similarly, we find that the space of entrance paths between fixed barcodes is contractible.
\subsection{Regularity of the stratification of barcodes}
\label{section_barcode_stratification_regularity}

The notion of stratification used so far (Definition~\ref{definition_stratification}) is a very weak one, as it does not impose restrictions on the neighborhoods of strata. The space of filters~$\Filt_\SComplex$ is in fact Whitney stratified, being a polyhedron in~$\I^\SComplex\subseteq \R^{\SComplex}$. However,~$\CatBarc$ is the quotient of~$\Filt_\SComplex$ induced by~$\persmap$ (see Proposition~\ref{prop_quotient_topology}), so the regularity of its stratification is less apparent. We show that the space~$\CatBarc$ of barcodes in the image of the persistence map is homotopically stratified in the sense of Quinn~\citep{quinn1988homotopically}. Stratified coverings over homotopically stratified spaces are classified by the entrance path category, which we introduce and analyse in the next section in the case of barcodes. 

The local neighborhoods in a homotopically stratified space~$\TopSpace$ are defined in terms of paths that cross strata in decreasing order of dimension:
%We choose to consider homotopically stratified spaces for two main reasons. On the one hand, the resulting category remains very general as it contains Siebenmann locally cone-like stratified spaces, Thom-Mather stratified spaces, Goresky-MacPherson topologically stratified pseudomanifolds and Whitney stratified spaces. On the other hand, stratified coverings over homotopically stratified spaces are  classified by the entrance path category, which we introduce and analyse in the next section in the case of barcodes. The local neighborhoods in a homotopically stratified space~$\TopSpace$ are defined in terms of paths that cross strata in decreasing order of dimension. 
%
\begin{definition}
\label{definition_entrance_path}
Let~$\TopSpace$ be a stratified space. A continuous path~$\gamma\in \TopSpace^{I}$ is an {\em entrance path} if for any $0\leqslant t\leqslant t' \leqslant 1$, the stratum containing~$\gamma(t)$ has greater or equal dimension than that containing~$\gamma(t')$.
\end{definition}
\begin{definition}
\label{definition_homotopy_link}
Let~$\TopSpace$ be a stratified space. An entrance path~$\gamma$ is {\em elementary} if it stays in a unique stratum until the very last moment, that is if~$\gamma([0,1))$ belongs to a fixed stratum. Given two strata~$\TopSpace^i$ and~$\TopSpace^j$, $j<i$, the {\em homotopy link}~$\Holink(\TopSpace^i,\TopSpace^j)$ is the space of elementary paths starting in~$\TopSpace^i$ and ending in~$\TopSpace^j$ with the compact open topology. 
\end{definition}
\begin{definition}
\label{definition_homotopically_stratified}
A stratified space~$\TopSpace$ is {\em homotopically stratified} if it satisfies the following conditions for any pair of strata~$\TopSpace^i$ and~$\TopSpace^j$, where $j<i$:
\begin{enumerate}
\item The inclusion $\TopSpace^j \hookrightarrow \TopSpace^i\cup \TopSpace^j$ is {\em tame}, which means that there is a strong deformation retraction  of a neighborhood of~$\TopSpace^j$ in~$\TopSpace^i\cup \TopSpace^j$ onto~$\TopSpace^j$ such that points remain in the same stratum until the very last moment during the deformation; 
\item The evaluation at time $t=1$
\[\mathrm{ev}_1: \gamma \in \Holink(\TopSpace^i,\TopSpace^j) \longmapsto \gamma(1) \in \TopSpace^j \]
is a fibration.
\end{enumerate}
\end{definition}
Note that, in the original formulation of homotopically stratified spaces~\cite{quinn1988homotopically}, the strata are not necessarily topological manifolds, and so we should really refer to the spaces of Definition~\ref{definition_homotopically_stratified} as manifold stratified spaces, as done in~\cite{weinberger1994topological} for instance. However, this distinction is irrelevant for our purposes, since the strata in~$\CatBarc=\persmap(\Filt_\SComplex)$ are manifolds. 
\begin{proposition}
\label{proposition_barcodes_homotopically_stratified}
The filtered space~$\CatBarc$ is homotopically stratified. 
\end{proposition}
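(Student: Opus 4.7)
The plan is to verify the two conditions of Definition~\ref{definition_homotopically_stratified} by exploiting the chart $\ChartBarc: \StandSimplex^{\dim \StratumD} \to \overbar{\StratumD}$ of Remark~\ref{remark_extension_coordchart_barcodes}, the explicit local structure of neighborhoods of barcodes given by Proposition~\ref{proposition_bottleneck_ball}, the continuity of the $\NonDecMaps$-action (Proposition~\ref{proposition_continuous_barcode_action}), and the fact that each stratum is an $\IncHomSpace$-orbit on which $\IncHomSpace$ acts transitively and continuously.

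For the tameness condition, fix strata $\StratumD_i$ and $\StratumD_j$ with $\StratumD_j \subseteq \overbar{\StratumD_i}$ and $j < i$. Given $D'' \in \StratumD_j$, Proposition~\ref{proposition_bottleneck_ball} combined with Lemma~\ref{lemma_closure_barcode_strata} implies that every $D \in \StratumD_i$ sufficiently close to $D''$ has its $i$ distinct bounded endpoints clustering around the $j$ endpoints of $D''$ in a fixed combinatorial pattern determined by the stratum $\StratumD_j$. I would define a retraction $r$ on a small neighborhood $N$ of $\StratumD_j$ in $\StratumD_i \cup \StratumD_j$ by collapsing each cluster of endpoints to its mean value (thereby suppressing the zero-length bars that arise), and let $h_t: N \to \StratumD_i \cup \StratumD_j$ be the straight-line interpolation between the identity and $r$ in the endpoint coordinates provided by $\ChartBarc^{-1}$. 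By construction $h_t$ fixes $N \cap \StratumD_j$ pointwise; for $t < 1$ no cluster has yet collapsed so $h_t$ preserves the stratum $\StratumD_i$; only at $t = 1$ do clusters collapse and $h_1(D) \in \StratumD_j$. This yields the strong deformation retraction tame with respect to the stratification.

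For the Serre fibration property of $\mathrm{ev}_1: \Holink(\StratumD_i, \StratumD_j) \to \StratumD_j$, it suffices to lift every homotopy $\alpha: I \times I^n \to \StratumD_j$ given a starting lift $\tilde{\alpha}_0: I^n \to \Holink(\StratumD_i, \StratumD_j)$ with $\mathrm{ev}_1 \circ \tilde{\alpha}_0 = \alpha(0, \cdot)$. I would use the chart $\ChartBarc^{-1}$ to identify $\alpha$ with a continuous family of tuples $(y_1(t,s), \dots, y_j(t,s)) \in \mathring{\StandSimplex}^{\dim \StratumD_j}$, and take the piecewise linear homeomorphism $\phi_{t,s} \in \IncHomSpace$ sending the tuple of $\alpha(0,s)$ to that of $\alpha(t,s)$, which depends continuously on $(t,s)$ and satisfies $\phi_{0,s} = \mathrm{Id}$. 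The desired lift is then $\tilde{\alpha}_t(s) := \phi_{t,s} \cdot \tilde{\alpha}_0(s)$, which by Proposition~\ref{proposition_continuous_barcode_action} and the fact that $\IncHomSpace$ acts by stratum-preserving homeomorphisms is an elementary entrance path ending precisely at $\alpha(t,s)$.

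The main obstacle is ensuring that the local constructions patch together into global objects. For tameness, this requires that the clustering pattern determining $r$ is locally constant along $\StratumD_j$, which follows from the description of strata as $\IncHomSpace$-orbits together with Lemma~\ref{lemma_closure_barcode_strata}. For the fibration, the piecewise linear choice of $\phi_{t,s}$ is canonical once the source and target tuples are fixed and its continuity in the chart coordinates is manifest; the only genuine verification is that the resulting $\tilde{\alpha}_t(s)$ remains an elementary entrance path, which is true because elements of $\IncHomSpace$ preserve the stratification and the nesting of stratum closures.
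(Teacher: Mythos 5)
Your proof follows essentially the same strategy as the paper: both verify tameness and the $\Holink$-fibration property by exploiting the chart $\ChartBarc$ and the $\IncHomSpace$-action. For the fibration condition your construction of the lift via the family of piecewise-linear maps $\phi_{t,s}\in\IncHomSpace$ transporting the endpoint tuple of $\alpha(0,s)$ to that of $\alpha(t,s)$ is precisely the paper's construction of the section $\phi^{\gamma}_h$ (the paper uses the equivalent criterion of finding a section of $\Holink^{[0,1]} \to \StratumD'^{[0,1]}\times_{\StratumD'}\Holink$; since your construction does not use anything special about $I^n$, it in fact yields the Hurewicz fibration required and not merely the Serre lifting you state). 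For tameness, you carry out explicitly in $\Barc$ (collapsing clusters to their means via a straight-line homotopy) what the paper does abstractly by pushing a retraction of a simplex neighborhood of $\ChartBarc^{-1}(\StratumD')$ through $\ChartBarc$; these are the same idea in different coordinates, and arguably your version is more careful about the quotient identifications that $\ChartBarc$ makes on the boundary.

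One imprecision worth flagging in your tameness argument: the claim that the clustering pattern near a fixed $D''\in\StratumD_j$ is a ``fixed combinatorial pattern determined by $\StratumD_j$,'' and later that it is ``locally constant along $\StratumD_j$,'' is not correct as stated. Several faces of $\StandSimplex^{\dim\StratumD_i}$ may map to $\StratumD_j$ under $\ChartBarc$, so barcodes $D\in\StratumD_i$ near $D''$ can exhibit different cluster structures (e.g.\ a short bar collapsing near different endpoints, or in the interior of a gap). This does not break the proof: the retraction $r$ is defined barcode-by-barcode from the clusters of $D$ itself, and continuity of $D\mapsto r(D)$ holds provided the neighborhood of $\StratumD_j$ is taken small enough (with radius adapted to the minimum endpoint gap of each $D''$). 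You should replace the ``fixed pattern'' justification with this pointwise argument; as written, the stated reason is false even though the construction is sound.
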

\begin{proof}
Let~$\StratumD$ and~$\StratumD'$ be two barcode strata with~$\StratumD' \subseteq \bar{\StratumD}$.  Recall that the coordinate chart~$\ChartBarc$ extends to a continuous, surjective, stratum-preserving map~$\ChartBarc:  \StandSimplex^{\dim \StratumD}\rightarrow \bar{\StratumD}  $; see  Eq.~\eqref{eq_extension_mu_inverse}. Then, the inverse image~$\ChartBarc^{-1}(\StratumD')$ of~$\StratumD'$ is a union of faces in~$\StandSimplex^{\dim \StratumD}$. Any collection of faces has a neighborhood in $\StandSimplex^{\dim \StratumD}$ that deformation retracts back to itself such that points in the interior of the simplex are mapped to points in the interior but at the last moment.
%Thus the inclusion~$\ChartBarc^{-1}(\StratumD')\hookrightarrow \ChartBarc^{-1}(\StratumD')\cup \mathring{\StandSimplex}^{\dim \StratumD}$ is tame.
Pulling back neighborhoods and composing the deformation retraction with~$\ChartBarc$, we get the required  deformation retraction of a neighborhood of $\StratumD'$ in~$\StratumD \cup \StratumD'$ onto~$\StratumD'$. Therefore, the inclusion $\StratumD' \hookrightarrow \StratumD\cup \StratumD'$ is tame.   

To check that the evaluation map~$\mathrm{ev}_1: \gamma \in \Holink(\StratumD,\StratumD') \mapsto \gamma(1) \in \StratumD'$ is a fibration, it is enough (see e.g.~\cite{hurewicz1955concept}) to find a section for the map:
\[(\tilde{\gamma}_h)_{h\in [0,1]} \in \Holink(\StratumD,\StratumD')^{[0,1]} \longmapsto \big((\mathrm{ev}_1\circ \tilde{\gamma}_h)_{h\in [0,1]}, \tilde{\gamma}_0\big)\in \StratumD'^{[0,1]} \times_{\StratumD'} \Holink(\StratumD,\StratumD'), \]
where~$\StratumD'^{[0,1]} \times_{\StratumD'} \Holink(\StratumD,\StratumD')$ is the fiber product
\[\StratumD'^{[0,1]} \times_{\StratumD'} \Holink(\StratumD,\StratumD'):= \big\{(\gamma, \tilde{\gamma}) \in \StratumD'^{[0,1]} \times \Holink(\StratumD,\StratumD')\,  |  \, \gamma(0)=\tilde{\gamma}(1)\big\}.\]
Using the coordinate chart~$\ChartBarc: \mathring{\StandSimplex}^{\dim \StratumD'}\overset\cong \longrightarrow \StratumD'$, we may view a path~$\gamma: h\in [0,1] \mapsto \gamma(h)\in \StratumD'$ via its coordinates $0<x_1^\gamma(h)<\cdots <x_{\dim \StratumD'}^\gamma(h)<1$. Given~$0\leqslant h \leqslant 1$, let~$\phi^{\gamma}_h\in \IncHomSpace$ be the map that sends the endpoint values~$x_i^{\gamma}(0)$ to~$x_i^{\gamma}(h)$, and is extended linearly on each line segment $[x_i^{\gamma}(0),x_{i+1}^{\gamma}(0)]$. Clearly, the association~$(\gamma,h) \mapsto \phi^{\gamma}_h$ is continuous, and we have $\gamma(h)=\phi_h^{\gamma}.\gamma(0)$. Then, the map
\[(\gamma, \tilde{\gamma}) \in \StratumD'^{[0,1]} \times_{\StratumD'} \Holink(\StratumD,\StratumD') \longmapsto (h \mapsto \phi_h^{\gamma}. \tilde{\gamma})\in  \Holink(\StratumD,\StratumD')^{[0,1]}\]
is the desired section. 
\end{proof}

\subsection{The entrance path category of barcodes}
\label{section_entrance_path_category_barcodes}

The entrance path category is the suitable generalisation of the fundamental groupoïd for stratified spaces where ordinary paths are replaced by entrance paths between points; see~\cite{treumann2009exit} for the original constructions.

%We show that the homotopy category of the barcode category studied in the previous section is isomorphic to the entrance category of the associated barcode space. The fiber functor is thus seen to give rise to a functor on the entrance category.
%
\begin{definition}
\label{definition_entrance_path_category}
Let~$\TopSpace$ be a stratified space. The {\em entrance path category} of~$\TopSpace$ is $\Ent(\TopSpace):=h\Path_\leq(X)$, where~$\Path_\leq(X)$ is the topologically enriched category with~$\TopSpace$ as the set of objects and spaces of entrance paths equipped with the compact open topology as morphisms. 
In other words,~$\Ent(\TopSpace)$ has the points of~$\TopSpace$ as objects and the homotopy classes of entrance paths as morphisms.
\end{definition}

\begin{remark}
In order to make~$\Path_\leq(X)$ into a category where concatenation of paths defines a strictly associative composition of morphisms, paths of all positive lengths need to be allowed. This is analogous to replacing loop spaces by Moore loop spaces and the resulting morphism spaces are homotopy equivalent. In particular the definition of $\Ent(\TopSpace)$ is not affected. We will ignore this subtlety in what follows.
%Note that, strictly speaking,~$\Path_<(X)$ is not a category as concatenation is not associative. We do not worry about this subtlety because, up to homotopy, one can always replace the space of unit length paths with that of Moore paths, for which composition is associative.
\end{remark}

%In this section, we embed the category~$h\CategoryBarc$ into~$\Ent(\CatBarc)=h\Path_<(\CatBarc)$. 
%For this, we view a morphism

Let~$\phi$ in~$\CategoryBarc$ be a morphism  between barcodes~$D$ and~$D'$. Define the path:
\[\gamma_\phi(t):=(t\phi+ (1-t)\mathrm{Id}).D.\]
For times~$t<1$, $t\phi+ (1-t)\mathrm{Id}$ is a homeomorphism of the unit interval and so~$\gamma_\phi(t)$ stays in the stratum containing~$D$. Hence,~$\gamma_\phi(t)$ is in fact an elementary entrance path. It is then clear that the association 
\[\phi \in \CategoryBarc(D,D')\longmapsto \gamma_\phi \in \Path_\leq(\CatBarc)(D,D') \]
is continuous. %Although this correspondance is clearly not surjective, i.e. there are much more entrance paths than there are morphisms~$\phi$, we show that it yields a functorial isomorphism between the categories~$h\CategoryBarc$ and~$\Ent(\CatBarc)$. 
\begin{proposition}
\label{proposition_entrance_path_category_equal_catbarc}
The association~$[\phi]\mapsto [\gamma_\phi]$ is functorial and induces an isomorphism of categories:
\[h\CategoryBarc\cong \Ent(\CatBarc).\]
\end{proposition}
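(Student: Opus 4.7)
The functor is the identity on objects, so it suffices to verify that $[\phi]\mapsto[\gamma_\phi]$ (i) is well-defined on homotopy classes of morphisms, (ii) sends identities to identities and respects composition, and (iii) is a bijection on each morphism set $\CategoryBarc(D,D')\to \Ent(\CatBarc)(D,D')$. For (i) I would note that the paths $\gamma_\phi$ are genuinely entrance paths: for $t<1$ the interpolant $t\phi+(1-t)\mathrm{Id}$ lies in $\IncHomSpace$ (being a non-decreasing map that is strictly increasing for $t<1$), so $\gamma_\phi(t)\in \StratumD_D$ while $\gamma_\phi(1)=\phi.D=D'$. Given a path $\phi_s$ in $\CategoryBarc(D,D')$ between $\phi_0$ and $\phi_1$ (which exists by assumption of $[\phi_0]=[\phi_1]$), the map $(s,t)\mapsto (t\phi_s+(1-t)\mathrm{Id}).D$ is continuous by Proposition~\ref{proposition_continuous_barcode_action}, and for each fixed $s$ the path $\gamma_{\phi_s}$ is an elementary entrance path by the same argument, giving an entrance-path homotopy.

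For (ii) the identity morphism yields the constant path, which is the identity in $\Ent(\CatBarc)$. For composition, given $\phi\in\CategoryBarc(D,D')$ and $\phi'\in\CategoryBarc(D',D'')$, I would exhibit a homotopy between $\gamma_{\phi'\circ\phi}$ and the concatenation $\gamma_{\phi'}\ast\gamma_\phi$ by the two-parameter family $H(s,t):=(\psi_{s,t}).D$ where $\psi_{s,t}$ linearly interpolates, as $s$ varies, between $t(\phi'\circ\phi)+(1-t)\mathrm{Id}$ and the piecewise map defining the concatenation; for each intermediate $s$ and each $t<1$, $\psi_{s,t}\in\IncHomSpace$ since a convex combination of strictly increasing maps is strictly increasing, so the homotopy respects the stratification.

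For (iii), I would prove fullness and faithfulness separately. For fullness, given an entrance path $\gamma$ from $D$ to $D'$, subdivide $[0,1]$ into finitely many subintervals on which $\gamma$ is elementary (i.e.\ the image of each open subinterval lies in a single stratum and the endpoints lie in strata of no greater dimension); by functoriality it suffices to handle one elementary segment, so assume $\gamma([0,1))\subset \StratumD_D$ and $\gamma(1)=D'\in\overbar{\StratumD_D}$. Lemma~\ref{lemma_closure_barcode_strata} produces $\phi\in\NonDecMaps$ with $\phi.D=D'$, i.e.\ $\phi\in \CategoryBarc(D,D')$. To show $\gamma\sim\gamma_\phi$, I would pull $\gamma$ back through the coordinate chart $\ChartBarc$: Eq.~\eqref{eq_extension_mu_inverse} gives a continuous surjection $\ChartBarc\colon \StandSimplex^{\dim\StratumD_D}\to \overbar{\StratumD_D}$ under which the path $\gamma_\phi$ has an obvious affine lift (the straight line from $\ChartBarc^{-1}(D)$ to a preimage of $D'$ in the boundary), and $\gamma|_{[0,1)}$ has a canonical lift by $\ChartBarc^{-1}$ on the open simplex; convexity of the closed simplex then provides the required straight-line homotopy keeping the interior in the interior. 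For faithfulness I would use the index characterization of Lemma~\ref{lemma_homotopic_zigzag_index}: an entrance-path homotopy between $\gamma_{\phi_0}$ and $\gamma_{\phi_1}$ lifts (again via $\ChartBarc$ and Proposition~\ref{prop_quotient_topology}) to a family of paths in the closed simplex whose endpoint behaviour forces $\phi_0$ and $\phi_1$ to share all the data captured by the index, hence $\phi_0\sim \phi_1$ by Lemma~\ref{lemma_homotopic_zigzag_index}~(iv).

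The hardest step is fullness, specifically showing that an arbitrary elementary entrance path lies in the same homotopy class as some $\gamma_\phi$: the delicate point is that $\ChartBarc$ need not be injective on the boundary of the simplex (Remark~\ref{remark_extension_coordchart_barcodes}), so one must either choose compatible lifts of $\gamma$ and $\gamma_\phi$ or, equivalently, homotope the two boundary endpoints within the appropriate fiber of $\ChartBarc$ before applying convex interpolation. Once this is handled carefully the remaining verifications are routine reparametrization arguments.
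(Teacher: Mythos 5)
Your well-definedness and functoriality arguments are sound and essentially match the paper's, modulo slightly different bookkeeping in the concatenation homotopy. The real gap is in fullness. You take the bijection $\phi$ from Lemma~\ref{lemma_closure_barcode_strata} and then try to show $\gamma\sim\gamma_\phi$ via lifts through $\ChartBarc$ and convex interpolation in $\StandSimplex^{\dim\StratumD_D}$. This has two problems, only one of which you flag. First, as you note, $\ChartBarc$ is not injective on the boundary; but the more serious obstruction is that the $\phi$ Lemma~\ref{lemma_closure_barcode_strata} hands you need not induce a path $\gamma_\phi$ that collapses the \emph{same} bars as $\gamma$. An elementary entrance path $\gamma$ determines, for each bar $(b,d)\in D$, whether $(b^\gamma(t),d^\gamma(t))$ converges to a bar of $D'$ or shrinks to length zero as $t\to 1^-$; this partition is a homotopy invariant (this is precisely condition (ii)--(iii) of Lemma~\ref{lemma_homotopic_entrance_paths}). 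Different morphisms $\phi\in\CategoryBarc(D,D')$ realise different such partitions, so the generic $\phi$ from Lemma~\ref{lemma_closure_barcode_strata} will generally land $\gamma_\phi$ in the wrong homotopy class, and no amount of adjusting lifts will repair this.

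There is also an unacknowledged convergence issue with your lift: the canonical lift $\ChartBarc^{-1}\circ\gamma$ on $[0,1)$ need not extend continuously to $t=1$, because a bar that collapses can have both endpoints oscillate while the length tends to zero. And even if lifts could be chosen compatibly, the fiber $\ChartBarc^{-1}(D')$ in the closed simplex is not convex in general (different degeneration patterns give distinct preimages whose midpoint is injective), so the naive straight-line homotopy would leave the fiber at $t=1$.

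The fix, which is what the paper does, is to \emph{construct} $\phi$ directly from the limit behaviour of $\gamma$: for each bar $(b',d')\in D'$ the unique $(b,d)\in D$ with $(b^\gamma(t),d^\gamma(t))\to(b',d')$ determines $\phi(b)=b'$, $\phi(d)=d'$; for bars that collapse, set $\phi(b)=\phi(d)$ to any compatible value; then extend to a non-decreasing map. By construction $\gamma_\phi$ and $\gamma$ share the data of Lemma~\ref{lemma_homotopic_entrance_paths}~(ii), so they are homotopic by that lemma with no need to lift through $\ChartBarc$ at all. Your faithfulness sketch is on the right track (comparing the index of Lemma~\ref{lemma_homotopic_zigzag_index} with the limit data), but the citation to Proposition~\ref{prop_quotient_topology} is out of place; the clean argument simply compares condition~(ii) of Lemma~\ref{lemma_homotopic_zigzag_index} with condition~(ii) of Lemma~\ref{lemma_homotopic_entrance_paths}.
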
 

Before proving this, we first characterize when elementary entrance paths are homotopic, in a similar fashion to Lemma~\ref{lemma_homotopic_zigzag_index} for morphisms of barcodes. Recall that an entrance path~$\gamma$ from~$D$ to~$D'$ is elementary if it stays in a unique stratum until the very last moment, that is if~$\gamma([0,1))$ belongs to the stratum~$\StratumD_D$. Using the coordinate chart~$\ChartBarc:  \mathring{\StandSimplex}^{\dim D} \overset\cong \longrightarrow \StratumD_D $, we may view~$\gamma_{|[0,1)}$ via its coordinates $0<x_1^{\gamma}(t)<\cdots <x_{\dim D }^{\gamma}(t)<1$, $0\leqslant t <1$. Alternatively, for each interval~$(b,d)\in D$, there is a continuously evolving interval~$(b^\gamma(t),d^\gamma(t))$ starting at~$(b,d)$, $0\leqslant t <1$, and together the intervals~$(b^\gamma(t),d^\gamma(t))$ form the barcode~$\gamma(t)$. 
\begin{lemma}
\label{lemma_homotopic_entrance_paths}
Let $\gamma_0,\gamma_1 \in \Path_\leq(\CatBarc)(D,D')$ be elementary entrance paths. Then the following are equivalent:
\begin{itemize}
\item[(i)]  The two entrance paths are homotopic %,i.e. $\gamma_0\sim \gamma_1$, 
through elementary entrance paths.
\item[(ii)] For all~$(b,d)\in D$ and~$(b',d')\in D'$, we have $\lim_{t\rightarrow 1^-}(b^{\gamma_0}(t),d^{\gamma_0}(t))=(b',d')$ if and only if $\lim_{t\rightarrow 1^-}(b^{\gamma_1}(t),d^{\gamma_1}(t))=(b',d')$.
\item[(iii)] For all~$(b,d)\in D$ and~$(b',d')\in D'$, we have $\lim_{t\rightarrow 1^-}(d^{\gamma_0}(t)-b^{\gamma_0}(t))=0$ if and only if $\lim_{t\rightarrow 1^-}(d^{\gamma_1}(t)-b^{\gamma_1}(t))=0$.
\end{itemize}
\end{lemma}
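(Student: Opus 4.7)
The plan is to prove the cycle of implications $(i) \Rightarrow (ii) \Rightarrow (iii) \Rightarrow (i)$, closely paralleling the strategy of Lemma~\ref{lemma_homotopic_zigzag_index}.

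For $(i) \Rightarrow (ii)$, I would let $(\gamma_s)_{s \in [0,1]}$ be a homotopy of elementary entrance paths from $\gamma_0$ to $\gamma_1$, and for each pair $(b,d) \in D$ and $(b',d') \in D'$ set
\[A_{(b,d)}^{(b',d')} := \big\{s \in [0,1] \, \mid \, \lim_{t \to 1^-} (b^{\gamma_s}(t), d^{\gamma_s}(t)) = (b', d')\big\}.\]
Continuity of the homotopy on the compact square $[0,1]^2$ together with $\gamma_s(1) = D'$ yields uniform convergence $\gamma_s(t) \to D'$ as $t \to 1^-$. Proposition~\ref{proposition_bottleneck_ball} then forces, for $t$ close enough to $1$ and all $s$, each bar of $\gamma_s(t)$ to be either $\epsilon$-close to a unique bar of $D'$ or $\epsilon$-small. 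Each $A_{(b,d)}^{(b',d')}$ is therefore closed by continuity of the bar tracks, and the sum $s \mapsto \sum_{(b,d),(b',d')} \mathbb{1}_{A_{(b,d)}^{(b',d')}}(s)$ is identically equal to $|D'|$, which exactly as in the proof of Lemma~\ref{lemma_homotopic_zigzag_index} forces each $A_{(b,d)}^{(b',d')}$ to also be open, hence either empty or all of $[0,1]$, yielding~$(ii)$.

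The implication $(ii) \Rightarrow (iii)$ is immediate: for a bar $(b,d) \in D$, the length $d^\gamma(t) - b^\gamma(t)$ converges to $0$ as $t \to 1^-$ if and only if $(b^\gamma(t), d^\gamma(t))$ does not converge to any non-degenerate bar of $D'$, so preserving the assignment ``which bar of $D$ limits to which bar of $D'$'' automatically preserves the collapse/non-collapse dichotomy.

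The main work is in $(iii) \Rightarrow (i)$. Via the coordinate chart $\ChartBarc: \StandSimplex^{\dim D} \to \overbar{\StratumD_D}$ extended to the closed simplex (Remark~\ref{remark_extension_coordchart_barcodes}), an elementary entrance path $\gamma$ from $D$ to $D'$ lifts to a continuous path inside $\mathring{\StandSimplex}^{\dim D}$ for $t<1$, whose limit as $t \to 1^-$ lies on a boundary face $F$ of $\StandSimplex^{\dim D}$ with $D' \in \ChartBarc(F)$. A collapsing bar $(x_i, x_j) \in D$ forces the consecutive coordinates $x_i^\gamma(t), \ldots, x_j^\gamma(t)$ to coalesce in the limit (intermediate endpoints being sandwiched by monotonicity), so the face $F$ is cut out exactly by the collapsing pattern; hence by $(iii)$, the paths $\gamma_0$ and $\gamma_1$ land on the same face $F$. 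I would then construct the homotopy by straight-line interpolation in the lifted coordinates,
\[\gamma_s(t) := \ChartBarc\bigl((1-s)\, \ChartBarc^{-1}(\gamma_0(t)) + s\, \ChartBarc^{-1}(\gamma_1(t))\bigr) \quad \text{for } t < 1,\]
extended by $\gamma_s(1) := D'$, and verify that this family is continuous, remains in $\StratumD_D$ for $t < 1$ by convexity of $\mathring{\StandSimplex}^{\dim D}$, and has limit $D'$ at $t = 1$ for every $s$. The main obstacle I anticipate is precisely this last verification, namely that $\ChartBarc^{-1}(D') \cap F$ is convex, so that any linear combination of admissible limit points remains admissible. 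This reduces to a combinatorial analysis of the partition of $\{1, \ldots, \dim D\}$ into collapsing blocks induced by $(iii)$: non-collapsing bars pin down most block limit values uniquely to the corresponding endpoints of $D'$, while blocks whose endpoints are only involved in collapsing bars retain a free parameter valued in an open sub-interval bounded by neighboring fixed values. The admissible set of limit values in $F$ is therefore a product of points and intervals, hence convex, closing the argument.
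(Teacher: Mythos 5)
Your proposal is correct and takes essentially the same route as the paper's proof: a continuity/open-closed argument on the homotopy parameter for $(i)\Rightarrow(ii)$, and the straight-line interpolation of coordinates $x_i^{\gamma_s}(t):=(1-s)\,x_i^{\gamma_0}(t)+s\,x_i^{\gamma_1}(t)$ for the converse. The only cosmetic difference is that the paper verifies the limit at $t=1$ by a direct per-bar calculation (using that both partitions $D_{(a)}$ and $D_{(b)}$ agree), whereas you phrase the same verification as convexity of the admissible limit set $\ChartBarc^{-1}(D')\cap F$ inside the appropriate boundary face of $\StandSimplex^{\dim D}$; the two verifications are equivalent.
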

\begin{proof}
$[(i)\Rightarrow (ii) \text{ and } (iii)]$: Let~$\gamma_h$ be a homotopy between~$\gamma_0$ and~$\gamma_1$ through entrance paths, and let~$h\in [0,1]$. Using Proposition~\ref{proposition_bottleneck_ball}, we can partition the intervals in~$D$ into sets~$D_{(a)}^{\gamma_h}$ and~$D_{(b)}^{\gamma_h}$ as follows:
\begin{itemize}
\item[(a)] For each interval $(b',d')\in D'$, there is a unique $(b,d)\in D$ for which $\lim_{t\rightarrow 1^-}(b^{\gamma_h}(t),d^{\gamma_h}(t))=(b',d')$;
\item[(b)] For all other intervals $(b,d)\in D$, we have $\lim_{t\rightarrow 1^-}(b^{\gamma_h}(t)-d^{\gamma_h}(t))=0$.
\end{itemize}
By continuity, the classification remains constant along the homotopy~$h\mapsto \gamma_h$, i.e.~$D_{(a)}^{\gamma_h}$ and~$D_{(b)}^{\gamma_h}$ are the same for all~$0\leqslant h\leqslant 1$.

$[(ii)\Rightarrow (iii)]$: By assumption~$D_{(a)}^{\gamma_0}=D_{(a)}^{\gamma_1}$, hence~$D_{(b)}^{\gamma_0}=D_{(b)}^{\gamma_1}$.

$[(iii)\Rightarrow (ii)]$: By assumption~$D_{(b)}^{\gamma_0}=D_{(b)}^{\gamma_1}$, hence~$D_{(a)}^{\gamma_0}=D_{(a)}^{\gamma_1}=:D_{\mathrm{(a)}}$. Then,~$\gamma_0$ and~$\gamma_1$ induce bijections from~$D_{\mathrm{(a)}}$ to the set of intervals in~$D'$. Since~$\gamma_0$ and~$\gamma_1$ are entrance paths, these bijections are monotonic with respect to the endpoint values. So they are in fact the same bijections. 

$[(ii) \text{ and } (iii) \Rightarrow (i)]$: We define a homotopy~$h\mapsto \gamma_h$ between the restrictions of~$\gamma_0$ and~$\gamma_1$ to~$[0,1)$ by interpolating the coordinates: 
\[\forall 0\leqslant t< 1, \, \forall 1\leqslant i \leqslant \dim D, \,  x_i^{\gamma_h}(t):=hx_i^{\gamma_1}(t)+(1-h)x_i^{\gamma_0}(t). \]
For each interval~$(b,d)\in D$, we then have 
\[\forall 0\leqslant t< 1, \,  (b^{\gamma_h}(t),d^{\gamma_h}(t))=(hb^{\gamma_1}(t)+(1-h)b^{\gamma_0}(t), hd^{\gamma_1}(t)+(1-h)d^{\gamma_0}(t)). \]
Since $D_{(b)}^{\gamma_0}=D_{(b)}^{\gamma_1}$ and~$D_{(a)}^{\gamma_0}=D_{(a)}^{\gamma_1}$, we have $\lim_{t\rightarrow 1^{-}} (b^{\gamma_h}(t),d^{\gamma_h}(t))= (b',d')$ (resp. $\lim_{t\rightarrow 1^{-}} d^{\gamma_h}(t)-b^{\gamma_h}(t)=0$) if and only if  $\lim_{t\rightarrow 1^{-}}(b^{\gamma_0}(t),d^{\gamma_0}(t))= (b',d')$ (resp. $\lim_{t\rightarrow 1^{-}} d^{\gamma_0}(t)-b^{\gamma_0}(t)=0$). Therefore,
\[\forall h \in [0,1], \lim_{t\rightarrow 1^-} \gamma_h(t)=D',\]
hence the homotopy~$h\mapsto \gamma_h$ extends to a homotopy between~$\gamma_0$ and~$\gamma_1$ on the whole unit interval. 
\end{proof}
\begin{proof}[Proof of Proposition~\ref{proposition_entrance_path_category_equal_catbarc}]
The functoriality of~$[\phi] \mapsto [\gamma_\phi]$ amounts to showing that if~$\phi \in \CategoryBarc(D,D')$ and~$\psi \in \CategoryBarc(D',D'')$ are two morphisms, then the entrance paths~$\gamma_{\psi\circ\phi}$ and~$\gamma_{\psi}. \gamma_\phi$ are homotopic. For~$t\in [0,1]$, let~$\phi_t$ denote the interpolated map~$(1-t)\mathrm{Id}+t\phi$, so that~$\gamma_\phi(t)=\phi_t.D$,~$\gamma_\psi(t)=\psi_t.D'$ and~$\gamma_{\psi\circ \phi}(t)=(\psi \circ \phi)_t.D$. Besides, the concatenated path $\gamma_{\psi}. \gamma_\phi(t)$ equals~$\phi_{2t}.D$ for~$t\leqslant \frac{1}{2}$ and~$\psi_{2t-1}.D'=\psi_{2t-1}.(\phi.D)$ for~$t\geqslant \frac{1}{2}$. A homotopy between~$\gamma_{\psi\circ\phi}$ and~$\gamma_{\psi}.\gamma_{\phi}$ can then be defined as:
\[\mathrm{H}(h,t):= [h\phi_{2t}+ (1-h)(\psi\circ \phi)_t].D \text{ for } t\leqslant \frac{1}{2},\] and
\[\mathrm{H}(h,t):= [h\psi_{2t-1}\circ \phi+ (1-h)(\psi\circ \phi)_t].D \text{ for } t\geqslant \frac{1}{2}.\]
Hence, we obtain a functor from~$h\CategoryBarc$ to~$\Ent(\CatBarc)$, which is the identity on objects. Given barcodes~$D,D'$, we show that this functor gives a bijection $h\CategoryBarc(D,D')\overset\cong \longrightarrow\Ent(\CatBarc)(D,D')$.

Let~$\gamma$ be an elementary entrance path from~$D$ to~$D'$. We construct a morphism~$\phi$ from~$D$ to~$D'$ using the classification of the maps~$(b^{\gamma}(t),d^{\gamma}(t))$:
\begin{itemize}
\item[(a)] For each interval $(b',d')\in D'$, there is a unique $(b,d)\in D$ for which $\lim_{t\rightarrow 1^-}(b^{\gamma}(t),d^{\gamma}(t))=(b',d')$. We then set~$\phi(b):=b'$ and~$\phi(d):=d'$;
\item[(b)] For all other intervals $(b,d)\in D$, we have $\lim_{t\rightarrow 1^-}(b^{\gamma}(t)-d^{\gamma}(t))=0$. We then set~$\phi(b)=\phi(d)$ to be an arbitrary value such that~$\phi$ remains non-decreasing.
\end{itemize}
We extend~$\phi$ to a non-decreasing map of the unit interval arbitrarily. Since~$\gamma(1)=D'$, we have~$\phi.D=D'$ by construction. Besides, the interpolated path $\gamma_{\phi}(t)=[t\phi +(1-t)\mathrm{Id}].D$ is an elementary entrance path beween~$D$ and~$D'$, which satisfies the (i) and (ii) of Lemma~\ref{lemma_homotopic_entrance_paths} with respect to~$\gamma$, hence is homotopic to~$\gamma$. More generally, an arbitrary entrance path~$\gamma$ from~$D$ to~$D'$ is homotopic to a finite concatenation of elementary paths. In turn,~$\gamma$ is homotopic to an interpolated morphism~$\gamma_{\phi}$ by applying the previous argument to each elementary path. Therefore, the map~$[\phi]\in h\CategoryBarc(D,D')\mapsto [\gamma_\phi]\in \Ent(\CatBarc)$ is surjective.

Besides, comparing the (ii) of Lemma~\ref{lemma_homotopic_zigzag_index} with the (ii) of Lemma~\ref{lemma_homotopic_entrance_paths}, we see that if two morphisms between~$D$ and~$D'$ induce interpolated paths that are homotopic, then they must be homotopic. Consequently, the map~$[\phi]\in h\CategoryBarc(D,D') \mapsto [\gamma_{\phi}]\in \Ent(\CatBarc)$ is injective.
\end{proof}
\begin{remark}
\label{remark_classification_stratified_coverings}
It is well-known that coverings over a topological space~$\TopSpace$ satisfying mild properties are classified by the fundamental groupoïd of~$\TopSpace$. When~$\TopSpace$ is stratified, it is natural to consider stratified coverings, i.e. maps restricting to coverings over each individual stratum. If~$\TopSpace$ is homotopically stratified with locally simply connected and locally connected strata, the stratified coverings which are either local homeomorphisms or {\em branched covers} are classified by the entrance path category of~$\TopSpace$~\cite{woolf2008fundamental}. That is, functors from~$\Ent(\TopSpace)$ to $\mathbf{Set}$ functorially give rise to such stratified coverings, in fact also to constructible cosheaves, and conversely.\footnote{See~\cite{curry2016classification} for similar classifications of functors over~$\Ent(\TopSpace)$ when~$\TopSpace$ is conically stratified, and~\cite{treumann2009exit} for some $2$-categorical equivalences.} Although in the case of barcodes the inverse image~$\persmap^{-1}$ is valued in~$h\mathbf{Top}$, we have a natural set valued functor:
$$
\xymatrix{
\Ent(\CatBarc) \ar[rr]^{\persmap^{-1}} &&
h\mathbf{Top} \ar[rr]^{\pi_{0}} && \mathbf{Set}. }
$$
%It might be interesting to further investigate the information about the complex~$\SComplex$ encoded by this functor and its associated branched cover over~$\CatBarc$.
%
\end{remark}
We next prove  an analogue of Theorem~\ref{theorem_barcode_category} for entrance paths.
%that is (each connected component of) the space of entrance paths between two fixed barcodes is contractible. 
%
\begin{proposition}
\label{proposition_entrance_path_space_contractible}
For any two barcodes~$D,D'$ in~$\CatBarc$, the space of morphisms~$\Path_\leq(\CatBarc)(D,D')$ has finitely many path connected components each of  which is contractible. In other words,
the category~$\Path_\leq(\CatBarc)$ is {\em homotopy discrete}, i.e. $\Path_\leq(\CatBarc)(D, D') \simeq h\Path_\leq(\CatBarc) (D, D')$.
\end{proposition}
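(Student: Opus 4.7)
The plan is to mirror the proof of Theorem~\ref{theorem_barcode_category}, transferring structure through the bijection~$h\CategoryBarc \cong \Ent(\CatBarc)$ of Proposition~\ref{proposition_entrance_path_category_equal_catbarc}. Finiteness of the set of path components of~$\Path_\leq(\CatBarc)(D,D')$ is immediate: by definition this set is~$\Ent(\CatBarc)(D,D')$, which is in bijection with~$h\CategoryBarc(D,D') = \pi_0(\CategoryBarc(D,D'))$, a finite set by Theorem~\ref{theorem_barcode_category}.

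For contractibility of a path component~$\Omega \subseteq \Path_\leq(\CatBarc)(D,D')$, I would fix via the surjectivity in Proposition~\ref{proposition_entrance_path_category_equal_catbarc} a morphism~$\phi \in \CategoryBarc(D,D')$ such that~$\gamma_\phi \in \Omega$, and construct an explicit deformation retraction~$H\colon [0,1] \times \Omega \to \Omega$ onto~$\{\gamma_\phi\}$. On the subspace of elementary entrance paths inside~$\Omega$ this is a direct parametrisation of the implication~$(ii),(iii)\Rightarrow(i)$ of Lemma~\ref{lemma_homotopic_entrance_paths}: using the chart~$\ChartBarc\colon \mathring{\StandSimplex}^{\dim D}\overset{\cong}{\longrightarrow}\StratumD_D$ to write an elementary path~$\gamma$ by its coordinates~$(x^\gamma_i(t))_{i=1}^{\dim D}$ on~$[0,1)$, set
\[
H(s,\gamma)(t) := \ChartBarc\bigl((1-s)\,x^\gamma_i(t) + s\, x^{\gamma_\phi}_i(t)\bigr)_{i=1}^{\dim D}
\]
for~$t<1$ and~$H(s,\gamma)(1):=D'$. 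Because~$\gamma$ and~$\gamma_\phi$ lie in the same path component, they share the same partition of the intervals of~$D$ into those converging to intervals of~$D'$ and those shrinking to length zero; convex combinations of the endpoint coordinates preserve this classification, so the limit at~$t=1$ remains~$D'$ and~$H(s,\gamma)\in \Omega$.

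The main obstacle is extending this construction to the non-elementary paths that may also populate~$\Omega$: their restrictions to~$[0,1)$ need not be contained in~$\StratumD_D$, so the coordinate expression above is not directly available. My strategy would be to preface the straight-line interpolation by a continuous deformation of~$\Omega$ onto its subspace of elementary paths. Concretely, for each stratum dimension~$\dim D' < k \leqslant \dim D$ the set~$\{t\in[0,1] : \gamma(t)\text{ belongs to a stratum of dimension }\geqslant k\}$ is an initial sub-interval of~$[0,1]$ by the entrance path condition, and by continuously pushing the supremum of each such sub-interval towards~$t=1$ via a reparametrisation of the time axis one compresses~$\gamma$ into an elementary path with the same endpoints. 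Making this reparametrisation continuous in~$\gamma$ with respect to the compact-open topology is the delicate step, since the threshold times are in general only semi-continuous in~$\gamma$; I would address it either by a smoothing/convolution of the threshold data, or by lifting the deformation through the evaluation fibration of the homotopy link furnished by Proposition~\ref{proposition_barcodes_homotopically_stratified}. Composing this flattening with the straight-line retraction above then yields the desired deformation of~$\Omega$ onto~$\{\gamma_\phi\}$.
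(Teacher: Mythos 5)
Your finiteness argument and your contractibility argument on elementary paths both match the paper's. The straight-line interpolation in coordinates is exactly the deformation retraction used there (it is the direct parametrisation of Lemma~\ref{lemma_homotopic_entrance_paths}~$(ii),(iii)\Rightarrow(i)$, as you correctly observe), and the finiteness is indeed immediate from $h\CategoryBarc(D,D')$ being finite by Theorem~\ref{theorem_barcode_category} and Proposition~\ref{proposition_entrance_path_category_equal_catbarc}.

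The gap is precisely where you flag it: you never actually reduce from arbitrary entrance paths in $\Omega$ to elementary ones. Your two proposed fixes are not on equal footing. The smoothing/convolution idea is unlikely to succeed as stated: the threshold times you describe jump discontinuously as a path that merely \emph{touches} a lower stratum is perturbed to one that does not, so the data you would be convolving is discontinuous in $\gamma$, and a convolution in $t$ alone does not repair dependence on $\gamma$. Your second suggestion — exploiting the $\Holink$ evaluation fibration from Proposition~\ref{proposition_barcodes_homotopically_stratified} — points in the right direction but is left as a sketch, and it is nontrivial to turn the fibration property into the needed deformation retraction of $\Path_\leq(\CatBarc)(D,D')$ onto the space of elementary entrance paths. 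The paper closes this gap by invoking Miller's theorem~\cite[Theorem~4.9]{miller2009popaths}: for homotopically stratified metric spaces, the space of entrance paths between two points is homotopy equivalent to the homotopy link. That external result is exactly the missing reduction, and after it is in place the argument is precisely yours. Without either a citation to (or a reproof of) that reduction, the proof is incomplete.
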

\begin{proof}
Let~$D$ and~$D'$ be two barcodes, and let~$\Holink(D,D')$ be the space of elementary entrance paths from~$D$ to~$D'$. For homotopically stratified metric spaces, the space of entrance paths and that of elementary entrance paths are homotopy equivalent~\cite[Theorem~4.9]{miller2009popaths}, from which we deduce that:
\[\Path_\leq (\CatBarc) (D, D') \simeq \Holink(D,D').\]
%Since~$\CatBarc$ is a metric space and is homotopically stratified by Proposition~\ref{proposition_barcodes_homotopically_stratified}, we can apply Proposition~4.9 from~\cite{miller2009popaths} to obtain a homotopy equivalence:
%
%\[\Path_< (D, D') \simeq \Holink(D,D').\]
%
The proof of the statement then follows from Lemma~\ref{lemma_homotopic_entrance_paths}. In more detail, let~$\Omega$ be a path connected component in~$\Holink(D,D')$, and let~$\gamma_0\in \Omega$. Recall that we can partition the intervals in~$D$ into sets~$D_{(a)}^{\gamma_0}$ and~$D_{(b)}^{\gamma_0}$ as follows:
\begin{itemize}
\item[(a)] For each interval $(b',d')\in D'$, there is a unique $(b,d)\in D$ for which $\lim_{t\rightarrow 1^-}(b^{\gamma_0}(t),d^{\gamma_0}(t))=(b',d')$;
\item[(b)] For all other intervals $(b,d)\in D$, we have $\lim_{t\rightarrow 1^-}(b^{\gamma_0}(t)-d^{\gamma_0}(t))=0$.
\end{itemize}
From Lemma~\ref{lemma_homotopic_entrance_paths}, any other path~$\gamma\in \Omega$ satisfies~$D_{(a)}^{\gamma}=D_{(a)}^{\gamma_0}$ and~$D_{(b)}^{\gamma}=D_{(b)}^{\gamma_0}$. Define~$r:\Omega \times [0,1] \times [0,1) \rightarrow \Barc_K$ by:
\[r: (\gamma,h,t)\in \Omega \times [0,1] \times [0,1) \longmapsto \big\{((1-h)b^{\gamma}(t)+ h b^{\gamma_0}(t)  ,(1-h)d^{\gamma}(t)+ h d^{\gamma_0}(t))\big\}_{(b,d)\in D} \in \StratumD_D\subseteq \Barc_K.\]
We can continuously extend~$r$ at time~$t=1$ by~$r(\gamma,h,1):= D'$. We then get a deformation retraction
\[R:(\gamma,h)\in \Omega \times [0,1] \longmapsto r(\gamma,h,.)\in \Omega \] 
of~$\Omega$ onto~$\{\gamma_0\}$. 
\end{proof}
Combining Proposition \ref{proposition_entrance_path_category_equal_catbarc} and \ref{proposition_entrance_path_space_contractible} we can summarise our results in this section with the following.
\begin{corollary} For any two barcodes $D, D' \in \CatBarc$ we have
\[\Path_\leq (\CatBarc)(D, D') \simeq h\Path_\leq(\CatBarc)(D , D')= \Ent(\CatBarc)(D,D')\cong h\CategoryBarc(D,D') \simeq \CategoryBarc(D,D'),\]
and hence the natural weak equivalences of categories
\[\Path_\leq (\CatBarc) \overset \simeq \longrightarrow h\Path_\leq (\CatBarc) = \Ent(\CatBarc) \overset \cong \longleftarrow h\CategoryBarc \overset \simeq \longleftarrow \CategoryBarc.\]
\end{corollary}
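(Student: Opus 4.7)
The proof will simply chain together the results already established in the section. The plan is to verify each link in the chain of (weak) equivalences separately, observing that each is essentially a restatement of an earlier result applied to fixed objects $D, D' \in \CatBarc$.

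First I would handle the two homotopy-discreteness equivalences. The equivalence $\CategoryBarc(D,D') \simeq h\CategoryBarc(D,D')$ is immediate from Theorem~\ref{theorem_barcode_category}, which says the morphism space $\IncMaps(D,D')$ has finitely many contractible path components, so the quotient map onto $\pi_0$ is a homotopy equivalence of spaces (viewing $h\CategoryBarc(D,D')$ as a discrete space). Symmetrically, the equivalence $\Path_\leq(\CatBarc)(D,D') \simeq h\Path_\leq(\CatBarc)(D,D')$ is Proposition~\ref{proposition_entrance_path_space_contractible}, which gives the analogous contractibility of components for spaces of entrance paths. The equality $h\Path_\leq(\CatBarc)(D,D') = \Ent(\CatBarc)(D,D')$ is the very definition of the entrance path category from Definition~\ref{definition_entrance_path_category}.

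Next I would invoke Proposition~\ref{proposition_entrance_path_category_equal_catbarc}, which constructs an isomorphism of categories $h\CategoryBarc \cong \Ent(\CatBarc)$ via $[\phi] \mapsto [\gamma_\phi]$; restricting to morphisms out of $D$ into $D'$ gives the bijection $h\CategoryBarc(D,D') \cong \Ent(\CatBarc)(D,D')$, which is automatically a homeomorphism because both sides carry the discrete topology. Concatenating these four links yields the chain of equivalences in the statement.

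Finally, for the second displayed line, I would observe that the four natural maps
\[\Path_\leq(\CatBarc) \longrightarrow h\Path_\leq(\CatBarc) = \Ent(\CatBarc) \longleftarrow h\CategoryBarc \longleftarrow \CategoryBarc\]
are defined on the whole categories (not just on the morphism spaces between $D$ and $D'$): the first is the canonical projection to path components, the middle equality is by definition, the third is the inverse of the isomorphism of Proposition~\ref{proposition_entrance_path_category_equal_catbarc}, and the fourth is again the projection to path components. Each is the identity on objects, and hence being a (weak) equivalence of $\mathbf{Top}$-enriched categories amounts precisely to the object-wise equivalences on morphism spaces established in the first displayed line.

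I do not anticipate any real obstacle: the corollary is a bookkeeping statement assembling Theorem~\ref{theorem_barcode_category}, Proposition~\ref{proposition_entrance_path_category_equal_catbarc}, and Proposition~\ref{proposition_entrance_path_space_contractible}. The only minor point to be careful about is verifying that the isomorphism $h\CategoryBarc \cong \Ent(\CatBarc)$ is genuinely an equivalence of $\mathbf{Top}$-enriched categories (rather than just of ordinary categories), but since both sides have discrete morphism spaces between any two objects—by Theorem~\ref{theorem_barcode_category} and Proposition~\ref{proposition_entrance_path_space_contractible} respectively—this is automatic.
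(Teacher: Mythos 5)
Your proof is correct and follows exactly the route the paper intends: the corollary is stated as a summary combining Theorem~\ref{theorem_barcode_category}, Proposition~\ref{proposition_entrance_path_category_equal_catbarc}, and Proposition~\ref{proposition_entrance_path_space_contractible}, and your chain of reductions matches that decomposition precisely. The remark that the category-level comparison reduces to the morphism-space-level equivalences because all functors are the identity on objects is the right observation and completes the argument.
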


%\newpage 
%
\section{Variations of the fiber problem}
We adapt our analysis to two further  situations of interest, namely when we remove the constraint that filters and barcodes take value in the unit interval, and when we restrict~$\persmap$ to the subspace of filters determined by their values on vertices. Finally, we point out that the action of the symmetries of~$\SComplex$ on the filters restricts to the fibers.
\subsection{The case of unbounded filters and barcodes}
\label{section_unbounded_situation}
In the previous sections, the  values of filters and the interval  endpoints of barcodes  were constrained to lie in the unit interval. Here we briefly outline how our analysis can be adapted when we consider  the unbounded case and replace  the interval~$\I $ by the real line~$\R$. We denote by~$\Filt (\R)_\SComplex $ the filter functions with unrestricted real values and by~$\Barc (\R) $ the space of finite barcodes with unrestricted endpoints. As before, persistent homology defines a map
\[
\persmap: \Filt (\R)_\SComplex \longrightarrow
\Barc (\R).
\]
Let~$\IncHomSpaceR$ be the group of continuous automorphisms of the ordered real line that are the identity outside a compact set. Similarly let~$\NonDecMapsR$ be the monoid of continuous order preserving maps of the real line that are the identity outside a compact set. Both spaces then act on the extended spaces of filters and barcodes. The proof of Lemma~\ref{lemma_commutes_PH_homeo} generalises to show that the persistence map above is equivariant with respect to these extended actions. The actions are continuous as in Proposition~\ref{proposition_continuous_barcode_action} when we equip~$\IncHomSpaceR$ and~$\NonDecMapsR$ with the~$L^\infty$ topology. Using the action of~$\IncHomSpaceR$ one can construct stratifications of filter and barcode spaces such that the $\IncHomSpaceR$-orbits are the strata and the analogues of Propositions~\ref{prop_orbits_stratification_filter} and~\ref{prop_orbits_stratification_barcodes} hold.
%The group~$\IncHomSpaceR$ of continuous automorphisms of the ordered real line and the monoid~$\NonDecMapsR$ of continuous order preserving maps of the real line 

Indeed, most of the results and their proofs can easily be adapted with the caveat that fibers no longer have to be compact.
Thus,  item~{\bf (a)} of Theorem~\ref{theorem_fiber_bundle_polyhedral} needs to  be reinterpreted: The polyhedron~%$\PolytopeClosed$ 
in the fiber are not necessarily products of closed standard simplices but instead we have
\[\PolytopeClosed\cong \StandSimplex_1 \times \cdots \times \StandSimplex_{\dim \StratumD -1}\times \StandSimplex_{\dim \StratumD}', \]
where the last term  may be  a  simplex missing its last face, i.e.~$\StandSimplex_{\dim \StratumD}'$ may be the closed $i$-simplex~$\StandSimplex^i$ or of the form:
\[\StandSimplex^i_{\infty}:=\{0\leqslant x_1\leqslant \cdots \leqslant  x_i <\infty  \}.\]
 Modulo this subtlety, Theorems~\ref{theorem_fiber_bundle_polyhedral} and Proposition~\ref{proposition_monodromy} hold also in the unbounded case we consider here, and~$\persmap$ is again a stratified fiber bundle whose fibers are (possibly unbounded) polyhedra. The proofs of these results in the unbounded situation do not present additional difficulties.

Next we provide a necessary and sufficient criterion for the simplicial complex~$\SComplex$ that ensures that all the fibers of~$\persmap$ are bounded. 
 %This is a useful property since then, the fiber can be canonically given the structure of a finite simplicial complex, given that polytopal complexes admit finite triangulations. 
\begin{definition}
A subset~$L\subseteq \SComplex$ is {\em $\mathbb k$-removable}, or simply {\em removable}, if~$\SComplex\setminus L$ is a subcomplex of~$\SComplex$ and the inclusion~$\SComplex\setminus L\hookrightarrow \SComplex$ induces an isomorphism on (standard) homology with $\mathbb k$-coefficients.~$\SComplex$ is said to be {\em $\mathbb k$-essential}, or simply {\em essential}, if it has no removable subsets.
\end{definition}

For instance, any pair~$(\simplex,\simplex')$ where~$\simplex'$ has~$\simplex$ as its only co-face provides an example of a removable subset for any~$\mathbb k$. This is an elementary collapse familiar from simple homotopy theory. More elaborate examples include the wedge product~$\SComplex \vee A$ of two simplicial complexes~$\SComplex $ and~$A$, where~$A$ is $\mathbb k$-acyclic, i.e.~$A$ has trivial reduced (ordinary) homology with~$\mathbb k$ coefficients. Then~$L= A \setminus \{*\}$ is removable. A rich source of such~$A$ are the classifying spaces of perfect groups, or the classifying spaces of finite groups when~$\mathbb k$ is of characteristic zero.

\begin{proposition}
\label{proposition_compact_fiber}
The fibers of the persistence map~$\persmap$ are all compact if and only if the complex~$\SComplex$ is essential.
\end{proposition}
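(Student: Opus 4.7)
The plan is to argue both implications via contrapositives, using the polyhedral description of the fiber in the unbounded setting outlined just above the statement.

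First, suppose $\SComplex$ is not essential and pick a non-empty removable subset $L$. Choose any injective filter $g: \SComplex \setminus L \to \R$, obtained for instance by assigning distinct increasing values along a linear extension of the face poset of $\SComplex \setminus L$. For each $t > \max g$ define $f_t: \SComplex \to \R$ by $f_t|_{\SComplex \setminus L} := g$ and $f_t(\sigma) := t$ for $\sigma \in L$; removability forces $\SComplex \setminus L$ to be a subcomplex, which is equivalent to $L$ being an upper set of the face poset, so $f_t$ is a valid filter. The sublevel filtration $\SComplex(f_t)$ agrees with $\SComplex(g)$ on $(-\infty, t)$ and jumps to $\SComplex$ at $s = t$; this jump is the inclusion $\SComplex \setminus L \hookrightarrow \SComplex$, which induces an isomorphism on homology by removability. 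Consequently no bar is born or dies at $t$, so $\persmap(f_t)$ is independent of $t$, and the ray $\{f_t\}_{t > \max g}$ lies inside a single fiber, witnessing non-compactness.

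Conversely, suppose some fiber $\persmap^{-1}(D)$ is non-compact. The unbounded adaptation of Theorem~\ref{theorem_fiber_bundle_polyhedral} writes the fiber as a finite polyhedral complex whose polyhedra $\PolytopeClosed$ factor as products of standard simplices with only the top factor $\StandSimplex_{\dim \StratumD}'$ possibly unbounded. Non-compactness therefore produces a filter $f \in \PolytopeOpen$ for which the set $L := \{\sigma \in \SComplex : f(\sigma) > x_{\dim \StratumD}\}$ is non-empty, where $x_{\dim \StratumD}$ denotes the largest finite endpoint of $D$. Monotonicity of $f$ makes $L$ an upper set, so $\SComplex \setminus L$ is a subcomplex. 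Since $D$ has no endpoints in $(x_{\dim \StratumD}, \infty)$, the structure maps of the persistence module $H_*(\SComplex(f))$ over that range are all isomorphisms; evaluating at $y$ just above $x_{\dim \StratumD}$ and at $y$ large enough identifies $\SComplex(f)(y)$ with $\SComplex \setminus L$ and with $\SComplex$ respectively, so the inclusion $\SComplex \setminus L \hookrightarrow \SComplex$ induces an isomorphism on homology. Thus $L$ is non-empty and removable, so $\SComplex$ is not essential.

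The main subtlety, already encoded in the adaptation of Theorem~\ref{theorem_fiber_bundle_polyhedral}, is the asymmetry between the top and bottom of the filter. Values below the smallest barcode endpoint cannot contribute free parameters: any $y < x_1$ satisfies $H_*(\SComplex(f)(y)) = 0$ since no bar of $D$ contains $y$, which forces $\SComplex(f)(y) = \emptyset$ and prevents any simplex from having a value strictly less than $x_1$. It is only above $x_{\dim \StratumD}$ that a collection of extra simplices can move without disturbing the barcode, and the argument above shows that this occurs precisely when those simplices constitute a removable subset.
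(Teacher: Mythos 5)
Your proof is correct and follows the same approach as the paper's: the forward direction inserts a removable subset at a free parameter $t$ and uses removability to keep the barcode constant along a ray, while the reverse direction takes $L$ to be the set of simplices whose $f$-value exceeds the largest finite endpoint of $D$ and argues it is removable via the isomorphism structure maps above that endpoint. You spell out a few steps the paper leaves implicit — in particular why $\persmap(f_t)$ is independent of $t$, and why non-compactness forces a filter to take a value above $\max(D)$, where you invoke the polyhedral description from the unbounded adaptation of Theorem~\ref{theorem_fiber_bundle_polyhedral} whereas the paper appeals simply to the fiber being closed and bounded below — but the underlying argument is the same.
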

\begin{proof}

If~$\SComplex$ is not essential, it has a removable subset~$L\subseteq \SComplex$. Given a partial filter $f:\SComplex\setminus L \rightarrow \R$ and a real value~$x$ with $x\geq \max _{\sigma \in \SComplex \setminus L} f(\sigma)$,~$f$ can be extended to a filter~$ f_x$ on all of~$\SComplex$ by assigning the common value~$x$ to all the simplices in~$L$.  Thus the fiber of~$D=\persmap ( f_x)$ contains the open half line
$\{ f_x \, | \, x \in [ \max_{\sigma \in \SComplex \setminus L}  f(\sigma), \infty ) \}$ and is hence not compact.

Conversely, if~$\persmap$ has a non-compact fiber over some barcode~$D$, it means there exists an~$f\in
\persmap^{-1}(D)$ attaining  values higher than
the largest (bounded) endpoint~$\max (D)$ of~$D$. Therefore the set~$L$ of simplices on which~$f$ takes value larger than~$\max ( D)$ is removable.
\end{proof}
Next we will exhibit a family of simplicial complexes that are essential.
%The assumption that~$\SComplex$ is essential is quite reasonnable, as we now detail briefly.
We say that~$\SComplex$ is a {\em triangulated  (oriented) manifold}  if  its geometric realisation~$|\SComplex|$ is homeomorphic to a closed (orientable) manifold. Note that this manifold will necessarily be compact since~$\SComplex$ is finite.
\begin{proposition}
\label{proposition_manifold_essential}
Let~$\SComplex$ be a triangulated manifold. If either the field of coefficients~$\field$ is of characteristic 2 or~$\SComplex$ is oriented, then~$\SComplex$ is essential. 
\end{proposition}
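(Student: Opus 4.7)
The plan is to show that any non-empty $L \subseteq \SComplex$ with $\SComplex' := \SComplex \setminus L$ a subcomplex fails to yield an isomorphism on top-dimensional homology with $\field$-coefficients, thereby ruling out removability.

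First, I would observe that $L$ must be \emph{upward-closed}: if $\simplex \in L$ and $\tau$ is a coface of $\simplex$ in $\SComplex$, then $\tau \in L$. Indeed, otherwise $\tau \in \SComplex'$ but its face $\simplex \notin \SComplex'$, contradicting that $\SComplex'$ is a subcomplex. Consequently, starting from any $\simplex \in L$ and replacing it successively by a coface until none remains available, I would produce a top-dimensional simplex $\tau \in L$. Denote by $n = \dim \SComplex$ the ambient dimension.

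Second, I would invoke the manifold hypothesis. Because $\SComplex$ triangulates a closed $n$-manifold which is either oriented or considered with $\field$ of characteristic 2, Poincaré duality gives $\mathrm{H}_n(\SComplex;\field) \cong \field^{c}$ where $c$ is the number of connected components of $\SComplex$. For each component $\SComplex_i$, the corresponding fundamental class is represented by a cycle $z_i \in \mathrm{C}_n(\SComplex;\field)$ supported (with nonzero coefficients) on every top-dimensional simplex of $\SComplex_i$; this is the standard pseudomanifold/Poincaré-duality description of the top cycle, which follows from the fact that each $(n-1)$-simplex of $\SComplex$ lies on exactly two $n$-simplices and from the connectedness of each component. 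Since $\SComplex$ has no $(n+1)$-simplices, $\mathrm{C}_{n+1}(\SComplex;\field) = 0$, so $\mathrm{Z}_n(\SComplex;\field) = \mathrm{H}_n(\SComplex;\field)$ and every top-dimensional cycle is a linear combination $\sum_{i} a_i z_i$.

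Third, I would compare top homologies across the inclusion. The subcomplex $\SComplex'$ also satisfies $\mathrm{C}_{n+1}(\SComplex';\field) = 0$, hence $\mathrm{H}_n(\SComplex';\field) = \mathrm{Z}_n(\SComplex';\field) \subseteq \mathrm{Z}_n(\SComplex;\field)$. Let $\SComplex_{i_0}$ be the component of $\SComplex$ containing the top simplex $\tau \in L$ found in the first step. Any nonzero cycle $\sum_i a_i z_i$ with $a_{i_0} \neq 0$ would have nonzero coefficient on $\tau$, contradicting $\tau \notin \SComplex'$. Hence $\mathrm{Z}_n(\SComplex';\field)$ is contained in the span of those $z_i$ with $i \neq i_0$, a proper subspace of $\mathrm{H}_n(\SComplex;\field)$, so the inclusion-induced map on $\mathrm{H}_n$ is not surjective, and $L$ cannot be removable.

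The technical crux is the claim that a top-dimensional $\field$-cycle on a connected triangulated closed manifold (oriented, or with $\mathrm{char}\,\field = 2$) has all its $n$-simplex coefficients nonzero. This is the key input that bridges the combinatorial condition on $L$ with the homological obstruction; all other steps are essentially bookkeeping. Everything else is a direct chase through the definitions of subcomplex and removability.
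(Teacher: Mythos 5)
Your proof is correct, and it takes a more direct route than the paper's. The paper deduces Proposition~\ref{proposition_manifold_essential} from Proposition~\ref{proposition_compact_fiber} (essential $\iff$ all fibers compact): it reduces to the connected case, notes that any filter attains its maximum on a top-dimensional simplex, uses the same key input that $\mathrm{H}_d(\SComplex;\field)\cong\field$ is generated by the sum of all top simplices to conclude the degree-$d$ infinite bar starts exactly at $\max f$, and hence every fiber is bounded and closed, so compact, so $\SComplex$ is essential. You instead argue directly from the definition of removability: a non-empty $L$ with $\SComplex\setminus L$ a subcomplex must be upward-closed, hence contains a top simplex $\tau$, and then the fundamental cycle argument shows the inclusion fails to be surjective on $\mathrm{H}_n$. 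The underlying combinatorial/homological content is the same (upward-closedness of $L$, plus the pseudomanifold fact that the top cycle is supported on all facets), but your version is self-contained and makes no appeal to the persistence/fiber machinery, whereas the paper's is shorter once Proposition~\ref{proposition_compact_fiber} is in place. One small note: invoking Poincar\'e duality is slightly heavier than necessary here --- the pseudomanifold connectivity argument you yourself sketch already gives $\mathrm{H}_n(\SComplex_i;\field)\cong\field$ with generator supported on all facets, which is all you use. You are also correct to treat $L$ as non-empty; although the paper's Definition does not say so explicitly, this is clearly intended (otherwise $L=\emptyset$ would be removable for every complex and no complex would be essential).
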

\begin{proof}
Without loss of generality, we may assume that~$\SComplex$ is connected and of dimension~$d$.
Then any filter~$f$ attains its maximum value on a top dimensional simplex, since all lower-dimensional simplices have co-faces. By our assumptions, we have~$\mathrm{H}_d(\SComplex)\cong \field$ and a generator of this top dimensional homology class is the sum of all top dimensional simplices of~$\SComplex$ (with appropriate signs). At the level of barcodes, this means that there is an infinite interval in homological degree~$d$ starting at~$\max_{\simplex \in \SComplex}f(\simplex)$. Therefore the fiber of the persistence map over any barcode~$D$ is bounded, and since it is closed by continuity of~$\persmap$, it is also compact and hence essential by the previous result.
\end{proof}
The converse of Proposition~\ref{proposition_manifold_essential} is false as  can be seen from the following simple counterexample.
\begin{example}
\label{example_converse_manifold_essential_wrong}
Let~$\SComplex$ be the wedge product of two triangles. So~$\SComplex$ has five vertices and six 1-simplices. 
Its first homology group is of rank 2 but any subcomplex will have at most rank 1. Thus~$\SComplex$ is essential, but~$\SComplex$ is not a manifold.
\end{example}

The point-set topology is a little delicate when working with the unbounded real line~$\R$ instead of the compact interval~$\I$. This is part of the reason why we chose to work with~$\I$ for the main part of our paper.
For example,  Proposition~\ref{prop_quotient_topology} cannot be adapted to the unbounded situation: If the fibers of~$\persmap$ are not compact then the bottleneck topology and the quotient topology induced by~$\persmap$ do not necessarily agree on the image~$\CatBarc(\R)$ as  Example~\ref{example_quotient_not_bottleneck_unbounded_situation} below shows. 

\begin{example}
\label{example_quotient_not_bottleneck_unbounded_situation}
Let us consider again the Example~\ref{example_line_complex} of the complex~$\SComplex$ representing the unit interval with vertices~$a,b$ and $1$-simplex~$\simplex$. As a set~$\CatBarc(\R)$ can be identified (as in Example~\ref{example_line_complex}) with the 3-simplex~$\StandSimplex'_3=\{(x_1,x_2,x_3), -\infty<x_1\leq x_2 \leq x_3< \infty\}$ with two missing faces, where in addition the 2-dimensional face corresponding to~$-\infty< x_1 < x_2=x_3 < \infty$ is collapsed to the line segment~$-\infty < x_1=x_2=x_3 < \infty$. Consider the set 
\[U:=\big\{ (x_1,x_2,x_3)\,  | \, x_3-x_2<e^{-x_2} \big\} \subseteq \CatBarc(\R) \]
of barcodes whose unique bounded bar~$(x_2,x_3)$ has length less than~$e^{-x_2}$. We then have
\[\persmap^{-1}(U):=\big\{ f\in \Filt_\SComplex(\R)\,  | \, f(\simplex)-\max(f(a),f(b))<e^{-\max(f(a),f(b))} \big\} \subseteq \Filt_\SComplex(\R) \subseteq \R^3, \]
which is open in~$\Filt_\SComplex(\R)$ for the usual topology induced by the $\|.\|_\infty$-metric. Therefore~$U$ is an open set in the quotient topology. However it does not contain any bottleneck ball, hence is not an open set in the bottleneck topology.
\end{example}

Furthermore, the choice of topology on~$\NonDecMapsR$ and~$\IncHomSpaceR$ matters in the unbounded situation: Replacing the~$L^{\infty}$ topology by the compact open topology results in the actions not being (sequentially) continuous as can be seen in the following example.
\begin{example} 
\label{Example_action_not_continuous}
Consider the sequence of barcodes~$D_n$ containing a single interval~$(n, n+2^{-n})$. The sequence~$D_n$ converges to the empty diagram~$D_{\emptyset}$ in the bottleneck topology. In addition, let~$\phi_n:\R\rightarrow \R$ be the map such that~$\phi_n(n)=n$,~$\phi_n(n+2^{-n})=n+1$,~$\phi_n|_{[n,n+2^{-n}]}$ and $\phi_n|_{[n+2^{-n}, n+2]}$ are linear, and outside $[n, n+2]$ ~$\phi_n$ is the identity. Then the sequence~$\phi_n$ converges to the identity map of the real line in the compact open topology. If the action were continuous in both variables, the sequence~$\phi_n.D_n$ would converge to the empty diagram~$\mathrm{Id}.D_{\emptyset}=D_{\emptyset}$. However, each of the barcodes~$\phi_n.D_n$ contains a unique interval~$(n,n+1)$, and the sequence does therefore not converge in the bottleneck topology. 
\end{example}

However, in our analysis, we have never needed to make full use of the continuity of the action of~$\NonDecMaps$. Instead, it is enough to ensure that the action is continuous w.r.t. the choice of $\phi$. Namely, fixing~$D\in \Barc$, the map $\phi\in \NonDecMaps \mapsto \phi.D\in \Barc$ is continuous. In the current unbounded situation, it can also be proven that the map $\phi\in \NonDecMapsR \mapsto \phi.D\in \Barc (\R)$ is continuous, where we consider the compact open topology on~$\NonDecMapsR$. This is precisely what is needed to carry the analysis through in a similar fashion. 

Finally, if we do not impose that maps in~$\NonDecMapsR$ and~$\IncHomSpaceR$ equal the identity outside a compact set, then the analysis breaks down in the~$L^\infty$ topology. For instance, straight line interpolations on which our results rely, would not always give continuous paths. 
%An alternative is to replace the~$L^\infty$ topology by the compact open topology on~$\NonDecMapsR$ and~$\IncHomSpaceR$. However the analogue Proposition~\ref{proposition_continuous_barcode_action} does not hold: the action of~$\NonDecMapsR$ and~$\IncHomSpaceR$ on~$\Barc(\R)$ are not sequentially continuous. 
%
\subsection{The case of lower star filters}
\label{section_lower_stars}
The lower star filtration form an interesting subspace of the space of all filters on~$\SComplex$ and  one might want to restrict one's attention to these as for example in~\cite{cyranka2018contractibility}. We summarise briefly how our analysis can be adapted and compared to this case.

Let~$\SComplex$ be a finite simplicial complex with vertex set~$\Vset$. A {\em lower star filter} on~$\SComplex$ is a filter~$f\in \Filt_\SComplex$ such that for any simplex~$\simplex\in \SComplex$:
\[f(\simplex)= \max_{v\in \simplex}f(v).\]
Being determined by their values on vertices, such filters offer many advantages in practice. Any function~$f: V \to \I$  can be extended uniquely to a lower star filter. Hence, the subspace~$\Low_\SComplex\subseteq\Filt_\SComplex$ of lower star filters is canonically isomorphic to~$\I^\Vset$. We denote its image under the persistence map~$\persmap$ by~$\Barc_\SComplex^{\Low}$.

The actions of~$\IncHomSpace$ and~$\NonDecMaps$ by post-composition restrict to~$\I^\Vset$ and, by the equivariance of~$\persmap$, also to~$\Barc_\SComplex^{\Low}$. 
As the strata are given by $\IncHomSpace$-orbits, we see that both~$\Low_\SComplex$  and~$\Barc_\SComplex^{\Low}$
are sub-stratified spaces,  %of~$\Filt_\SComplex$ 
each consisting of a subcollection of full strata from~$\Filt_\SComplex$ and~$\Barc_\SComplex$ respectively. %Similarly, by equivariance of $\persmap$ the actions on $\Barc_\SComplex$ restrict to $\Barc_\SComplex ^\Low$ making the latter into a (strongly) sub-stratified space 
Thus~$\persmap$ restricts to a strongly stratified map
%, since inside a given stratum~$\StratumF\subseteq \Filt_\SComplex$, the pre-order on simplices of~$\SComplex$ induced by all filters is the same. 
%
\[\persmap_{|\Low}:\Low_\SComplex \longrightarrow \Barc_\SComplex^\Low \]
and hence
satisfies similar properties as~$\persmap$. In particular, the fiber~$\persmap_{\Low}^{-1}(D)$ has again the structure of a polyhedral complex and the analogue of  Theorem~\ref{theorem_fiber_bundle_polyhedral} holds. 
%In the case where~$\SComplex$ is a complex obtained from a finite subdivision of the unit interval, an alternative decomposition of the fiber into polyhedra has been found in~\cite{cyranka2018contractibility}.  

The space~$\Barc_\SComplex^\Low$ also gives rise to a subcategory~$\CategoryBarc^{\Low}$ of~$\CategoryBarc$. We note that this is a full subcategory. Thus Theorem~\ref{theorem_barcode_category} and Proposition~\ref{proposition_monodromy} also hold for this subcategory. In particular,~$\CategoryBarc^\Low$ is homotopy discrete.
As before, we can associate to morphisms in~$\CategoryBarc^\Low$ monodromies between fibers, turning the inverse image into a functor
\[\persmap_{|\Low}^{-1}:\CategoryBarc^{\Low} \longrightarrow \mathbf{Top}. \]
Up to homotopy, the monodromies between fibers are again polyhedral maps. 
\begin{remark}
\label{remark_barycentric_subdivision_lower}
Vice-versa, we may also consider the space of filters on~$\SComplex$ as a subspace of the space of  lower star filters on its barycentric subdivision~$\hat \SComplex$. Recall that the vertices of~$\hat \SComplex$ are the simplices of~$\SComplex$. Thus~$f \in \Filt_\SComplex$ uniquely gives rise to~$\hat f \in \Low_{\hat \SComplex}$ via
\[\hat f( \hat \simplex ) := f(\simplex),\]
where~$\hat \simplex$ is the vertex of~$\hat \SComplex$ corresponding to the simplex~$\simplex$ in~$\SComplex$. 
This way, we get a nested sequence of  spaces:
\[\Low_\SComplex \subset \Filt_\SComplex \subset \Low_{\hat \SComplex}.\]
%
%Note that neither inclusion is an equality.
It is a straightforward exercise to show that $\persmap (f) = \persmap (\hat f)$. Thus we also have a nested sequence of barcode spaces:
\[
\Barc_\SComplex^\Low \subset \Barc_\SComplex \subset \Barc_{\hat \SComplex }^\Low.
\]
All these inclusions are also $\NonDecMaps$-equivariant and~$\persmap$ defines an equivariant map between these nested sequences. Thus, by similar arguments as before,~$\persmap$ and the inclusions are compatible with the stratifications in the strongest sense giving rise to a sequence of full, homotopy discrete subcategories
\[
\CategoryBarc^\Low \subset \CategoryBarc \subset \mathbf{Bar_{\hat \SComplex}}^\Low.
\]
It would be interesting to analyse how the fibers of~$\persmap$, or more generally the fiber functors on these three categories are related.
\end{remark}
%However, all lower star filters of~$\hat \SComplex$ that arise this way  have the vertices of~$\SComplex $ as local minima (a local minimum is obtained by~$f$ at a vertex~$v$ if $f(\simplex) \geq f(v)$ for all simplices $\simplex $ with $v \in \simplex$);
%In particular the map $\Filt_\SComplex \to 
%\Low (\hat \SComplex)$ taking~$f$ to~$\hat f$
%is an embedding of stratified spaces and the image is a union of strata.
%}
%{\bf Question:} We have~$\Low_\SComplex \subset \Filt_\SComplex \subset \Low (\hat \SComplex)$. It is natural to ask the question
%how the fibers of~$\persmap $ on these three spaces  are related.
%
\subsection{Symmetries restricted to  fibers}
\label{sec:symmetry}
In this brief section we examine how symmetries of the simplicial complex restrict to the fibers of the persistence map. For simplicity we return to filters and barcodes in the unit interval~$\I$, but the analysis can be carried out in the unbounded situation or when restricting to lower star filters in the same way.  

Let~$\SymK$ be the group of isomorphisms of the simplicial complex $\SComplex$. Then $\SymK$ can be identified as the subgroup of the group of symmetries  $\text{Sym} (\SComplex_0)$ of the vertices  $ \SComplex_0$ which consists of all those $s$ that map a subset $\simplex \in \SComplex$ to a subset $s(\simplex ) \in \SComplex$.
Pre-composition with the inverse induces a left action of $\SymK$ on the space $\Filt_\SComplex$ of filters via 
\[s.f:= f\circ {s}^{-1}.\]
Thus $f$ and $s.f$ take  the same values and furthermore, if $\StratumF\subseteq \Filt_\SComplex$ is a filter stratum then $s.\StratumF$ is another stratum of the same dimension, $\dim s.\StratumF= \dim \StratumF$, and $s$ maps $ \StratumF$ to $s(\StratumF)$ via an affine isomorphism. 
\begin{proposition}
\label{proposition_persistence_equivariant}
For all $f \in \Filt_\SComplex$ and all $s \in \SymK$ we have
\[
\persmap (f) = \persmap (s.f).
\]
Equivalently, the action of~$\SymK$ on~$\Filt_\SComplex $ restricts to the fiber $\persmap^{-1} (D)$ for every~$D \in \Barc_{\SComplex} $. Furthermore,~$\SymK$ acts through maps of polyhedra on $\persmap^{-1} (D)$.
\end{proposition}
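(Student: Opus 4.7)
The plan is to reduce everything to the observation that a simplicial symmetry $s \in \SymK$ induces an isomorphism of filtrations, and then to apply the Decomposition Theorem together with the description of fibers as polyhedral complexes from Theorem~\ref{theorem_fiber_bundle_polyhedral}.

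First I would verify the equality $\persmap(f) = \persmap(s.f)$. The key point is that for every $t \in \R$, the sublevel sets satisfy
\[
(s.f)^{-1}((-\infty,t]) = \{\simplex \in \SComplex \mid f(s^{-1}(\simplex)) \leqslant t\} = s\bigl(f^{-1}((-\infty,t])\bigr),
\]
so $s$ defines a simplicial isomorphism $K(f)(t) \isomto K(s.f)(t)$. These isomorphisms are natural in $t$ because $s$ commutes with face inclusions, so applying $\mathrm{H}_p$ yields an isomorphism of persistence modules $\mathrm{H}_p \circ K(f) \cong \mathrm{H}_p \circ K(s.f)$. By the uniqueness in the Decomposition Theorem~\cite{crawley2015decomposition}, their barcodes coincide, whence $\persmap_p(f) = \persmap_p(s.f)$ for each $p$ and therefore $\persmap(f) = \persmap(s.f)$.

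The statement that the action restricts to $\persmap^{-1}(D)$ is then immediate: if $\persmap(f) = D$, then $\persmap(s.f) = D$, so $s.f \in \persmap^{-1}(D)$.

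Finally, for the polyhedral structure on the restricted action, I would observe that the $\SymK$-action on $\Filt_\SComplex \subseteq \R^\SComplex$ is linear, since it simply permutes the coordinates labelled by simplices. In particular it sends any filter stratum $\StratumF$ to another filter stratum $s.\StratumF$ by an affine isomorphism preserving the stratum dimension (as $s.f$ and $f$ induce equivalent pre-orders on simplices up to relabelling). Recalling from Theorem~\ref{theorem_fiber_bundle_polyhedral} that the fiber $\persmap^{-1}(D)$ is the support of the polyhedral complex with cells $\PolytopeClosed = \overbar{\persmap^{-1}(D)\cap \StratumF}$, the affine map $s$ sends $\persmap^{-1}(D) \cap \StratumF$ to $\persmap^{-1}(D) \cap (s.\StratumF)$, i.e.\ sends polyhedra to polyhedra via affine isomorphisms. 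Hence the restricted action is by polyhedral maps. There is no substantive obstacle here; the only thing to be careful about is the direction of the action (using $s^{-1}$ in pre-composition), which is necessary to obtain a left action and to make the identity $(s.f)^{-1}((-\infty,t]) = s(f^{-1}((-\infty,t]))$ hold cleanly.
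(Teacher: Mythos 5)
Your argument is correct and follows essentially the same route as the paper's proof: show that $s$ induces an isomorphism of the sublevel-set filtrations (and hence of the persistence modules and barcodes), and then observe that $s$ acts affinely on filter strata, so it restricts to an affine isomorphism between the polyhedra $\persmap^{-1}(D)\cap\StratumF$ and $\persmap^{-1}(D)\cap s.\StratumF$. You merely spell out more explicitly what the paper states succinctly, in particular the identity $(s.f)^{-1}((-\infty,t]) = s\bigl(f^{-1}((-\infty,t])\bigr)$ and the appeal to uniqueness in the Decomposition Theorem.
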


\begin{proof}
The symmetry $s$ maps the sublevel-set filtration of $f$ isomorphically to that of $s.f$. Thus the associated persistence modules are isomorphic and so the two resulting barcodes in~$\Barc_\SComplex$ are the same. Hence, $f$ and $s.f$ are in the same fiber. 
Since $s$ defines an affine isomorphism from a stratum~$\StratumF$ to the stratum~$s.\StratumF$ and preserves fibers, it restricts to an affine isomorphism from $\persmap^{-1} (D) \cap \StratumF$ to $\persmap^{-1} (D)\cap s(\StratumF)$ for any barcode~$D\in \Barc_K$.
\end{proof}
\begin{proposition}
\label{proposition_symmetries_fiber}
Given two barcodes $D,D'$ and a morphism $\phi \in \IncMaps(D,D')$, the monodromy $\Monodromy_\phi: \persmap^{-1} (D)\rightarrow  \persmap^{-1} (D')$ is $\SymK$-equivariant. 
\end{proposition}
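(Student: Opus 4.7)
The plan is to verify the equivariance by an immediate computation, exploiting the fact that the $\SymK$-action is by \emph{pre}-composition whereas the monodromy is by \emph{post}-composition, so the two operations commute trivially. First I would recall the two definitions at play: given $s \in \SymK$ and $f \in \Filt_\SComplex$ we have $s.f = f \circ s^{-1}$ by definition of the $\SymK$-action, while $\Monodromy_\phi(f) = \phi \circ f$ by Definition~\ref{definition_monodromy}.

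Next I would fix $f \in \persmap^{-1}(D)$ and $s \in \SymK$ and compute both sides directly:
\[
\Monodromy_\phi(s.f) \;=\; \phi \circ (f \circ s^{-1}) \;=\; (\phi \circ f) \circ s^{-1} \;=\; s.(\phi \circ f) \;=\; s.\Monodromy_\phi(f),
\]
where the middle equality is just associativity of composition of maps. Before concluding I would check that the expressions are well-typed as elements of the relevant fibers: by Proposition~\ref{proposition_persistence_equivariant}, $s.f$ still lies in $\persmap^{-1}(D)$, so the left-hand side is a well-defined element of $\persmap^{-1}(D')$ by the $\NonDecMaps$-equivariance of $\persmap$ (Lemma~\ref{lemma_commutes_PH_homeo}); symmetrically $\Monodromy_\phi(f) \in \persmap^{-1}(D')$ and applying $s$ leaves us in $\persmap^{-1}(D')$ again by Proposition~\ref{proposition_persistence_equivariant}.

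There is no real obstacle here: the statement is essentially a formal consequence of the fact that pre-composition and post-composition commute. The only thing that needs a moment of care is that both $\SymK$ and $\NonDecMaps$ indeed preserve the fibers in question, which is exactly what Proposition~\ref{proposition_persistence_equivariant} and Lemma~\ref{lemma_commutes_PH_homeo} supply, so the proof will be a couple of lines long.
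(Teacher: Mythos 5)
Your proof is correct and takes essentially the same approach as the paper's: the paper carries out the identical computation pointwise on a simplex $\simplex$, whereas you phrase it as associativity of composition, and you additionally spell out the well-typedness check via Proposition~\ref{proposition_persistence_equivariant} and Lemma~\ref{lemma_commutes_PH_homeo}, but the argument is the same.
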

\begin{proof}
 Let~$f\in \persmap^{-1} (D)$,~$s\in \SymK$ and~$\simplex\in \SComplex $. Then the statement of the proposition follows from
 \[
 \Monodromy_\phi(s.f)(\simplex)= 
 (\phi \circ s.f) (\simplex) =
 \phi(f(s^{-1}(\simplex)))= 
 \Monodromy_\phi (f) (s^{-1} (\simplex)) =
 s.\Monodromy_\phi(f)(\simplex). \qedhere
 \]
\end{proof}

\newpage
\appendix
\section{A detailed example: the fiber of the persistence map over the triangle}
\label{section_triangle}
In this section we illustrate our theory developed so far on an example. The simplest non-contractible simplicial complex~$\SComplex$ is a triangle. We denote its vertices by~$a,b,c$ and its edges by~$ab,ac,bc$.
\begin{center}
\begin{tikzpicture}[thick, scale=0.6]
\node[] (a) at (0,-1) {$a$};
\node[] (b) at (4,-1) {$b$};
\node[] (c) at (2,3) {$c$};
\draw[] (a)--(b) node[midway,above] {$ab$};
\draw[] (a)--(c)  node[midway,above left] {$ac$};
\draw[] (b)--(c) node[midway,above right] {$bc$};

%\node[label=above:{$C_{n}(\R^d)$}] (a) at (0,0) {};
%\node[] (a) at (0,-1) {$a$};
%\node[] (c) at (-0.4,-0.4) {$\mathcal{F}$};

\end{tikzpicture}
\end{center}

In this simple case, a filter is any map $f:\SComplex\rightarrow \I$ such that its value on an edge is greater (or equal) than the value on the endpoints of this edge. By the elder rule and since $\SComplex$ is connected, $\min(f(a),f(b),f(c))$ is the left endpoint of the unique infinite interval %$(\min(f(a),f(b),f(c)),\infty)$
in~$\persmap_0(f)$. For similar reasons,~$\persmap_1(f)$ contains a unique unbounded interval with left endpoint given by $\max(f(ab),f(bc),f(ac))$. %Moreover, as $\SComplex$ contains $6$ simplices, two of which contribute to these only two unbounded intervals, the other four values determine two (possibly of length zero) intervals in $\persmap_0$. 

In addition, the example of the triangle  has interesting symmetries. The symmetries of the triangle  $\SymK=\mathbf{D}_3$ is the dihedral group which can also be identified with the symmetric group~$\Sigma_3$, the set of bijections of the set~$\{a,b,c\}$ of vertices of~$\SComplex$.

Our goal in this section is 
to determine all the barcode strata and the corresponding fibers, see Figure~\ref{fig:all_barcode_strata}; 
describe the action of~$\SymK \simeq \mathbf{D}_3$ on the fibers, and compute the monodromies between the non-discrete fibers.

{\bf Summary of the results:}
\begin{enumerate}
\item In section~\ref{section_example_computation_barcode_strata} we compute all~$34$ barcode strata in the image~$\CatBarc=\persmap(\Filt_\SComplex)$;
\item In section~\ref{section_example_computation_fibers} we compute the fibers of~$\persmap$ over the distinct barcode strata. 
We find only five strata with  fibers that are not discrete.
By Theorem~\ref{theorem_fiber_bundle_polyhedral}, these fibers are polyhedral complexes, and  we exhibit the polyhedra in~$\R^\SComplex$ making up the complexes. 
The results in section~\ref{sec:symmetry} guarantee that~$\SymK$ acts on each fiber, and we describe this action on the fibers for these five  barcode strata;
\item Finally, by the results in section~\ref{sec:theorem_2}, the closure containment relations between barcode strata yield monodromies between fibers, which up to homotopy are polyhedral maps. We describe the monodromies between non-discrete fibers in section~\ref{section_example_computation_monodromies}.
\end{enumerate}
This simple example of the triangle shows that the fibers of the persistence map can be topologically distinct from each other. Furthermore, the topology of the fibers can also be more complex than that of the underlying simplicial complex~$\SComplex$, especially for low dimensional strata in the space of barcodes.
Similar observations hold when restricting the fibers to the subspace~$\Low_{\SComplex}$ of lower star filters, as we detail in section~\ref{subsection_lowerstar_triangle}. We contrast this with the case studied in~\cite{cyranka2018contractibility} where~$\SComplex$ is a triangulation of the interval~$[0,1]$ and  lower star filters are considered. In that case the fibers of the persistence map are all disjoint unions of contractible sets. 
In particular, our example of the triangle shows that the fiber of the persistence map does not have to be
a union of contractible sets.
%We now know that this topological triviality does not extend beyond this setting. 
%
\newpage
\begin{figure}[H]
\centering
\includegraphics[width=0.9\textwidth]{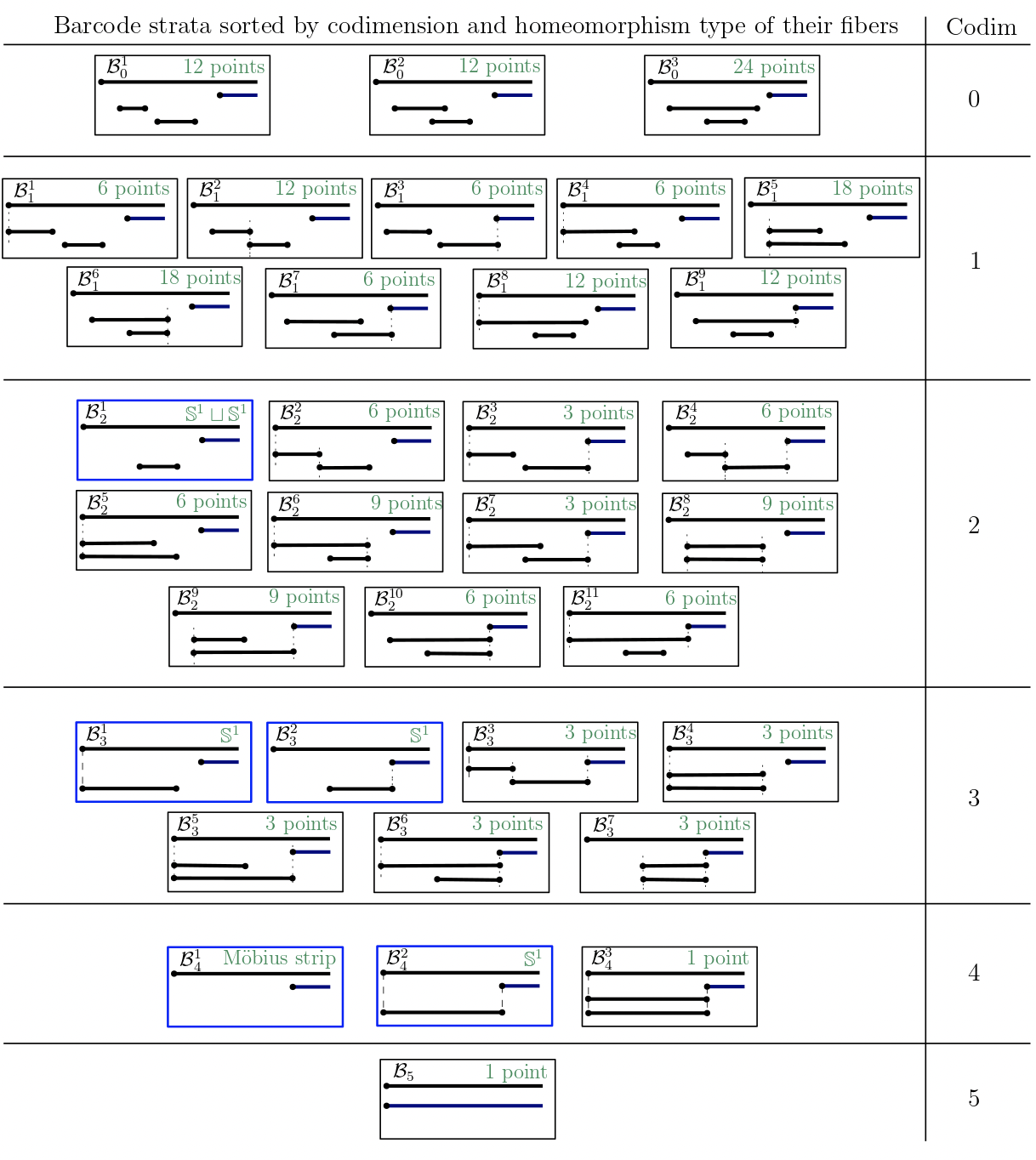}
\caption{Each stratum is represented by one of its barcodes, where the blue interval is the one corresponding to the degree~$1$ homology of the triangle and the others correspond to degree~$0$ homology. Strata come with a label,
for instance~$\Dzeromid$, whose subscript gives the codimension and the superscript  allows to enumerate strata of a given codimension. The homeomorphism type of the fiber of~$\persmap$ over each stratum is given in green. For instance, the fiber~$\persmap^{-1}(\Donemid)$ is finite and consists of~$12$ distinct filters. The blue boxes highlight the five strata with non-discrete fibers.}
\label{fig:all_barcode_strata}
\end{figure}
\subsection{Computation of~$\CatBarc$ and its strata}
\label{section_example_computation_barcode_strata}
For notational convenience, we replace the unit interval with a bigger interval, $\I:=[-10,10]$. This allows considering barcodes with only integer valued endpoints.
In addition, we restrict ourselves to strata of barcodes with endpoints strictly in~$(-10,10)$, since they completely determine strata (and their fibers) where the endpoints~$-10$ and~$10$ are allowed.

{\bf The top dimensional filter strata} of the space of filters~$\Filt_\SComplex$ correspond to injective filters and can equivalently be thought of as orderings of the simplices in the triangle, where an edge must appear after its vertices. %Each stratum contains a unique filter $f$ with image 
%$\mathrm{Im} f=\{0,1,2,3,4,5\}$. 
This allows us to count  these top dimensional strata. Namely, considering the case where the vertices of the triangle appear before all the edges, and separately the case where one edge appears before the last vertex,  we get the following count of  possible orderings and hence 
\[
 3! \times 3! + 3 \times 2 \times 1 \times 1 \times 2! = 36 + 12 = 48 \text{ top dimensional filter strata.}
\]

{\bf The top dimensional barcode strata} in the image~$\CatBarc=\persmap(\Filt_\SComplex)$, by Proposition~\ref{proposition_image_PH_determined_by_topdim}, are given by the image of the top dimensional filter strata. We argue that there are precisely three:
From section~\ref{sec:symmetry}, the persistence map is~$\SymK$-equivariant, so we may restrict ourselves to those filter strata, viewed as orderings, for which the vertex~$a$ is first, $f(a)=0$, followed by~$b$, $f(b)=1$. The following simplex must either be~$ab$ or~$c$. In the first case, $f(ab)=2$, all filters satisfying this yield the same barcode $[\{(0,\infty),(1,2),(3,4)\},\{(5,\infty)\}]$. We denote by~$\Dzeroleft$ the corresponding codimension~$0$ barcode stratum. In the second case, $f(c)=2$, we get two other barcodes depending on whether~$ab$ is the next simplex in the ordering or not, $[\{(0,\infty),(1,3),(2,4)\},\{(5,\infty)\}]$ and $[\{(0,\infty),(1,4),(2,3)\},\{(5,\infty)\}]$. We denote the corresponding barcode strata by~$\Dzeromid$ and~$\Dzeroright$. 

{\bf The list of all barcode strata} in~$\CatBarc$, by  Proposition~\ref{proposition_image_PH_determined_by_topdim},
 can be derived from the three top dimensional barcode strata~$\Dzeroleft$,~$\Dzeromid$ and~$\Dzeroright$ by collapsing their interval endpoints. We draw all the~$34$ resulting strata in Figure~\ref{fig:all_barcode_strata}, sorted by codimension, and give them labels that are used in the rest of the section.  
\subsection{Computation of fibers and the action of~$\SymK$}
The fibers over the various barcode strata in the image of the persistence map, whose computations are detailed in this section, are summarised in Figure~\ref{fig:all_barcode_strata} (in green in the top right corner of each box).
\label{section_example_computation_fibers}
\paragraph{Strata with discrete fibers.}
Most of the barcode strata have finite fibers. For instance, the unique lowest-dimensional stratum is the one labelled by~$\Dfive$: there only the two essential homological features are present, and appear at the same time. This implies that all the simplices of the triangle must appear at a given time, and therefore the fiber of $\persmap$ over~$\Dfive$ consists of a unique constant function. This agrees with the prediction of Proposition~\ref{proposition_barcode_minimal_endpoints} that the fiber is contractible. 
The barcode strata with maximal number of bounded intervals, that is~$2$ such intervals, have zero bounded deficit (Def.~\ref{definition_barcode}). There are~$28$ such strata. By Proposition~\ref{proposition_improved_bounding_dimension_fiber}, their fibers are discrete. For instance, a simple counting argument gives discrete fibers for the top dimensional barcode strata~$\Dzeroleft$,~$\Dzeromid$ and~$\Dzeroright$, with~$12$,~$12$, and~$24$ points in their fibers respectively. More generally, Proposition~\ref{proposition_improved_bounding_dimension_fiber} upper-bounds the dimension of the fibers over arbitrary strata by their bounded deficit. In this example, note that the bounded deficit in fact equals the dimension of the fiber in all cases, except in the degenerate case of the stratum~$\Dfive$. In the remainder of this section we compute explicitly the fibers over the five barcode strata that have fibers of dimension greater than~$0$. 

\paragraph{Stratum $\Dtwo$.} We take a representative barcode~$D:=[\{(b_1,\maxvalue), (b_2,d_2)\}, \{(b_3,\maxvalue)\}]\in \Barc^{2}$, where $b_1<b_2<d_2<b_3$, of the codimension $2$ stratum~$\Dtwo$.
\begin{figure}[H]
\centering
\includegraphics[width=0.15\textwidth]{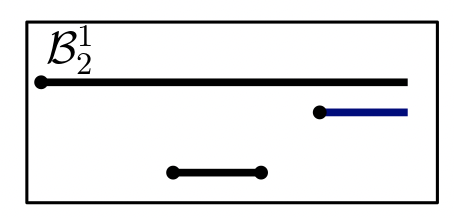}
\end{figure}
Without loss of generality, $(b_1,b_2,d_2,b_3)=(0,1,2,3)$. We use the identification of a filter~$f$ with the vector $(f(a),f(b),f(c),f(ab),f(ac),f(bc))\in \I^6$. If~$f$ yields barcode~$D$, then:
\begin{itemize}
    \item There is a vertex $v$ (resp. an edge $e$) at which $f$ attains its minimum (resp. maximum) value, equal to $0$ (resp. $3$).
    \item There is another vertex $v'$ and edge $e'$ such that $f(v'),f(e')=1,2$.
    \item The remaining vertex and {\bf incident} edge have same value $0\leqslant t \leqslant 3 $.
\end{itemize}

Let us assume that the vertex $a$ should create the first connected component, while the vertex $c$ should create the second one. Then the vertex $b$ must appear at the same time as an edge connecting $b$ to either $a$ or $c$. If $b$ appears at the same time as $ab$, and if we wish that $bc$ is the edge closing the loop, we get a~$1$-simplex $\{( 0, t, 1, t,2, 3)\}_{0 \leqslant t \leqslant 3}$ in the fiber, which we break into three~$1$-simplices that lie in (the closures of) different filter strata: $\{( 0, t, 1, t,2, 3)\}_{0 \leqslant t \leqslant 1}$,
$\{(0, t, 1, t,2, 3)\}_{1 \leqslant t \leqslant 2}$ and 
$\{(0, t, 1, t, 2,3)\}_{2\leqslant t \leqslant 3}$. If rather the edge~$ac$ closes the loop of the triangle, we get the two sets $\{(0, t, 1, t, 3, 2)\}_{1 \leqslant t \leqslant 2}$ and
$\{( 0, t, 1, t,3, 2)\}_{0\leqslant t \leqslant 1}$ in the fiber. Now, let us assume that $b$ appears at the same time as $bc$, which imposes that $b$ appears after time $1$. Then if $ab$ closes the loop of the triangle, we get the two sets 
$\{(0, t, 1, 3, 2,t)\}_{1 \leqslant t \leqslant 2}$ and
$\{(0, t, 1, 3, 2,t)\}_{2\leqslant t \leqslant 3}$ in the fiber, while if we wish that the edge $ac$ closes this loop, we get the unique set $\{( 0, t, 1, 2,3, t)\}_{1\leqslant t \leqslant 2}$ in the fiber. All in all, we have gathered~$8$ embeddings of the standard $1$-simplex in~$\I^6$. By symmetry, if we vary the choice of two vertices that create the first two connected components, we get~$6$ analogous collections of~$8$ embeddings of the standard $1$-simplex described above, which together cover the fiber. This provides a description of the fiber in terms of a graph, whose incidence structure is explicited in Fig.~\ref{fig:fiber_case_D}. In particular, the fiber of $\persmap$ over $D$ is homeomorphic to $\mathbb{S}^1\sqcup \mathbb{S}^1$. 

We further depitct the action of~$\SymK$ on~$\persmap^{-1}(D)$ in Fig.~\ref{fig:fiber_case_D}. We consider the cyclic map~$(a,b,c)\mapsto (b,c,a)$ and the elementary transposition~$(a,b,c)\mapsto (b,a,c)$ as generators. We see that the cyclic map preserves the two connected components of the fiber~$\mathbb{S}^1\sqcup \mathbb{S}^1$, whereas the elementary transposition swaps them. More generally, even permutations~$g\in \SymK$ preserve the connected components of the fiber, while odd permutations exchange them.
\begin{figure}[H]
\centering
%width=0.8\linewidth, height=12.5cm
\includegraphics[width=0.62\textwidth]{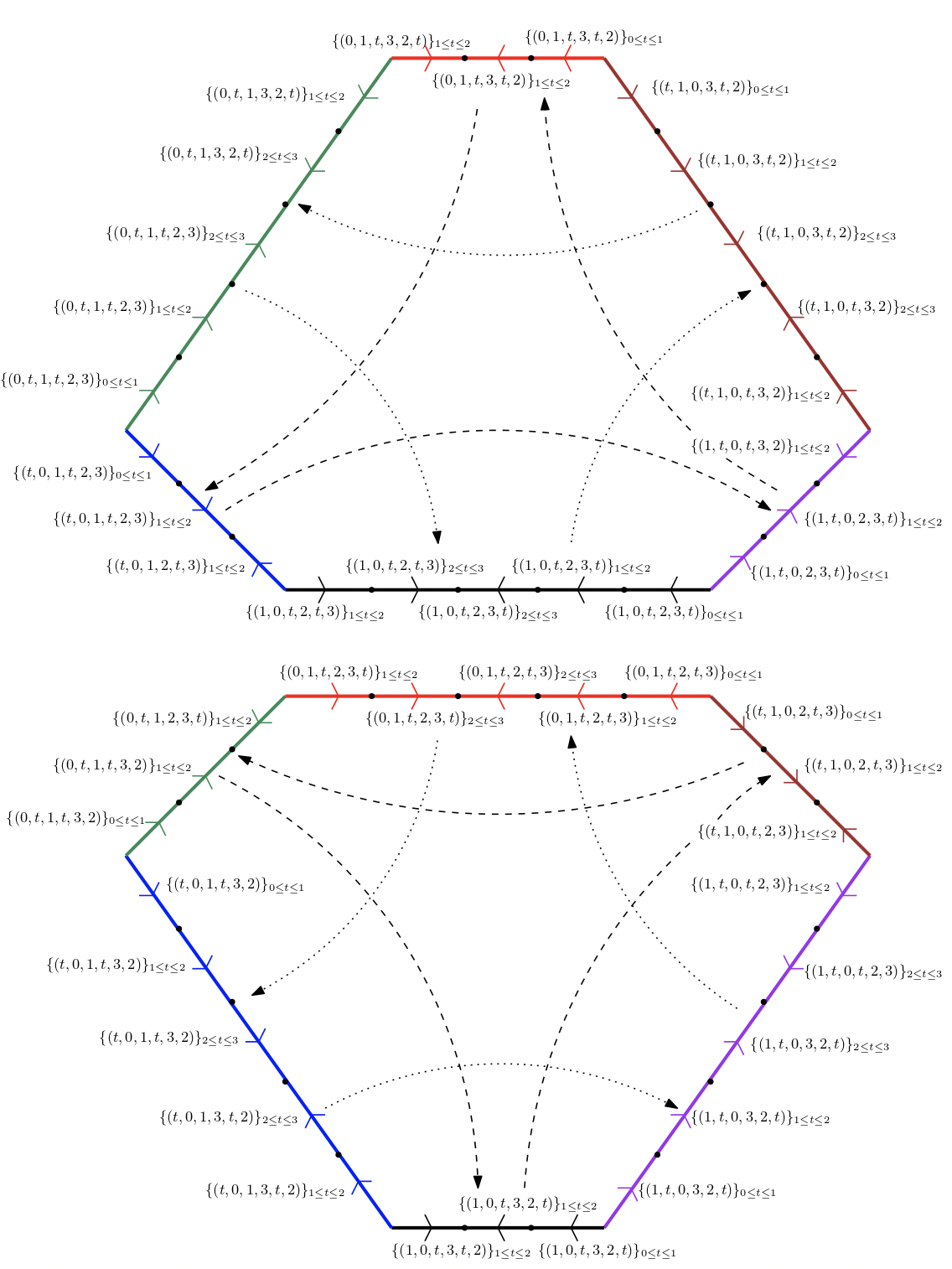}
\caption{The fiber of the barcode in stratum~$\Dtwo$. We observe two connected components. Each edge represents an embedding of the standard $1$-simplex in the fiber, oriented with an arrow toward increasing values of the parameter~$t$. The six colors correspond to the six possible choices of two vertices responsible for the appearance of the first two connected components. If~$a$ and~$c$ (resp.~$a$ and~$b$,~$b$ and~$a$,~$b$ and~$c$,~$c$ and~$a$,~$c$ and~$b$) create these components, we color the edge in green (resp. red, black, blue, purple and brown). The cyclic symmetry $(a,b,c)\mapsto (b,c,a)$ acts on the fiber by rotating each connected component by an angle of~$\frac{2\pi}{3}$, as depicted by the dotted arrows. The transposition $(a,b,c)\mapsto (b,a,c)$ acts by swapping the connected components by reflecting along the horizontal axis.}
\label{fig:fiber_case_D}
\end{figure}
\paragraph{Stratum~$\Dthreemid$.}
Fixing two endpoints via~$d_2=b_3$ in the stratum~$\Dtwo$, we get the stratum~$\Dthreemid$ represented below.
\begin{figure}[H]
\centering
\includegraphics[width=0.15\textwidth]{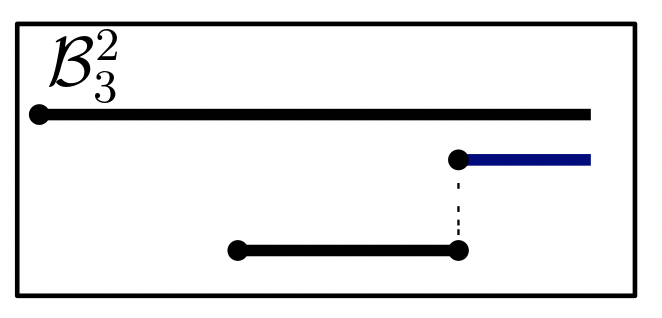}
\end{figure}
As a representative of this stratum, we consider the barcode $D=[\{(0,\infty),(1,2)\},\{(2,\maxvalue)\}]\in \Barc^{2}$. The fiber of~$D$ can be obtained similarly as in the previous case, that is by providing a cover of the fiber by embeddings of the $1$-simplex in~$\I^6$. Imposing that vertices~$a$ and then~$c$ should be responsible for the appearance of the first two components, we get the sets $\{(0, t, 1, t,2, 2)\}_{0\leqslant t \leqslant 1}$,  $\{( 0, t, 1, t,2, 2)\}_{1\leqslant t \leqslant 2}$ and
$\{( 0, t, 1, 2,2, t)\}_{1\leqslant t \leqslant 2}$ in the fiber. By symmetry in the choice of these two vertices, we get a total of~$6\times 3$ embeddings of the standard $1$-simplex that together cover the fiber. This decomposition describes the fiber as a graph, which is described in Fig~\ref{fig:fiber_case_D'} and is homeomorphic to~$\mathbb{S}^1$. The action of the symmetries on the circle is described in the figure as well. 
\begin{figure}[h]
\centering
\includegraphics[width=0.8\linewidth, height=10cm]{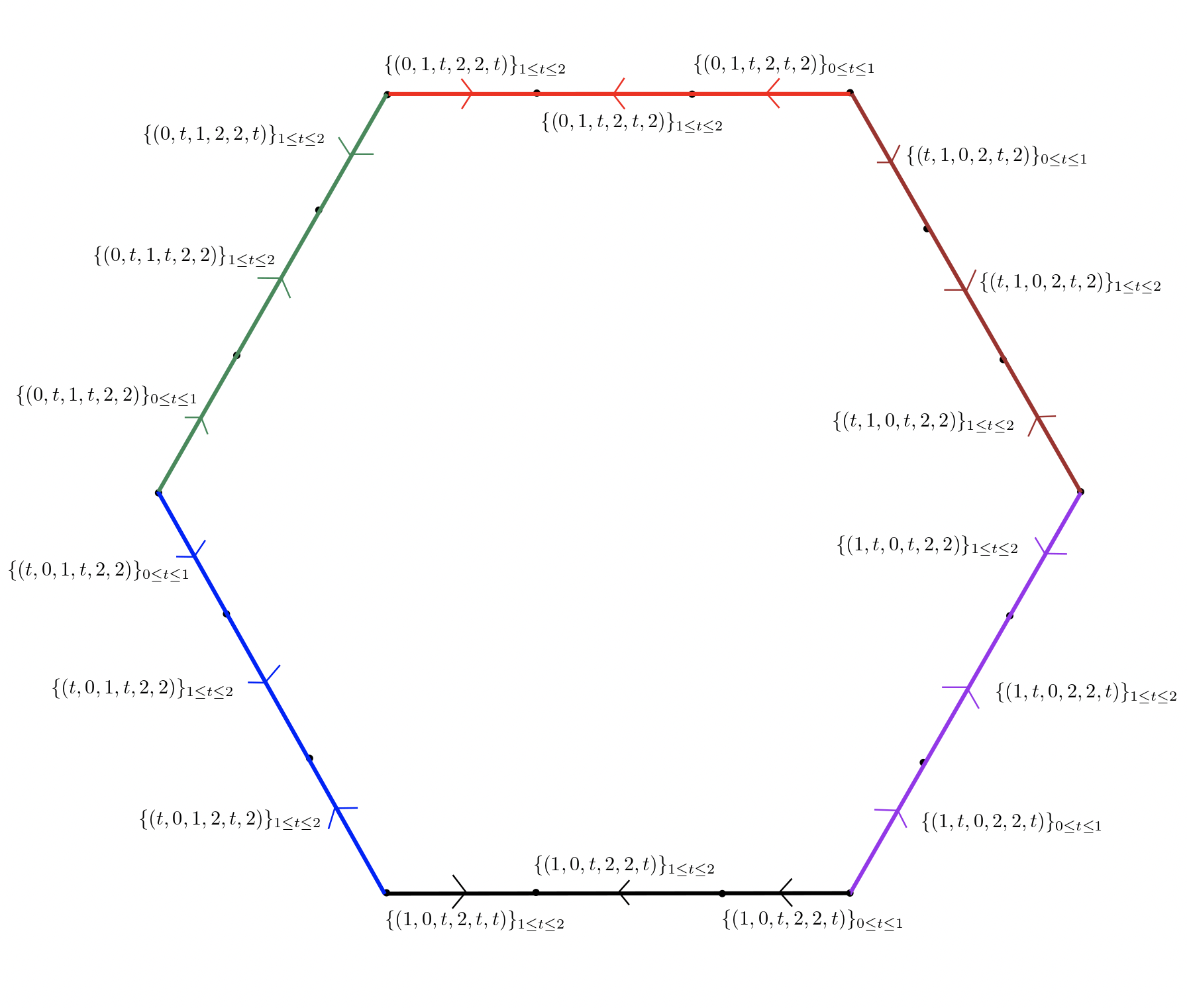}

\caption{The fiber of the barcode in stratum~$\Dthreemid$. Each edge represents an embedding of the standard $1$-simplex in the fiber, oriented toward increasing values of the parameter~$t$. The six colors correspond to the six possible choices of two vertices responsible for the appearance of the first two connected components in the barcode. The coloring convention is the same as in case~$\Dtwo$, Fig~\ref{fig:fiber_case_D}. The cyclic permutation of the triangle acts as a rotation by an angle of~$\frac{2\pi}{3}$ oriented counter-clockwise, while the elementary transposition acts as the horizontal symmetry of the hexagon.} 
%This action is further inherited from the one described in stratum~$\Dtwo$ by the Figures~\ref{fig:fiber_case_D_internal_symmetry} and~\ref{fig:fiber_case_D_transversal_symmetry}, under the process of mapping the fiber of stratum~$\Dtwo$ to the fiber of stratum~$\Dthreemid$, see Fig~\ref{fig:caseD_to_D'}. 
\label{fig:fiber_case_D'}
\end{figure}
\paragraph{Stratum~$\Dthreeleft$.}
A representative of the stratum~$\Dthreeleft$ is $D:=[\{(0,\infty),(0,1)\},\{(2,\infty)\}]\in \Barc^{2}$.
\begin{figure}[H]
\centering
\includegraphics[width=0.15\textwidth]{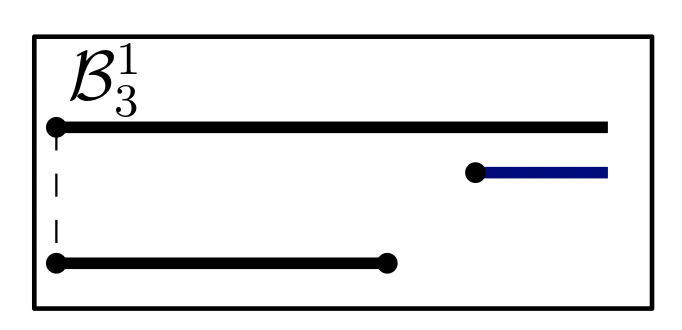}
\end{figure}
We describe the fiber with an explicit cover by embeddings of the $1$-simplex in~$\I^6$. Imposing that vertices~$a$ and~$c$ should be responsible for the appearance of the first two components, we get the following embeddings:
\begin{itemize}
\item  $\{(0, t, 0, 1,2, t)\}_{0\leqslant t \leqslant 1}$,  $\{( 0, t, 0, 2,1, t)\}_{0\leqslant t \leqslant 1}$,
$\{( 0, t, 0, t,1, 2)\}_{0\leqslant t \leqslant 1}$, $\{( 0, t, 0, t,2, 1)\}_{0\leqslant t \leqslant 1}$,$\{( 0, t, 0, t,1, 2)\}_{1\leqslant t \leqslant 2}$ and $\{( 0, t, 0, 2,1, t)\}_{1\leqslant t \leqslant 2}$.
\end{itemize} 
By symmetry in the choice of these two vertices, we get a total of~$3\times 6$ embeddings of the standard $1$-simplex that together cover the fiber. By inspecting adjacency relations, this decomposition describes the fiber as a graph isomorphic to the graph of the fiber over~$\Dthreemid$. In particular, the fiber is homeomorphic to~$\mathbb{S}^1$.

\paragraph{Stratum~$\Dfourleft$.} 
Let $D=[\{(b_1,\maxvalue)\},\{ (b_2,\maxvalue)\}]\in \Barc^2$ (where~$b_1<b_2$) be a barcode in the stratum~$\Dfourleft$. 
\begin{figure}[H]
\centering
\includegraphics[width=0.15\textwidth]{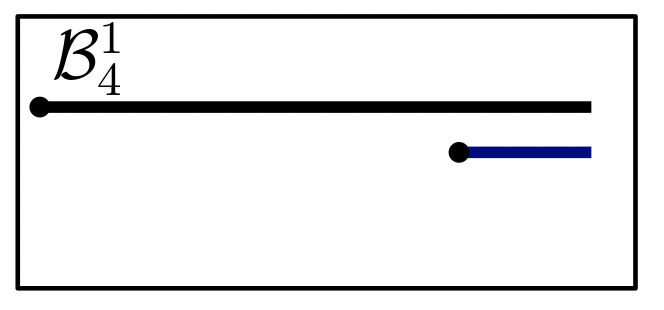}
\end{figure}
We may set~$b_1=0$ and~$b_2=1$ for simplicity. If a filter~$f:\SComplex\rightarrow \I$ yields the barcode~$D$, then there are two pairs~$(v_i,e_i)_{i=1,2}$ of vertices and edges such that $0\leqslant f(e_i)=f(v_i)=: t_i \leqslant 1$ and the remaining vertex~$v_0$ and edge~$e_0$ realize the minimum~$0$ and maximum~$1$ of~$f$ respectively. 

For a fixed choice of~$(v_i, e_i)_{i=0,1,2}$ as above, the two parameters~$t_1 \leq t_2$ describe a 2-simplex. This $2$-simplex corresponds to one of the top dimensional polyhedron in the fiber of~$\persmap$ over~$D$, which we know is a polyhedral complex from Theorem~\ref{theorem_fiber_bundle_polyhedral}. To count and describe these 2-simplices, fix~$v_0$, say~$v_0 =a$. 
Then we distinguish between the two cases: (i) where~$e_0$ contains~$v_0$ and (ii) where~$e_0$ does not contain~$v_0$. In case (i), if we choose~$e_0 = ac$, then~$b$ and~$ab$ simultaneously appear at time~$t_1$ and finally~$c$ and~$bc$ appear at time~$t_2$. The resulting simplex in the fiber is denoted by~$\FirstSimplexMobius:=\{(0, t_1, t_2, t_1, 1,t_2)\}_{0\leqslant t_1\leqslant t_2 \leqslant 1}$. In case (ii), there is only one choice for~$e_0$, i.e.~$e_0=bc$. The other vertex-edge pairs have to be the pairs~$(b, ab)$ and~$(c,ac)$. If we decide that the pair~$(b, ab)$ appears before~$(c,ac)$, we obtain the simplex~$\SecondSimplexMobius:=\{(0, t_1, t_2, t_1,t_2, 1)\}_{0\leqslant t_1\leqslant t_2 \leqslant 1}$. All the other simplices in the fiber may be derived from the action of~$\SymK$ on~$\FirstSimplexMobius$ and~$\SecondSimplexMobius$. More precisely, letting~$\tau:(a,b,c)\mapsto (b,a,c)$ be the elementary transposition and~$c:(a,b,c)\mapsto (b,c,a)$ the cyclic permutation, we obtain all the $2$-simplices in the fiber:
\begin{itemize}
\item[(i)] $\FirstSimplexMobius=\{(0, t_1, t_2, t_1, 1,t_2)\}$, 
$\tau.\FirstSimplexMobius=\{(t_1, 0, t_2, t_1,t_2, 1)\}_{0\leqslant t_1\leqslant t_2 \leqslant 1}$, 
$c.\FirstSimplexMobius=\{(t_2, 0, t_1, 1,t_2, t_1)\}_{0\leqslant t_1\leqslant t_2 \leqslant 1}$, 
$c^2.\FirstSimplexMobius=\{(t_1, t_2, 0, t_2,t_1, 1)\}_{0\leqslant t_1\leqslant t_2 \leqslant 1}$, 
$\tau c.\FirstSimplexMobius=\{(0, t_2, t_1, 1,t_1, t_2)\}_{0\leqslant t_1\leqslant t_2 \leqslant 1}$, 
$\tau c^2.\FirstSimplexMobius=\{(t_2, t_1, 0, t_2,1, t_1)\}_{0\leqslant t_1\leqslant t_2\leqslant 1}$;
\item[(ii)] $\SecondSimplexMobius=\{(0, t_1, t_2, t_1,t_2, 1)\}_{0\leqslant t_1\leqslant t_2 \leqslant 1}$, 
$\tau. \SecondSimplexMobius=\{(t_1, 0, t_2, t_1,1, t_2)\}_{0\leqslant t_1\leqslant t_2 \leqslant 1}$, 
$c. \SecondSimplexMobius=\{(t_2, 0, t_1, t_2,1, t_1)\}_{0\leqslant t_1\leqslant t_2 \leqslant 1}$, 
$c^2. \SecondSimplexMobius=\{(t_1, t_2, 0, 1,t_1, t_2)\}_{0\leqslant t_1\leqslant t_2 \leqslant 1}$, 
$\tau c. \SecondSimplexMobius=\{(0, t_2, t_1, t_2,t_1, 1)\}_{0\leqslant t_1\leqslant t_2 \leqslant 1}$, 
$\tau c^2. \SecondSimplexMobius=\{(t_2, t_1, 0, 1,t_2, t_1)\}_{0\leqslant t_1\leqslant t_2 \leqslant 1}$. 
\end{itemize} 
All the~$12$ sets are embeddings of the $2$-simplex~$\StandSimplex^2$ in~$\I^6$, where we represent the simplex~$\StandSimplex^2$ in~$\R^2$ conveniently for our purpose as in Fig~\ref{fig:two_simplex}.
\begin{figure}[H]
\centering
\includegraphics[width=0.4\textwidth]{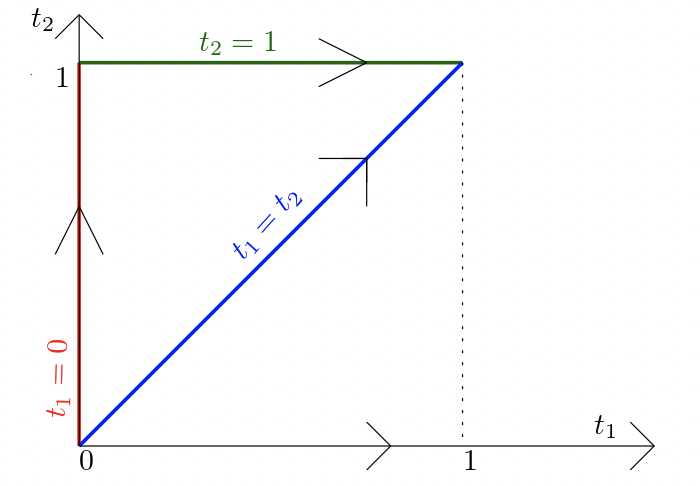}
\caption{An embedding of the standard simplex~$\StandSimplex^2$, with faces in red, green and blue.}
\label{fig:two_simplex}
\end{figure}

The $2$-simplices in the orbit of~$\SecondSimplexMobius$ meet with~$3$ distinct other $2$-simplices at its~$3$ faces (obtained by setting~$t_1=0$,~$t_2=1$ or~$t_1=t_2$). The $2$-simplices in the orbit of~$\FirstSimplexMobius$ only meet with two other simplices. Glued together, these simplices form a Möbius band embedded in~$\R^6$, as explicited in Figure~\ref{fig:fiber_triangle_mobius}. Therefore the fiber of the persistence map over the barcodes in the stratum~$\Dfourleft$ is isomorphic, as a simplicial complex, to the Möbius band. 

We again consider how the symmetry group~$\Sigma_3$ of the triangle acts on the fiber. Note that the action on the fiber must preserve the orientations and colors described in Fig.~\ref{fig:fiber_triangle_mobius}. The elementary transposition simply rotates the Möbius band by an angle of~$\pi$. The action of the cyclic permutation is slightly more involved as it reverses and translates the Möbius band.
\begin{figure}[H]
\centering
\includegraphics[width=0.8\textwidth]{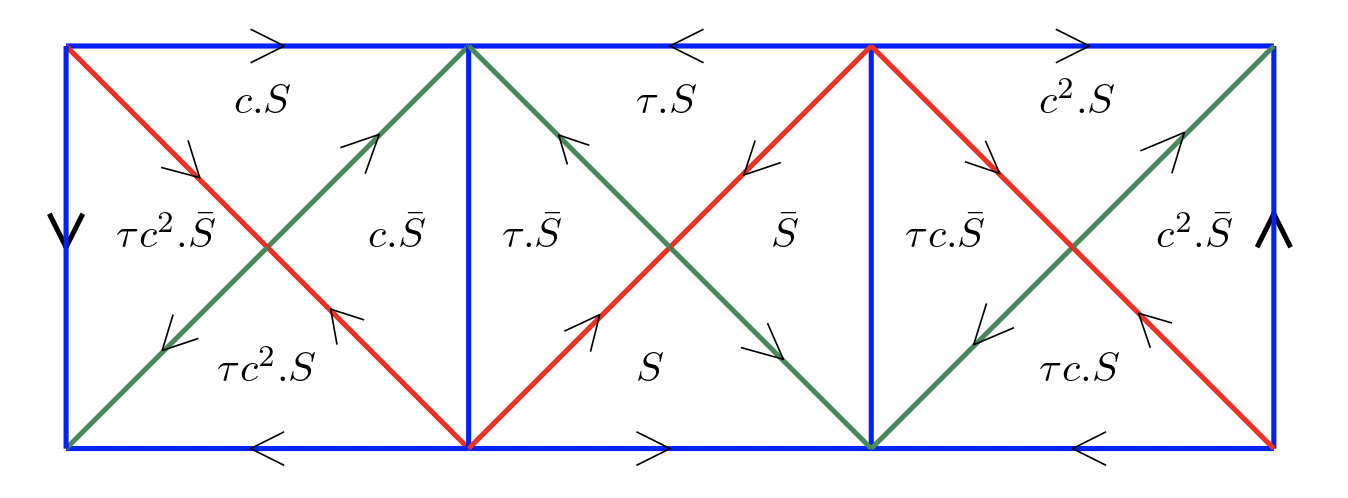}
\caption{The $2$-simplices in the fiber of the stratum~$\Dfourleft$ glued together in a Möbius strip. Blue (resp. red and green) edges correspond to setting~$t_1=t_2$ (resp.~$t_1=0$ and~$t_2=1$) in their co-faces. The left and right extreme blue oriented edges are identified. 
The action of the transposition~$(a,b,c)\mapsto (b,a,c)$ on the fiber can be described as a rotation of the Möbius strip by an angle of~$\pi$.
The action of the cyclic permutation~$(a,b,c)\mapsto(b,c,a)$ on the fiber can be described as the composition of (i) the symmetry of the Möbius strip around its middle horizontal line, followed by (ii) a unit translation on the left of each simplex.}
\label{fig:fiber_triangle_mobius}
\end{figure}
\paragraph{Stratum~$\Dfourright$.}
The last stratum of barcodes whose fiber we explicitely compute has representative $D:=[\{(0,\maxvalue)\},\{(0,1),(1,\maxvalue)\}]$:
\begin{figure}[H]
\centering
\includegraphics[width=0.15\textwidth]{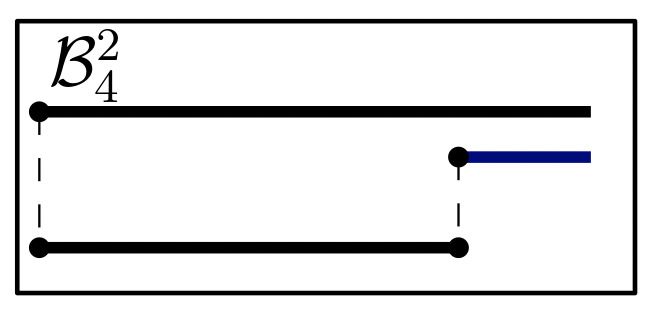}
\end{figure}
In this case, the fiber is the union of the segments:
\begin{itemize}
    \item $\{( 0, t, 0, t,1, 1)\}_{0\leqslant t \leqslant 1}$, $\{( 0, t, 0, 1,1, t)\}_{0\leqslant t \leqslant 1}$, 
    $\{( 0, 0, t, 1,1, t)\}_{0\leqslant t \leqslant 1}$, $\{( 0, 0, t, 1,t, 1)\}_{0\leqslant t \leqslant 1}$, 
    $\{( t, 0, 0, 1,t, 1)\}_{0\leqslant t \leqslant 1}$, $\{( t, 0, 0, t,1, 1)\}_{0\leqslant t \leqslant 1}$; 
\end{itemize}
which assemble into a regular hexagon, and therefore the fiber of~$\persmap$ over the stratum~$\Dfourright$ is homeomorphic to~$\mathbb{S}^1$. 

\subsection{Computation of monodromies between fibers}
\label{section_example_computation_monodromies}
We describe the monodromies between non-discrete fibers. We focus on pairs of barcode strata that differ by one dimension, since a monodromy between an arbitrary pair of strata is a composition of such elementary monodromies by Corollary~\ref{corollary_monodromoy_homotopic_polyhedral_and_composition}.
\paragraph{Monodromy from~$\Dtwo$ to~$\Dthreeleft$ and~$\Dthreemid$.}

From the previous section, the fibers over~$\Dthreeleft$ and~$\Dthreemid$ are isomorphic cyclic graphs. We only describe monodromies from~$\persmap^{-1}(\Dtwo)$ to~$\persmap^{-1}(\Dthreemid)$ since monodromies from~$\persmap^{-1}(\Dtwo)$ to~$\persmap^{-1}(\Dthreeleft)$ are identical. The stratum~$\Dtwo$ contains the stratum~$\Dthreemid$ in its closure. From Lemma~\ref{lemma_closure_barcode_strata}, this ensures that the set of morphisms~$\IncMaps(\Btwo,\Bthreemid)$ is non-empty for any representatives~$(\Btwo,\Bthreemid)\in \Dtwo \times \Dthreemid$. For clarity, we take as representatives the barcodes from the previous section, that is $\Btwo=[\{(0,\maxvalue),(1,2)\},\{(3,\maxvalue)\}]$ and $\Bthreemid=[\{(0,\maxvalue),(1,2)\},\{(2,\maxvalue)\}]$.

From section~\ref{sec:theorem_2}, the number of different homotopy classes of monodromies from~$\persmap^{-1}(\Btwo)$ to~$\persmap^{-1}(\Bthreemid)$ is upper-bounded by the cardinality of~$\pi_0(\IncMaps(\Btwo,\Bthreemid))$. Moreover, the homotopy type of an element~$\phi \in \IncMaps(\Btwo,\Bthreemid)$ is completely characterized by its index by Remark~\ref{remark_index_invariant}. In the current situation, any map~$\phi$ from~$\Btwo$ to~$\Bthreemid$ must collapse the third and fourth endpoint of~$\Btwo$. This means that up to homotopy, there is a unique monodromy map from~$\persmap^{-1}(\Btwo)$ to~$\persmap^{-1}(\Bthreemid)$ to describe.
It is then natural to choose~$\phi$ to be any linear extension of the map:
\[\phi: \{0,1,2,3\}\mapsto \{0,1,2,2\}.\]
The resulting monodromy~$\Monodromy_\phi$, from Proposition~\ref{proposition_monodromy}, is a simplicial map between fibers. To visualize~$\Monodromy_\phi$, it is convenient to see how the fiber~$\persmap^{-1}(\Btwo)$ is progressively deformed into a subset of~$\persmap^{-1}(\Bthreemid)$ under the path~$t\mapsto t \Monodromy_\phi + (1-t)\text{Id}$. As depicted in Fig.~\ref{fig:caseD_to_D'}, this transformation has the effect to collapse some edges and to identify some edges and nodes. 

In the same figure, we see that the monodromy is a surjection onto the fiber~$\persmap^{-1}(\Bthreemid)$. By Proposition~\ref{proposition_symmetries_fiber}, the equivariance of the monodromy w.r.t. the~$\SymK=\Sigma_3$ action on the fibers predicts that the image, through the monodromy, of the action on~$\persmap^{-1}(\Btwo)$ must equal the action on~$\persmap^{-1}(\Bthreemid)$. Let us for instance consider the cyclic permutation $(a,b,c)\mapsto (b,c,a)$ of the triangle, which acts on the two hexagons constituting~$\persmap^{-1}(\Btwo)$ by rotation of an angle of~$\frac{2\pi}{3}$. The monodromy identifies the two hexagons in the fiber, hence the induced action of~$g$ on~$\persmap^{-1}(\Bthreemid)$ is the rotation by the same angle, which agrees with the direct computation of the action of~$g$ on~$\persmap^{-1}(\Bthreemid)$ in Fig.~\ref{fig:fiber_case_D'}. The same observation can be made about the elementary transposition $\tau:(a,b,c)\mapsto (b,a,c)$ of the triangle.

\begin{figure}[H]
\includegraphics[width=0.9\linewidth, height=12cm]{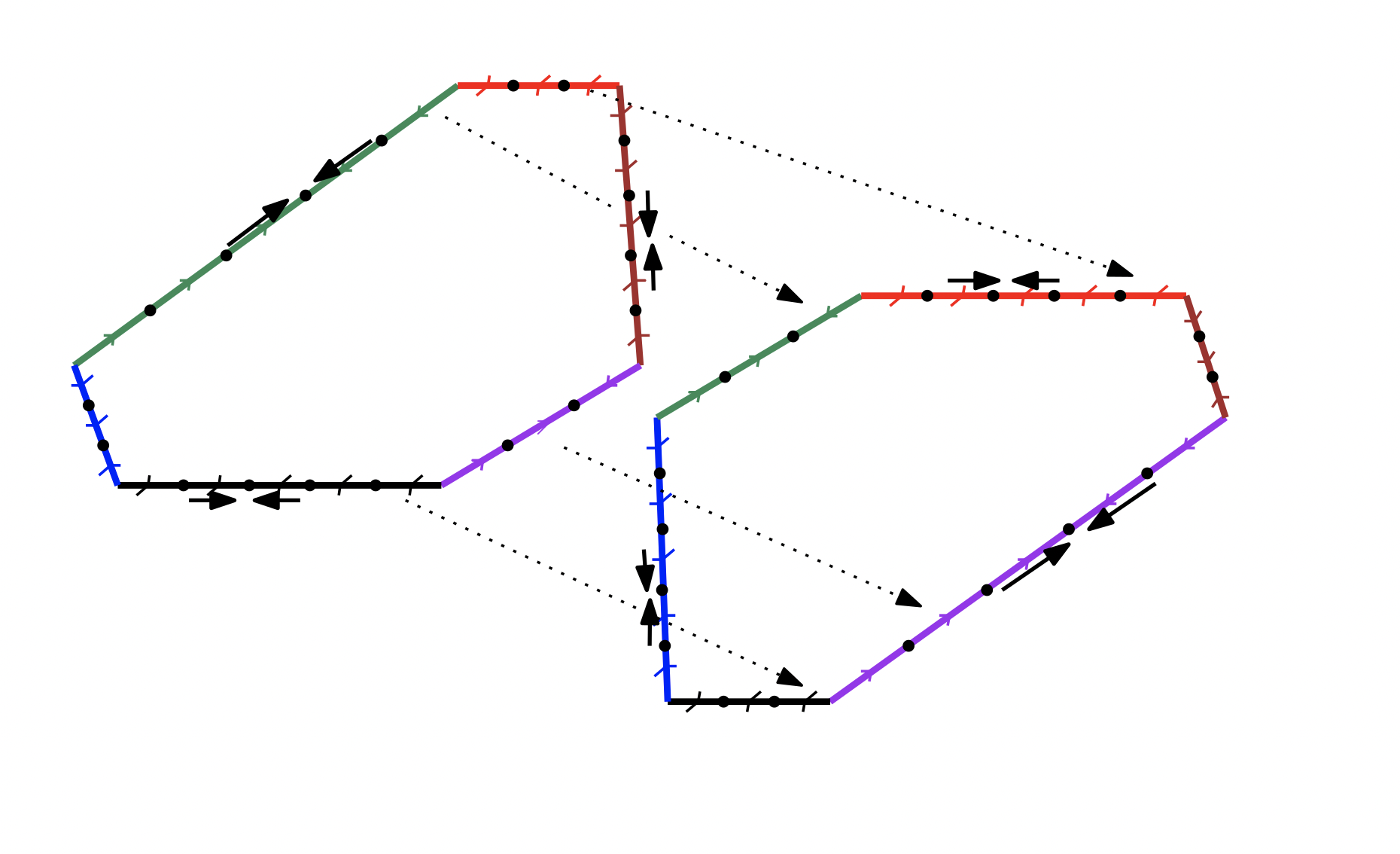}
\caption{Starting with the two irregular hexagons describing the fiber of the barcode $[\{(0,\maxvalue), (1,2)\},\{ (3,\maxvalue)\}]$ of stratum~$\Dtwo$ (see Fig.~\ref{fig:fiber_case_D}), the arrows describe the process of continuously tracking the fiber as the interval~$(3,\maxvalue)$ gets closer to the interval~$(2,\maxvalue)$, thus ending to the barcode $[\{(0,\maxvalue), (1,2)\}, \{(2,\maxvalue)\}]$ of stratum~$\Dthreemid$. Plain arrows show edges of the fiber that are collapsed during this process. Meanwhile, the two hexagons merge into the regular hexagon depicted in Fig.~\ref{fig:fiber_case_D'}. This merging happens by identifying edges of the two components, of the same color and orientation, following the dotted arrows. }
\label{fig:caseD_to_D'}
\end{figure}
\paragraph{Monodromy from~$\Dthreeleft$ and~$\Dthreemid$ to~$\Dfourleft$.}

The stratum~$\Dthreemid$ contains the stratum~$\Dfourleft$ in its closure. We let $\Bthreemid:=[\{(0,\maxvalue),(1,2)\},\{(2,\maxvalue)\}]$ be the representative barcode of the stratum~$\Dthreemid$, and $\Bfourleft:=[\{(0,\maxvalue)\},\{(1,\maxvalue)\}]$ be that of~$\Dfourleft$, both as in the previous section. The set of morphisms~$\IncMaps(\Bthreemid,\Bfourleft)$ is non-empty, and in fact contains a unique homotopy class by Remark~\ref{remark_index_invariant}, since all maps~$\phi\in \IncMaps(\Bthreemid,\Bfourleft)$ must collapse the interval~$(1,2)$. It is then natural to choose~$\phi$ to be any linear extension of the map:
\[\phi: \{0,1,2\}\mapsto \{0,1,1\}.\]
From Proposition~\ref{proposition_monodromy}, the resulting monodromy~$\Monodromy_\phi: \persmap^{-1}(\Bthreemid)\rightarrow \persmap^{-1}(\Bfourleft)$ is a map of polyhedral complexes. Recall that the fiber~$\persmap^{-1}(\Bthreemid)$ is a polyhedral complex described in Fig.~\ref{fig:fiber_case_D'}. The monodromy map~$\Monodromy_\phi$ collapses~$12$ out of the~$18$ edges in~$\persmap^{-1}(\Bthreemid)$. The remaining~$6$ edges form a regular hexagon which is mapped onto the green circle of the Möbius strip describing the fiber of~$\Dfourleft$, see Fig~\ref{fig:fiber_triangle_mobius}. Likewise, the monodromy from the fiber of (a representative of) the stratum~$\Dthreeleft$ to the fiber over~$\Dfourleft$ is unique up to homotopy, and collapses~$12$ out of the~$18$ edges in the fiber over~$\Dthreeleft$, sending the remaining~$6$ edges onto the red circle of the Möbius strip describing the fiber of~$\Dfourleft$.

\paragraph{Monodromy from~$\Dthreeleft$ and~$\Dthreemid$ to~$\Dfourright$.}
Recall that the fiber over~$\Dthreemid$ is a cyclic graph with~$18$ edges depicted in Figure~\ref{fig:fiber_case_D'}, while the fiber over~$\Dfourright$ is a regular hexagon. There is again a unique (up to homotopy) monodromy~$\Monodromy_\phi$ between these fibers. The simplicial map~$\Monodromy_\phi$ collapses~$12$ out of the~$18$ edges in~$\persmap^{-1}(\Bthreemid)$. The remaining~$6$ edges (one for each color in Figure~\ref{fig:fiber_case_D'})  form the regular hexagon~$\persmap^{-1}(\Dfourright)$. In particular,~$\Monodromy_\phi$ is a fibration. The monodromy from~$\persmap^{-1}(\Bthreeleft)$ to~$\persmap^{-1}(\Dfourright)$ can be described in the same way.  
\subsection{Lower star filters} 
\label{subsection_lowerstar_triangle}
If we consider the restriction of~$\persmap$ to the subspace~$\Low_\SComplex \subseteq \Filt_\SComplex$ of lower star filters, there are only~$2$ barcode strata in the image:~$\StratumD^1_4$ and~$\StratumD_5$. The reason for this is that edges enter the sublevel set filtration of a lower star filter at the same time as one of their vertices, hence there are no bounded intervals in the resulting barcode. The fibers of the restricted persistence map over the two strata can be derived from the general case. Namely, the fiber over~$\StratumD_5$ is the constant filter, while the fiber over~$\StratumD^1_4$ is a hexagon which embeds into the M\"obius strip as the green zig-zag in Fig.~\ref{fig:fiber_triangle_mobius}. 

\bibliographystyle{plain}
\bibliography{reference}
%
%%%%%%%%%%%%%%%%%%%%%%%%%%%%%%%%%%%%%%%%%%%%%%%%

%
\end{document}